\newenvironment{myrules}{\begin{list}{}
{\setlength{\leftmargin}{0.8cm}
 \setlength{\labelwidth}{0.8cm}
 \setlength{\labelsep}{0.2cm}
 \setlength{\parsep}{0.3ex plus 0.2ex minus 0.1 ex}
 \setlength{\itemsep}{0.3ex plus 0.2 ex minus 0ex}
}}{\end{list}}
\numberwithin{equation}{section}
\newtheorem{theorem}{Theorem}[section]
\newtheorem{lemma}[theorem]{Lemma}
\theoremstyle{plain} 
\newtheorem{question}[theorem]{Question}
\newtheorem{proposition}[theorem]{Proposition}
\newtheorem{corollary}[theorem]{Corollary}
\newtheorem{claim}[theorem]{Claim}
\newtheorem*{maintheorem*}{Main Theorem}
\newtheorem*{conjecture*}{Conjecture}
\newtheorem*{theorem*}{Theorem}
\newtheorem*{proposition*}{Proposition}
\newtheorem*{corollary*}{Corollary}
\theoremstyle{definition} 
\newtheorem{definition}[theorem]{Definition}
\theoremstyle{remark}  
\newtheorem{remark}[theorem]{Remark}
\newtheorem{example}[theorem]{Example}
\newtheorem*{remarks*}{Remarks}
\newtheorem*{remark*}{Remark}
\newtheorem*{claim*}{Claim}
\renewcommand{\phi}{\varphi}
\newcommand{\algebra}{\textsc}
\newcommand{\amal}{\textsf{Am}}
\newcommand{\amoeba}{\poset{A}}
\newcommand{\baire}{\omega^\omega}
\newcommand{\cantor}{2^\omega}
\newcommand{\cohen}{\poset{C}}
\newcommand{\bC}{\poset{C}}
\newcommand{\conc}{\smallfrown}
\newcommand{\concat}{{}^\smallfrown} 
\newcommand{\enfa}{\textit}
\newcommand{\ideal}{\mathcal}
\newcommand{\ifif}{\Leftrightarrow}
\newcommand{\fspl}{\mathbb{FSP}}
\newcommand{\force}{\Vdash}
\newcommand{\laver}{\poset{L}}
\newcommand{\meager}{\ideal{M}}
\newcommand{\miller}{\poset{M}}
\newcommand{\model}{\textsc}
\newcommand{\poset}{\mathbb}
\newcommand{\real}{\baire}
\newcommand{\restric}{{\upharpoonright}}
\newcommand{\rest}{{\upharpoonright}}
\newcommand{\sacks}{\poset{S}}
\newcommand{\silver}{\poset{V}}
\newcommand{\mathias}{\poset{MA}}
\newcommand{\spl}{\poset{SP}}
\newcommand{\such}{\: : \:}
\newcommand{\nc}{\newcommand}
\nc{\marginparr}[1]
{\marginpar{\makebox[4mm]{} {#1}}}
\nc{\bP}{\mathbb P}
\nc{\nothing}[1]{}
\nc{\comment}[1]{#1}
\nc{\nco}{\DeclareMathOperator}
\nco{\codes}{code}
\nco{\taille}{taille}
\nco{\ter}{ter}
\nco{\rk}{rk}
\nco{\llower}{lower}
\nco{\order}{o}
\nco{\Nm}{Nm}
\nco{\ppower}{pp}
\nco{\pcf}{pcf} 
\nco{\tcf}{tcf} 
\nco{\tlim}{tlim} 
\nco{\limtext}{lim} 
\nco{\prodt}{{\textstyle \prod}}
\nco{\symdiff}{\triangle}
\nco{\dom}{dom}
\nco{\card}{card}
\nco{\lh}{lh}
\nco{\lt}{lt}
\nco{\lgg}{lg}
\nco{\hgt}{ht}
\nco{\rge}{range}
\nco{\otp}{otp} 
\nco{\trunk}{tr}
\nco{\nex}{next}
\nco{\reduction}{red}
\nco{\supt}{supt}
\nco{\supp}{supp}
\nco{\Lim}{Lim}
\nco{\Leb}{Leb}
\nco{\modd}{mod}
\nco{\invariant}{inv}
\nco{\id}{id}
\nco{\RO}{RO}
\nco{\poss}{pos}
\nco{\Inc}{Inc} 
\nco{\Fn}{Fn}
\nco{\add}{add}
\nco{\borel}{Bor}
\nco{\cof}{cof}
\nco{\cov}{cov}
\nco{\height}{ht}
\nco{\level}{Lev}
\nco{\levy}{Coll}
\nco{\non}{non}
\nco{\ot}{ot}
\nco{\rank}{Rank}
\nco{\splitting}{Split}
\nco{\splitlevel}{ns}
\nco{\stem}{stem}
\nco{\successor}{succ}
\nco{\Succ}{Suc}
\nco{\splsuc}{splsuc}
\nco{\Lev}{Lev}
\nc{\la}{\langle}
\nc{\ra}{\rangle}
\begin{document}
\title[On splitting trees
]{On splitting trees}
\author{Giorgio Laguzzi, Heike Mildenberger, Brendan Stuber-Rousselle} 
\date{January 27, 2020
}

\begin{abstract}
We investigate two variants of splitting tree forcing, their ideals and regularity properties. We prove connections with other well-known notions, such as Lebesgue measurablility, Baire- and Doughnut-property and the Marczewski field. Moreover, we  prove that any \emph{absolute} amoeba forcing for splitting trees necessarily adds a dominating real, providing more support to Spinas' and Hein's conjecture that $\add(\ideal{I}_\spl) \leq \mathfrak{b}$.  
\end{abstract}
\maketitle

\section{Introduction}

Trees and their associated forcing notions have been a crucial ingredient in set theory of the reals, specifically in questions concerning cardinal characteristics and regularity properties. The most popular such forcings are certainly $\sacks$, $\miller$, $\silver$, $\laver$ and $\mathias$ (Sacks, Miller, Silver, Laver and Mathias), but also other notions have played an important role; among them, there is some tradition in studying tree-forcing adding an \emph{$\omega$-splitting real}\footnote{An $\omega$-splitting real is a real $x$ in the forcing extension such that for any set $\{r_n \such n < \omega\}$ in the ground model that contains infinite sets $r_n$, for each $n$ we have that $r_n \cap x $ and $r_n \cap x^c$ are both infinite.}. Spinas \cite{Sp2004} introduced splitting tree forcing $\spl$, which has recently been studied also in \cite{SpHein}. We also investigate another form of splitting-tree forcing (called $\fspl$, Definition \ref{def:fspl}) somehow related to Spinas' one.

A notion of an ideal of \emph{small sets} can be introduced when dealing with any tree-forcing notion, as specified in \ref{def:ideal-meas}. For any such ideal one can associate the common cardinal characteristics, namely the covering, the additivity, the cofinality and the uniformity number.
Furthermore, a notion of measurability (and weak-measurability) generalizing the well-known Lebesgue-measurability and Baire property can be established when dealing with any type of tree-forcing notion (Definition \ref{def:ideal-meas}). It is well-known that in Solovay model any subset of the real line is $\poset{P}$-measurable, for a large variety of tree-forcing notions $\bP$, including $\spl$ and $\fspl$.

In the analysis of the associated additivity numbers a crucial role is played by the so-called amoeba forcings, which are posets adding \emph{generic trees}, see Definition \ref{amoeba}. 
In our paper we address a question raised in \cite{SpHein} by Spinas and Hein related to the additivity number of the ideal $\ideal{I_\spl}$ and the amoeba forcing for $\spl$. 
Even if we do not obtain a complete answer to the conjecture posed by Spinas and Hein, in Section \ref{S3}, Proposition \ref{2.4} and in Remark \ref{remark 3.8} we give more evidence to support such a conjecture, by showing that not only the natural amoeba for $\spl$ adds a dominating real, but that any \emph{aboslute} amoeba necessarily adds a dominating real. 

Section 4 contains a brief digression on Silver forcing, and connections between Silver-amobea and Cohen reals, in line with \cite{Sp15}.

In Section 5, we show some differences between the ideal $\ideal{I}_\fspl$ and the ideal of null sets. 

Our results in section 6 pertain only to the fat splitting forcing.
We show that for any $f$-slalom in the ground model there is an $\fspl$-name
that evades it. This is a strong negation of the Sacks property.

We prove in section 7 that actually the weak form of $\spl$- and $\fspl$-measurability for all sets of reals can be reached in a much simpler model, namely the $L(\mathbb{R})$ of the forcing extension obtained by a countable support iteration of Cohen forcing.
We conclude with an application of Shelah's amalgamation of forcing method
to fat splitting trees.
Using evasion for slaloms of width $n \mapsto 2^{kn}$, $k \geq 1$ we separate the regularity properties of $\fspl$ from others.

In the remainder of this introduction, we set up our notation and end with
one property of fat splitting.

\begin{definition}\label{1.1}
  \hfill
 \begin{myrules}
 \item[(a)] Let $X$ be a non-empty set. We let $X^{<\omega}=\{s\such (\exists n \in \omega)(s \colon n \to X)\}$.
The set $X^{<\omega}$ is partially ordered by the initial segment relation $\trianglelefteq$, namely  $s\trianglelefteq t$ if $s = t \restric \dom(s)$. We use  $\triangleleft$ for the strict relation.
For $s \in X^{<\omega}$ we let $\dom(s) = |s|$ be its domain.

\item[(b)]
A set $p \subseteq 2^{<\omega}$ is called a \emph{tree} if it is closed under initial
  segments, i.e. $t \in p \wedge s \trianglelefteq t \rightarrow s \in p$.
The elements of $p$ are called \emph{nodes}.
\item[(c)] For a node $s \in p$ we let $\Succ_p(s) = \{t \in p \such
  (s \triangleleft t \wedge |t| = |s| +1)\}$ be the set of immediate successor nodes.
  A node is called a \emph{splitting node} if it has two immediate successors in $p$.
  \item[(d)]  A tree p is called \emph{perfect}
  if for every $s \in p$ there is a splitting node $t \trianglerighteq s$.
\item[(e)] We write  $\splitting(p)$ for the set of splitting nodes of $p$.
\item[(f)] For $t \in p$, we write $\splsuc(t)$ for the shortest splitting node extending $t$.
  When $t$ is splitting, then $\splsuc(t)=t$.
\item[(g)] The \emph{stem of $p$}, short $\stem(p)$, is the $\trianglelefteq$-least splitting node of $p$.
\item[(h)] We define the splitting degree $\ot^p \colon \splitting(p) \rightarrow \omega$ recursively as follows:
\begin{myrules}
\item[-] $\ot^p(\stem(p))=0$
\item[-] for every $i \in \{ 0,1 \}$ and $t \in \splitting(p)$ with $\ot^p(t)=n$, put \\
  $\ot^p(\splsuc(t^\conc i))= n+1$.
\end{myrules}
\item[(i)]  For $n\in \omega $ let $\splitting_n(p)=\{t\in \splitting(p) \such \ot^p(t)=n \}$ and $\splitting_{\leq n}(p)=\{t\in \splitting(p) \such \ot^p(t)\leq n \}$.
\item[(j)] For $n \in \omega$, let $\Lev_n(p) := \{ t \in p\such |t| = n \}$.
\item[(k)] For $t \in p$ we let $p \restric t = \{s \in p \such s\trianglelefteq t \vee t \triangleleft s\}$.  
\item[(l)] For $F \subseteq p$ we let $p \restric F = \{s \in p \such
  (\exists t \in F)(s\trianglelefteq t \vee t \triangleleft s)\}$. 
\item[(m)] For $F \subseteq p$ we let $\ter(F) = \{s \in F \such
  (\neg\exists t \in F)(s\triangleleft t)\}$.  This is the set of terminal nodes of $F$.
\item[(n)] For each perfect tree $p \subseteq 2^{<\omega}$ we have
  a canonical splitting and lexicographical order preserving homomorphism
  \[h \colon \splitting(p)  \to 2^{<\omega}\]
  that is defined by induction on $n$ for arguments in $\splitting_n(p)$ as follows
  \begin{equation*}\begin{split}
      h(\stem(p)) = & \emptyset
      \\
      h(\splsuc_p(t ^\conc i)) = & h(t) ^\conc i \mbox{ for } t \in \splitting(p), i\in 2.
    \end{split}
  \end{equation*}
\item[(o)] We let $\bar{H}$ denote $h^{-1}$ and we let its lifting
    $H \colon 2^\omega \to [p]$ be defined by $H(f) = \bigcup\{ \bar{H}(f\restric n)\such n < \omega \}$.
\item[(p)] The body or rump of a tree $p \subseteq 2^{<\omega}$, short $[p]$, is
  the set $\{f \in 2^\omega \such (\forall n)( f \restric n \in p)\}$.

 \end{myrules}
\end{definition}

Spinas \cite{Sp2004} introduced splitting trees in order to analyse analytic splitting families.

\begin{definition} \label{def:spl}
  A tree $p \subseteq 2^{<\omega}$ is called \emph{splitting tree}, short $p \in \spl$, if for every $t \in p$ there is $k \in \omega$ such that for every $n \geq k$ and every $i \in \{0,1 \}$ there is $t' \in p$,
  $t \trianglelefteq t'$ such that $t'(n)=i$.
  We denote the smallest such $k$ by $K_p(t)$. The set $\spl$ is partially ordered by $q \leq_{\spl} p$ iff $q \subseteq p$.
\end{definition}

Of course, every splitting tree is perfect.

\nothing{
Spinas gave an alternative characterisation of a splitting tree:

\begin{definition}\label{seqspl}
  We let $\Fn(\omega,2)$ be the set of functions from a finite (not necessarily initial) subset of $\omega$ to $2$. We use Latin letters for elements of
  $2^{<\omega}$ and Greek letters for elements of $\Fn(\omega,2)$.
For $\sigma \in \Fn(\omega,2)$ we let $\stem(\sigma) = \sigma \restric \min(\omega \setminus \dom(\sigma))$.
  
For a tree $T \subseteq 2^{<\omega}$, we let $\bar{T} = \{\sigma \in \Fn(\omega,2) \such \exists t \in T \such \sigma \subseteq t\}$.
  
Given $\tau \in \bar{T}$ the pair $(\tau_0,\tau_1) \in \bar{T} \times \bar{T}$ is called \emph{a splitting pair in $T$  above $\tau$} if
\begin{myrules}
\item[(a)] $\tau_0,\tau_1 \supseteq \tau$,
\item[(b)] $\stem(\tau) \in \dom(\tau_0) \cap \dom(\tau_1)$ and 
\item[(c)] $  (\forall j \in \dom(\tau_0) \cap \dom(\tau_1) \setminus \dom(\tau))( \tau_0(j) \neq \tau_1(j))$.
\end{myrules}
A perfect tree $p \subseteq 2^{<\omega}$ is called a \emph{sequentially splitting tree} if $p$
has a splitting generating sequence, i.e., if there are
$\la \phi_s \such s \in 2^{<\omega}\ra$ such that
for
every $s \in 2^{<\omega}$, $(\phi_{s^\conc 0}, \phi_{s^\conc 1})$ is a splitting pair above $\phi_s$,
and $p$ 
is the closure of $\{ \stem(\phi_s) \such  s \in 2^{<\omega} \}$ under initial segments.
\end{definition}

\begin{theorem}\cite{Sp2004} Every splitting tree $p$ is sequentially splitting.
\end{theorem}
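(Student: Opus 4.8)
We build the splitting generating sequence $\la\phi_s\such s\in 2^{<\omega}\ra$ by recursion on $|s|$, carrying along witnesses $u_s\in p$ with $\phi_s\subseteq u_s$, and we arrange that $\{\stem(\phi_s)\such s\in 2^{<\omega}\}$ is $\trianglelefteq$-cofinal in $p$. Since each $\stem(\phi_s)$ is an initial segment of the node $u_s\in p$, it lies in $p$; so the initial‑segment closure of $\{\stem(\phi_s)\}$ is contained in $p$, and cofinality gives the reverse inclusion, which is exactly the last clause of Definition \ref{seqspl}. Two facts about a splitting tree $p$ drive everything: (i) if $\sigma\in\bar p$ and $t\in p$ with $\sigma\subseteq t$, then by Definition \ref{def:spl} every coordinate $n\geq K_p(t)$ is \emph{two‑sided over $t$} (both values are realized by extensions of $t$ in $p$), hence also over $\sigma$, so only finitely many coordinates fail to be two‑sided over $\sigma$; and (ii) $p$ is perfect, so every node of $p$ lies below nodes of $p$ of arbitrary length.

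\emph{The recursion step.} Suppose $\phi_s=\tau$ with witness $u=u_s$, and suppose the bookkeeping (below) hands us a node $v\in p$ with $\tau\subseteq v$ (in the default case $v:=u$). Pick $n^\ast\geq\max\{|v|,\ K_p(v),\ \max(\dom\tau)+1\}$ and, using (i) over $v$, pick $w_0,w_1\in p$ with $v\trianglelefteq w_j$ and $w_j(n^\ast)=j$. Set
\[
 \phi_{s\conc 0}:=w_0\restric(n^\ast+1),\qquad \phi_{s\conc 1}:=\tau\cup\{(n^\ast,1)\},\qquad u_{s\conc j}:=w_j .
\]
Both $\phi_{s\conc j}$ extend $\tau$, lie in $\bar p$ (via $w_j$), and contain $\phi_{s\conc j}$'s witness; moreover $v\trianglelefteq\phi_{s\conc 0}$, which is solid, so $\stem(\phi_{s\conc 0})=\phi_{s\conc 0}\trianglerighteq v$. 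The pair is a splitting pair over $\tau$: $\dom\phi_{s\conc 0}\cap\dom\phi_{s\conc 1}=\dom\tau\cup\{n^\ast\}$, so the only common coordinate outside $\dom\tau$ is $n^\ast$, where $\phi_{s\conc 0}(n^\ast)=0\neq1=\phi_{s\conc 1}(n^\ast)$, and $n^\ast>\max(\dom\tau)\geq|\stem(\tau)|$. The device here is that the ``passive'' side $\phi_{s\conc 1}$ is kept minimal, which makes clause (c) of ``splitting pair'' automatic no matter how far we solidify the ``active'' side $\phi_{s\conc 0}$; in particular one never has to split at a forced coordinate, and one is free to bury the finitely many forced coordinates (fact (i)) inside $\phi_{s\conc 0}$.

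\emph{Cofinality of the stems.} Start with $\phi_\emptyset=u_\emptyset:=\stem(p)$ and fix an enumeration $p=\{t_\ell\such\ell<\omega\}$. Any $t_\ell\trianglelefteq\stem(p)$ is already captured by $\phi_\emptyset$. For the rest one uses a standard priority bookkeeping: assign to each $s$ a request $\ell(s)$ so that every $\ell$ is requested cofinally below every node of $2^{<\omega}$; at stage $s$, if $\ell(s)$ is unserved and $u_s\triangleleft t_{\ell(s)}$, run the step with $v:=t_{\ell(s)}$ and mark $\ell(s)$ served — then $\stem(\phi_{s\conc 0})\trianglerighteq t_{\ell(s)}$, so $t_{\ell(s)}$ is captured; otherwise run the step with $v:=u_s$. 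Since $\stem(p)=u_\emptyset\triangleleft t_\ell$ for every not‑yet‑captured $t_\ell$, and since whenever a step at a node $s'$ with $u_{s'}\triangleleft t_\ell$ serves a comparable request one aims it at the $\trianglelefteq$‑longer of the two nodes (so $u_{s'\conc 0}$, equivalently the captured node, still sits appropriately relative to $t_\ell$), a given $t_\ell$ can be ``pushed past'' only finitely often before the priority list reaches it and it is served. Hence every $t_\ell$ is captured and $\{\stem(\phi_s)\}$ is $\trianglelefteq$‑cofinal in $p$, completing the proof.

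The verification that each step produces a genuine splitting pair is the routine computation just indicated; the real obstacle is the last bookkeeping, since the ``passive'' branches of the $\phi$‑tree never lengthen their stems, so one must steer the active side, node by node, along an enumeration of $p$ in order to still reach every node of $p$.
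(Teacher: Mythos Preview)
There are two genuine gaps.

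\textbf{Clause (b) of ``splitting pair'' fails on the passive side.} Clause (b) requires $|\stem(\tau)|\in\dom(\tau_0)\cap\dom(\tau_1)$; it is what forces $\stem(\tau_j)$ to properly extend $\stem(\tau)$. Your passive child is $\phi_{s\conc 1}=\tau\cup\{(n^\ast,1)\}$, so $\dom(\phi_{s\conc 1})=\dom(\tau)\cup\{n^\ast\}$. Since $|\stem(\tau)|=\min(\omega\setminus\dom\tau)\notin\dom\tau$, clause (b) would force $n^\ast=|\stem(\tau)|$; but you take $n^\ast\geq K_p(v)$, and already at the first step ($\tau=v=\stem(p)$) this gives $n^\ast\geq K_p(\stem(p))$, typically strictly larger than $|\stem(p)|=|\stem(\tau)|$. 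The inequality you record, $\max(\dom\tau)\geq|\stem(\tau)|$, is in fact false whenever $\tau$ is solid (so at the base and after every active step) and in any case does not address (b). Your own observation that the passive stems never lengthen is precisely the symptom of (b) failing, not a side issue to be repaired by bookkeeping.

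\textbf{The cofinality bookkeeping cannot succeed as written.} Both new witnesses $u_{s\conc 0}=w_0$ and $u_{s\conc 1}=w_1$ are chosen $\trianglerighteq v$. Hence the moment you serve a request with $v=t_{\ell(s)}$, every witness $u_{s'}$ for $s'\trianglerighteq s$ lies in $p\rest t_{\ell(s)}$. Any node $t\in p$ incomparable with $t_{\ell(s)}$ then has $u_{s'}\not\triangleleft t$ for all $s'\trianglerighteq s$ and is never served below $s$; if the first serving stage is $s=\emptyset$, such $t$ is never captured at all. Your priority remark handles only requests $\trianglelefteq$-comparable with the one being served, not incomparable ones.

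For comparison, the paper's proof avoids both problems by a canonical symmetric choice: put $\dom(\phi_s)=\taille(\stem(\phi_s)):=\{n\such(\exists i)(\forall r\in p\rest\stem(\phi_s))(r(n)=i)\}$ with the forced values, and let $\phi_{s\conc i}$ record the forced coordinates above $\stem(\phi_s)^\conc i$. Then $\stem(\phi_{s\conc i})=\splsuc_p(\stem(\phi_s)^\conc i)$, so the stems enumerate exactly $\splitting(p)$ and cofinality in $p$ is automatic; clause (b) holds since $|\stem(\phi_s)|\in\taille(\stem(\phi_s)^\conc i)$, and clause (c) holds because any coordinate forced above both immediate successors but not above $\stem(\phi_s)$ must be forced to opposite values on the two sides.
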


\begin{proof} For completeness and understanding, we write a proof.
  
\begin{definition} For $t \in p \in \spl$ we let
  \[\taille(t) =
  \{n \in \omega \such (\exists i_n\in 2) (\forall s \in p \rest t)( s(n)= i_n)\}.\]
\end{definition}

Since $p \in \spl$, for any $t \in p$ the set $\taille(t)$ is finite and
\[K_p(t) = \max(\taille(t)) +1.\]

By induction on $n$ we define for $s \in 2^n$ a function $\phi_s \in \bar{ p}$ such that $(\phi_{s \concat 0}, \phi_{s \concat 1})$ is a splitting pair above $\phi_s$ and in addition
\begin{equation}
  \label{ast}
  \phi_s = \bigl\{(m,i_m) \such (m \in \taille(\stem(\phi_s)) \wedge
  (\exists t \in p \rest \stem(\phi_s))(t(m)=i_m))\bigr\}.
\end{equation} 
The quantifier $(\exists t \in p \rest \stem(\phi_s))$ can equivalently be replaced by $(\forall t \in p \rest \stem(\phi_s))$.
We let 
\begin{equation*}\begin{split}
\phi_\emptyset = \{(m,i_m) \such (m \in \taille(\stem(p)))
\wedge
(\exists t \in p)(t(m) = i_m)\}.
  \end{split}
\end{equation*}
By definition of $\stem(p)$, we have $ \stem(\phi_\emptyset) = \stem(p)$.

Now suppose that $\phi_s$, $s \in 2^{\leq n}$ are such that for $ t \in 2^{<n}$,
$(\varphi_{t ^\conc 0},\varphi_{t ^\conc 1})$ is a splitting pair in $p$ above $\varphi_t$.
and such that Equation \eqref{ast} holds for $\varphi_{t^\conc i}$.
Then 
for $s \in 2^n$, we have that $\stem(\phi_s)$ is splitting in $p$ and therefore $|\stem(\phi_s)|  \not\in \taille(\phi_s)$.
We let
\begin{equation*}\begin{split}
\phi_{s^\conc i} =  \bigl\{(m,i_m) \such & (m \in \taille(\stem(\phi_s)^\conc i)) \wedge (\exists t \in p \restric (\stem(\phi_{s}) ^\conc i))(t(m) = i_m)\bigr\}.
\end{split}\end{equation*}
Then for $i\in 2$ we have
\begin{eqnarray*}
  \stem(\phi_{s^\conc i})) = \splsuc(\stem(\phi_s) \concat i)
  \end{eqnarray*}
Hence $\phi_{s\concat i}$ fulfills Equation~\eqref{ast} and by the properties of $\taille(\stem(\phi_s))$, the pair 
$(\phi_{s^\conc 0}, \phi_{s^\conc 1})$ is a splitting pair in $p$ above $\phi_s$.

Since any $t \in \Lev_n(p)$ is an initial segment of
$\stem(\phi_s)$ for some $s\in 2^{\leq n}$, we have that
the downward closure of $\{\stem(\phi_s) \such s \in 2^{<\omega}\}$
is $p$.
\end{proof}
}

Now we introduce a relative of splitting tree forcing that  has stronger
splitting properties. We do not know whether strictly stronger.

\begin{definition} \label{def:fspl}
A perfect tree $p \subseteq 2^{<\omega}$ is called \emph{a fat splitting tree} ($p \in \fspl$) iff for every $t \in p$ there is $k \in \omega$ such that for every $n\geq k-1$ there is $t'\in \Lev_{n}(p\restric t)$ such that $t'\in \splitting(p)$. We denote the smallest such $k$ by $K_p(t)$. Again subtrees are stronger, i.e., smaller,  conditions.
\end{definition}

\begin{definition} Let $X, S \in [\omega]^\omega$. We say \emph{$S$ splits $X$}
  if $S \cap X$ and $X \setminus S$ are both infinite.
  \end{definition}

\begin{proposition} The forcing $\fspl$ adds a generic real
  \[x_G = \bigcup\{\stem(p) \such p \in G\}\]
  such that for any infinite set $\{\{n_i, n_i+1\} \such i < \omega \}$
  in the ground model there are infinitely many $i$
  such that \[|x_G \cap \{n_i,n_i+1\}| = 1.\]
\end{proposition}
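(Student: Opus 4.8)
The plan is to show that the generic real $x_G = \bigcup\{\stem(p) \such p \in G\}$ is well-defined and total, and then to run a genericity (density) argument: for each infinite set $B = \{\{n_i, n_i+1\} \such i < \omega\}$ in the ground model and each $j < \omega$, the set of conditions $p$ that already force $|x_G \cap \{n_i, n_i+1\}| = 1$ for some $i \geq j$ should be dense below any given condition. First I would note that $x_G \in 2^\omega$: the stems of conditions in $G$ form a $\trianglelefteq$-chain (any two conditions in $G$ have a common extension, whose stem extends both, since $q \leq p$ implies $\stem(q) \trianglerighteq \stem(p)$), and by genericity the lengths of these stems are unbounded, because for any $p \in \fspl$ and any $m$ there is $q \leq p$ with $|\stem(q)| \geq m$ — just pass to $p \restric t$ for a splitting node $t \in p$ of length $\geq m$, which exists since $p$ is perfect, in fact fat splitting.

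The core is the density argument. Fix $p \in \fspl$, an infinite set $B = \{\{n_i, n_i+1\} \such i < \omega\}$ of consecutive pairs (I would handle disjointness of the pairs, or just note we only need infinitely many of them, so we may assume the $n_i$ are increasing and the pairs pairwise disjoint after thinning), and $j < \omega$. Let $t = \stem(p)$ and let $k = K_p(t)$ from Definition \ref{def:fspl}. Choose $i \geq j$ large enough that $n_i \geq k - 1$ and $n_i > |t|$. By the fat splitting property applied to $t$, there is a splitting node $s \in \Lev_{n_i}(p \restric t) \cap \splitting(p)$. Since $s$ splits in $p$, both $s^\conc 0$ and $s^\conc 1$ lie in $p$; pick the one, call it $s^\conc e$, with $e \neq s(n_i - 1)$ if $n_i \geq 1$ — wait, more carefully: I want the two coordinates $n_i$ and $n_i+1$ of the eventual branch to disagree. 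The node $s$ has length $n_i$, so $s$ already determines coordinates $0, \dots, n_i - 1$ but not $n_i$; its two successors in $p$ fix coordinate $n_i$ to $0$ and to $1$ respectively. So I pick a successor $s' = s^\conc e \in p$ (either value of $e$ works), and then I need a further extension $s'' \trianglerighteq s'$ in $p$ with $s''(n_i + 1) = 1 - e$, so that $|\{s''(n_i), s''(n_i+1)\}| = $ ... no: I want $|x_G \cap \{n_i, n_i+1\}| = 1$, i.e. exactly one of coordinates $n_i, n_i+1$ of the branch is $1$. That means I want $x_G(n_i) \neq x_G(n_i + 1)$. So having fixed $x_G(n_i) = e$ via $s' = s^\conc e$, I need to extend inside $p$ to a node $s''$ of length $\geq n_i + 2$ with $s''(n_i + 1) = 1 - e$. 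Since $p \restric s'$ is again a fat splitting tree (Definition \ref{def:fspl} is closed under $p \mapsto p \restric u$ for $u \in p$) and in particular splitting, the coordinate $n_i + 1$ is not eventually constant on $p \restric s'$ — more directly, using the splitting/fat-splitting structure there is a node $u \in p \restric s'$ with $u(n_i+1) = 1 - e$. Then $q := p \restric u \leq p$ is a condition whose stem extends $u$, hence $q \force \check{x}_G(n_i) = e \wedge \check{x}_G(n_i+1) = 1-e$, so $q \force |\check{x}_G \cap \{n_i, n_i+1\}| = 1$ with $i \geq j$. This shows density, and a standard genericity argument over all $j$ finishes the proof.

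The main obstacle — really the only subtle point — is arguing that inside $p \restric s'$ one can hit the value $1 - e$ at coordinate $n_i + 1$. Two routes: (i) invoke that every fat splitting (hence splitting) tree $p$ has, for each node $t$, the property from Definition \ref{def:spl} that for all large $n$ and each $i \in 2$ some extension of $t$ takes value $i$ at $n$ — this applies directly to $t = s'$ once $n_i + 1$ is large enough, which we can arrange by choosing $i$ even larger, absorbing $K_{p \restric s'}(s')$; (ii) more carefully, pick the splitting node $s \in \Lev_{n_i}(p\restric t)$ and then its two successors $s^\conc 0, s^\conc 1$ in $p$ already realize both values $0,1$ at coordinate $n_i$, and then separately find a splitting node or branching at coordinate $n_i + 1$ below the chosen $s^\conc e$. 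Either way one should be slightly careful that the two requirements (value at $n_i$, value at $n_i+1$) are imposed in the right order and are simultaneously satisfiable inside $p$; since $\fspl$ conditions are closed under restriction to a node, this is routine once the large-$n$ splitting property is in hand. I would present route (i) for brevity, noting that $\fspl \subseteq \spl$ is immediate from the definitions, so Definition \ref{def:spl}'s defining property is available.
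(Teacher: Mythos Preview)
Your overall density strategy is correct and matches the paper's, but there is a real gap in the execution stemming from an off-by-one choice. You locate a splitting node $s$ at level $n_i$, fix a successor $s' = s^\conc e$ of length $n_i+1$, and only afterwards try to control the value at coordinate $n_i+1$ inside $p\restric s'$. Your route (i) invokes the $\spl$-property for the node $s'$, which requires $n_i+1 \geq K_p(s')$; but $K_p(s')$ depends on $s'$, which in turn depends on the $i$ you already chose, so ``choosing $i$ even larger, absorbing $K_{p\restric s'}(s')$'' is circular. Your route (ii) effectively asks for branching at coordinate $n_i+1$ below $s^\conc e$, i.e., that $s'$ itself be a splitting node --- and nothing guarantees this. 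Indeed it can happen that every extension of $s^\conc 0$ in $p$ has value $0$ at coordinate $n_i+1$ while every extension of $s^\conc 1$ has value $1$ there; fat splitting promises a splitting node somewhere on level $n_i+1$, not that your particular $s^\conc e$ has both successors.

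The paper avoids all of this by a one-level shift: use fat splitting to pick the splitting node $s$ directly at level $n_i+1$ (this needs only $n_i \geq K_p(\stem(p))-1$). Then $s(n_i)$ is already determined by $s$, and since $s$ splits, both $s^\conc 0$ and $s^\conc 1$ lie in $p$; set $q = p\restric\bigl(s^\conc(1-s(n_i))\bigr)$. This forces $x_G(n_i)=s(n_i)$ and $x_G(n_i+1)=1-s(n_i)$ in one stroke, with no second step and no appeal to the $\spl$-property of a deeper node.
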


\begin{proof} We prove only the latter part. Let $p \in \fspl$ and $\{\{n_i, n_i+1\} \such i < \omega \}$
  and $k \in \omega$ be given. After possibly strengthening $p$ we can assume  $|\stem(p)| > k$. We take $i$ such that $n_i \geq K_p(\stem(p))-1\geq k$.
  Then there is $s \in \splitting(p)$ such that $|s| = n_i+1$.
  Assume that $s(n_i) = 0$. There is $s^\conc 1  \in \Succ_p(s) $.
  We let $q = p \rest s ^\conc 1$. The other case is symmetric.
  Since $k$ was arbitrary, we have  $p \Vdash (\exists^\infty i)
  (|x_G \cap \{n_i,n_i+1\}| = 1)$, as claimed.
\end{proof}

We do not know whether $\spl$ has the same property.

\begin{remark}
  \begin{myrules}
    \item [(1)]
Any fat splitting tree is a splitting tree, and the function $K_p$ in
the sense of the fat splitting
is an upper bound to a function witnessing splitting.
We use the same function symbol $K_p$ for the forcing orders $\spl$ and $\fspl$, although the interpretation of the symbol depends on the underlying forcing order.
The technical treatment of the $K_p$ is the same in both interpretations.

\item[(2)]
For $\bP \in \{\spl, \fspl\}$, $K_p$ in the respective meaning, $K_p(s) \leq K_p(t)$ for $s \trianglelefteq t$.
\item[(3)]
For $\bP \in \{\spl,\fspl\}$ we have that $K_p(t)= K_p(\splsuc(t))$ for every condition $p\in\bP$ and node $t\in p$.
  \end{myrules}
\end{remark}

\section{Axiom A and dividing a condition into two}

 We  provide some basic properties of $\fspl$ whose analogues for $\spl$ were
  proved by Hein and Spinas \cite{SpHein}.
 In the beginning of the section we show that $\fspl$ has strong Axiom A and that we can arrange lower bounds for the
  $K_p$ values.
We build on Spinas' and Hein's techniques for splitting trees and
  develop them further, both for $\spl$ and for $\fspl$.

\begin{definition}\label{Axiom_A}
  A notion of forcing $(\bP,\leq)$ has \emph{Axiom A} if there are partial order relations $\la \leq_n \such n < \omega \ra$ such that
  \begin{myrules}
  \item[(a)] $q \leq_{n+1} p$ implies $q\leq_n p$ , $q\leq_0 p$ implies
    $q \leq p$,
  \item[(b)] If $\la p_n \such n < \omega \ra$ is a fusion sequence, i.e.,  a sequence such that
    for any $n$, $p_{n+1} \leq_n p_n$, then there is a lower bound $p \in \bP$, $p \leq_n p_n$. 
  \item[(c)] For any maximal antichain $A$ in $\bP$ and and $n\in \omega$ and any $p \in \bP$ there is $q \leq_n p$ such that only countably many elements of $A$ are compatible with $q$. Equivalently, for any open dense set $D$ and any $n$, $p$, there is a countable set $E_{p}$ of conditions in $D$ and $q\leq_n p$ such that $E_{p}$ is predense below $q$.

    A notion of forcing $(\bP,\leq)$ has \emph{strong Axiom A} if the set of compatible elements in (c) is even finite. 
 \end{myrules}
\end{definition}

Axiom A entails properness and
strong Axiom A implies ${}^\omega \omega$-bounding (see, e.g., \cite[Theorem 2.1.4,
  Cor~2.1.12]{RoSh:470}).

  \begin{definition}\label{leqn}
    For $\bP = \fspl$,  we define a decreasing sequence of partial orderings $\langle \leq_n \such n\in \omega \rangle$ by $q \leq_n p$ if
\[q \leq p \land
(\splitting_{\leq n}(p) = \splitting_{\leq n}(q) \land (\forall t \in \splitting_{\leq n}(p)) (K_q(t)=K_p(t)). 
\]
\end{definition}

  \nothing{ Remark
    If $K_0 = 0$ and $K_{n+1} = \max\{K_p(t) \such t \in \Lev_{\leq K_n}(p)\} +1$, then  $(\Lev_{\leq n}(p) = \Lev_{\leq n}(q)$ and $(\forall t \in \Lev_{\leq K_j}(p)) K_p(t) = K_q(t)$ implies that $\splitting_j(p) = \splitting_j(q)$.
  }

  Spinas and Hein \cite[Lemma 3.9]{SpHein} introduce a countable
  notion of forcing:

\begin{definition}\label{bP_p}
Let $\bP  \in \{ \spl, \fspl\}$ and let $p \in \bP$. We define $\bP_p$:
  Conditions in $\bP_p$ are finite trees $F \subseteq p$ such that there is $g_F \in \omega$ such that $\ter(F) \subseteq 2^{g_F}$.

  We let $F' \leq_{\bP_p} F$ if
  \begin{myrules}
  \item[(a)]
    $F' \supseteq F$ and
  \item[(b)]
    $\forall s \in F$, $K_{p \rest F'}(s) =
    K_{p \rest F}(s)$.
    \end{myrules}
\end{definition}

By Lemma \ref{triple}, the forcing $\bP$ is atomless.
Hence also $\bP_p$ is atomless and equivalent to Cohen forcing.
The union of a $\bP_p$-generic condition is a condition $p_G$ in $\bP$ again.
\nothing{Not only from $p_G$ as a whole but also of each branch of $p_G$ one
can read off $G$ after a certain preparation. The following lemma
is a step towards a proof of the latter statement.
\begin{lemma}\label{embed}
  Let $p \in \fspl$. Then there is $q \leq_0 p$ and there is a  $\trianglelefteq$-preserving $h_q \colon q \to 2^{<\omega}$ such that
  for any $s \in 2^{<\omega}$, $q \rest h_q^{-1}[\{s\}] \leq_{0} p$ for any $s, t \in 2^{<\omega}$
  \begin{equation}\label{embed_eins}
  s \perp t \leftrightarrow q \rest h_q^{-1}[\{s\}] \perp q \rest h_q^{-1}[\{t\}]
    \end{equation}
  and 
  \begin{equation}\label{embed_zwei}
    s \trianglelefteq t \leftrightarrow q \rest h_q^{-1}[\{s\}] \geq_{|s|} q \rest h_q^{-1}[\{t\}]
  \end{equation}
$h$ is far from being injective. 
   Suppose now we extend the universe by Cohen forcing. Then any Cohen-generic real $c$ can be read off as
   from the fusion sequence $(q \rest h^{-1}(c\rest n))_n$, and each branch $b$ of
   the fusion limit of the latter gives $\bigcup_n h(b \rest n) = c$. 
\end{lemma}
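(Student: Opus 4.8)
The plan is to obtain $q$ as the limit of a fusion sequence for the strong Axiom~A orderings $\le_{n}$ of Definition~\ref{leqn}, defining $h_q$ alongside it. The picture is this: inside $q$ I single out, at each ``coding level'' $n$, one finite antichain $A_{s}$ of splitting nodes for every $s\in 2^{n}$, arranged into a tree-like system indexed by $2^{<\omega}$ — so that every branch of $q$ meets $\bigcup_{|s|=n}A_{s}$ in exactly one node, the coding nodes met along a branch are $\trianglelefteq$-increasing, and the one in $A_{s}$ lies above those in $A_{s\rest m}$ for $m<n$ — while between consecutive coding antichains I keep many further, \emph{non-coding} splitting nodes. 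Then $h_q(v)$ is the finite sequence of directions that $v$ takes at the coding antichains strictly below it. The non-coding splitting nodes are exactly what makes $h_q$ far from injective, and keeping enough of them is what makes each ``piece'' $q\rest h_q^{-1}[\{s\}]$ — which will be $q$ pruned to the branches whose $h_q$-value extends $s$ — a genuine \emph{fat} splitting tree, not merely a splitting tree.

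I would run the recursion from $q_{0}:=p$, arranging that $\stem(p)$ lies strictly below the level-$0$ coding antichain and carrying as invariant that, for every $s$ treated so far, the piece of $q_{n}$ given by the branches whose decoding extends $s$ is a fat splitting tree with stem $\stem(p)$ and $K$-value $K_{p}(\stem(p))$ there. To pass from $q_{n}$ to $q_{n+1}$, for each $s\in 2^{n}$ and $i\in 2$ one works above $A_{s}$ in direction $i$ and shrinks there \emph{without} disturbing $\splitting_{\le n}(q_{n})$ or the $K$-values on it — so the step is a legitimate $\le_{n}$-step — installing the new antichain $A_{s^\conc i}$ together with a reserved quota of non-coding splitting nodes below it and re-establishing the invariant. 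The finitely many local steps together give $q_{n+1}\le_{n}q_{n}$. By strong Axiom~A the fusion sequence has a lower bound $q$, the $A_{s}$ stabilise, $h_q$ is defined as above, and then $h_q(\stem(q))=\emptyset$ and $h_q$ is $\trianglelefteq$-preserving.

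It remains to verify the stated properties. Each $h_q^{-1}[\{s\}]$ is the ``slab'' of $q$ lying between the level-$(|s|-1)$ and the level-$|s|$ coding antichains in the $s$-direction, so $q\rest h_q^{-1}[\{s\}]$ is $q$ pruned to the branches whose decoding extends $s$; since that pruning leaves $q$ untouched strictly below the level-$0$ antichain and the slabs were kept fat, this is a condition in $\fspl$ below $p$ in the asserted $\le_{0}p$-position. For the two biconditionals: if $s\perp t$, no branch of $q$ decodes to a common extension, so the pieces have no common infinite branch and hence no common lower bound, i.e.\ are incompatible; for comparable $s,t$ the smaller piece is a common extension. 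And $q\rest h_q^{-1}[\{t\}]\le_{|s|}q\rest h_q^{-1}[\{s\}]$ holds exactly when the coding antichains at which the smaller piece further prunes the larger all sit above $\splitting_{\le|s|}$ of the larger, which — because below each coding antichain we reserved more splitting nodes than there are coding antichains below it — is precisely the case $s\trianglelefteq t$; the converse direction uses the first biconditional to reduce to comparable $s,t$, and then $t\triangleleft s$ is ruled out, as it would force the two pieces equal. That $h_q$ is far from injective is immediate, since each slab has more than one node. Finally, forcing with Cohen forcing and taking a generic $c\in 2^{\omega}$, the pieces $q\rest h_q^{-1}[\{c\rest n\}]$ form a $\subseteq$-decreasing chain whose intersection is $q$ cut down to the single decoding class of $c$; each branch $b$ of that intersection meets the $n$-th coding antichain at a node of direction $c(n)$, so $c=\bigcup_{n}h_q(b\rest\ell_{n})$ for suitable levels $\ell_{n}$, and genericity of $c$ is used only to guarantee that $\la c\rest n\such n<\omega\ra$ walks through all the coding antichains.

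The step I expect to be the main obstacle is exactly this simultaneous bookkeeping: a $\le_{n}$-step may not disturb $\splitting_{\le n}$ or the $K$-values on it, yet it must still install an entire new coding antichain and enough non-coding splitting nodes so that \emph{every} piece $q\rest h_q^{-1}[\{s\}]$ — not only $q$ — comes out fat splitting with stem $\stem(p)$ and the correct $K$-value there, and so that the $\le_{|s|}$-relationships between pieces come out exactly as stated. Making these requirements mutually compatible is where the fat splitting version genuinely needs more work than the ordinary splitting version treated by Spinas and Hein~\cite{SpHein}; once the construction is in place, reading the Cohen real off is essentially formal.
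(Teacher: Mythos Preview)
Your proposal is correct and follows essentially the same route as the paper: build a fusion sequence $q_{n+1}\leq_n q_n$ starting from $q_0=p$, at each stage invoking the splitting provided by Lemma~\ref{triple} to bifurcate every current piece into two incompatible $\leq_n$-extensions, and define $h_q$ alongside so that $h_q^{-1}[\{s\}]$ records the piece indexed by $s\in 2^{<\omega}$. The paper's write-up is terser---it simply lists the inductive requirements $q_{n+1}\leq_n q_n$, $q_{n+1}\restric k_n = q_n$, $h_{n+1}\supseteq h_n$ and says ``according to Lemma~\ref{triple} there is such a sequence''---whereas you spell out the coding-antichain picture and the bookkeeping that guarantees each slab stays fat with the right stem and $K$-value; but the underlying construction and the reliance on Lemma~\ref{triple} for the $\leq_n$-compatible bifurcation are the same.
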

}
Our
Lemma~\ref{triple}
is based on Spinas and Hein's proof of \cite[Lemma 3.9]{SpHein}.

\begin{lemma}\label{triple} 
  Let $k \in \omega$,  $p \in \fspl$, $m \in \omega$, $D$ open dense in $\fspl$.
  Then for  $t \in \splitting_k(p)$, $j = 0,1$ there is $p_{t,j}$ and a finite set $E_{t,j}$ with the following properties:
 \begin{myrules}
  \item[(1)]
    \begin{align*}
  &      p_{t,j} \leq_0 p \restric t  \wedge  E_{t,j} \subseteq D \wedge
        E_{t,j} \mbox{ is predense below } p_{t,j} \wedge
        \\
        & K_{p_{t,j}}(t) = K_p(t) \wedge \bigwedge_{j=0,1} K_{p_{t,j}}(t^\conc 1) > K_{p_{t,j}}(t^\conc 0) \geq m
    \end{align*}
 
\item[(2)]
    For each $j = 0,1$ separately, the
    \[\{K_{p_{j}}(\splsuc(t^\conc i))  \such t \in
    \splitting_k(p_j)= \splitting_k(p), i = 0,1\}
    \]
    are ordered lexicographically according to the splitting preserving
    homomorphic image of
    $\{t^\conc i \such t \in \splitting_k(p), i = 0,1\}$ in $(2^{k+1}, \leq_{\rm lex})$.
    
    Moreover, for each $t \in \splitting_k(p_j)$, $i = 0,1$,
    $K_{p_{j}}(\splsuc(t^\conc i))$ is strictly larger than $\max\{K_{p_{j}}(s)\such s \in \splitting_{\leq k}(p_j)\}$.

\item[(3)]
    For $j = 0,1$ we let
\[p_j = \bigcup \{p_{t,j} \such t \in \splitting_k(p)\}.\]
Then we have $p_j \leq_k p$ 
and $p_0 \perp p_1$ (even $p_0 \cap p_1$ is finite) and $p_0 \cup p_1 \leq_k p$.
There is a finite set of strengthenings of $p_j$
that is a subset of $D$ and predense below $p_j$.
\nothing{
There is  $h_q \supseteq h$ that describes the incompatibility structure
from Equation~\ref{embed_eins} and Equation~\ref{embed_zwei}.
so that it serves as one step in Lemma~\ref{embed}.}\end{myrules}

\end{lemma}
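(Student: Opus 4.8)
The plan is to prove all three items simultaneously by a single recursion along the splitting nodes of $p$ of degree $k$, mimicking the Spinas--Hein argument for $\spl$ but keeping careful track of the $K_p$-values in the fat splitting sense. First I would fix an enumeration $t_0, t_1, \dots, t_{N-1}$ of $\splitting_k(p)$ and, via the canonical homomorphism $h$ of Definition \ref{1.1}(n), reorder it so that $h(t_0) <_{\rm lex} \dots <_{\rm lex} h(t_{N-1})$; more precisely, I would enumerate the pairs $t^\conc i$ ($t \in \splitting_k(p)$, $i \in 2$) in the lexicographic order of their images in $2^{k+1}$. This fixes the target order demanded in (2). I would then process these pairs one at a time, at each stage passing to a stronger condition in $D$ below the relevant $p\restric t^\conc i$ using the fact that $D$ is open dense together with strong Axiom A (Definition \ref{Axiom_A}), which lets me find, below any condition, a stronger condition lying in $D$ together with a finite predense set — this is where the finiteness of the $E_{t,j}$ comes from.

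The core of the recursion is the bookkeeping of the $K$-values. When I strengthen below $p \restric t^\conc i$ to some condition in $D$, the value $K_{q}(\splsuc(t^\conc i))$ may increase; I exploit the freedom in choosing the strengthening (and, if necessary, further thinning the tree above the split without leaving $D$, since $D$ is open) to push $K_{q}(\splsuc(t^\conc i))$ above any prescribed bound. Processing the pairs in lexicographic order, I demand at step number $\ell$ that $K$ at the new split be strictly larger than all $K$-values assigned at steps $0, \dots, \ell-1$ as well as strictly larger than $\max\{K_p(s) \such s \in \splitting_{\leq k}(p)\}$ and larger than $m$; this simultaneously yields the lexicographic ordering of $\{K_{p_j}(\splsuc(t^\conc i))\}$ in (2) and the inequalities $K_{p_{t,j}}(t^\conc 1) > K_{p_{t,j}}(t^\conc 0) \geq m$ in (1). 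Since the parameter $k$ is fixed and we only strengthen above the level-$(k{+}1)$ extensions of the splitting nodes of degree $k$, the splitting nodes of degree $\leq k$ and their $K$-values are left untouched, so $\splitting_{\leq k}(p_{t,j}) = \splitting_{\leq k}(p)$, $\splitting_k(p_j) = \splitting_k(p)$ and $K_{p_{t,j}}(t) = K_p(t)$, giving $p_{t,j} \leq_0 p\restric t$ and, after gluing, $p_j \leq_k p$ as required by Definitions \ref{leqn} and \ref{bP_p}.

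For item (3) I set $p_j = \bigcup_{t \in \splitting_k(p)} p_{t,j}$; this is a legitimate fat splitting tree because above each degree-$k$ splitting node we have glued in a genuine $\fspl$-condition, and below that level the tree agrees with $p$. The relation $p_j \leq_k p$ is immediate from the previous paragraph. To see $p_0 \perp p_1$, and in fact that $p_0 \cap p_1$ is finite: at the very first pair processed, namely the $\leq_{\rm lex}$-least one, which is $\stem(p)^\conc 0$ versus a later $\stem(p)^\conc 1$ — more carefully, at each degree-$k$ splitting node $t$ I arrange that the branch chosen into $p_{t,0}$ and the branch chosen into $p_{t,1}$ already diverge by level $\max(K_p(t), m)$ or so (the construction of $p_{t,0}$ and $p_{t,1}$ is carried out from the \emph{same} condition $p\restric t$ but with incompatible first choices at the relevant split), so that $[p_0] \cap [p_1]$ is empty, hence $p_0 \cap p_1 \subseteq \bigcup\{p\restric t \such |t| \leq \text{that level}\}$ is finite. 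Finally $p_0 \cup p_1 \leq_k p$ holds since both $p_0$ and $p_1$ do and the union does not create new splitting nodes of degree $\leq k$ nor alter their $K$-values. The predense finite subset of $D$ below $p_j$ is the union of the finitely many $E_{t,j}$, $t \in \splitting_k(p)$ — a finite union of finite sets.

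The main obstacle is the $K$-value bookkeeping in item (2): one must be certain that when strengthening a condition into the open dense set $D$ one retains enough freedom to \emph{raise} $K$ at a chosen split to an arbitrary prescribed value, without disturbing the $K$-values at splits already fixed in earlier stages. This is exactly the point where working with $\fspl$ (rather than $\spl$) must be checked to still go through: one needs that in a fat splitting tree, above any node, one can find arbitrarily late levels that are entirely splitting, and that restricting to a subtree realizing this keeps us inside the still-open $D$; Lemma \ref{triple}'s statement has been phrased so that $D$ is open dense precisely so this thinning step is available. Everything else is routine fusion-style tree surgery.
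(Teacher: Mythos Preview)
Your outline has the right overall shape (process the $2^{k+1}$ many pairs $t^\conc i$ in lexicographic order, push $K$-values up at each stage, glue), but it misses the central technical mechanism of the proof, and as written the argument fails at the step $K_{p_{t,j}}(t)=K_p(t)$.

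You say that ``the splitting nodes of degree $\leq k$ and their $K$-values are left untouched'' simply because you only strengthen above the immediate successors of nodes in $\splitting_k(p)$. That inference is not valid for fat splitting. Recall that $K_{p_{t,j}}(t)=K_p(t)$ means there is a splitting node of $p_{t,j}$ at \emph{every} level $n\geq K_p(t)-1$ above $t$. But your construction deliberately pushes $K_{p_{t,j}}(\splsuc(t^\conc i))$ far above $m$ and above all previously fixed values; doing this on both sides $i=0,1$ creates, on each side separately, a long interval of levels with no splitting at all. Nothing in your description prevents these two gaps from overlapping, and when they do, the fat splitting property at $t$ is destroyed. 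The difficulty is precisely that items (1) and (2) are in tension: you must make $K$ at $\splsuc(t^\conc i)$ large while keeping $K$ at $t$ unchanged.

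The paper resolves this tension by a different device. One does \emph{not} pass below $p\restric t^\conc i$ to a single condition in $D$. Instead, for each side one first fixes a high level $K_i^t$ (with $K_0^t<K_1^t$), keeps \emph{all} nodes of $p\restric t^\conc i$ up to that level, and only then, for every $s$ in $\Lev_{K_i^t}(p\restric t^\conc i)$, chooses a condition $p_{s,j}\in D$ below $p\restric(\splsuc(s)^\conc j)$. The point is that the $t^\conc 1$ side, being untouched up to the larger level $K_1^t$, supplies splitting at every level below $K_1^t$, while $K_1^t$ is chosen strictly above all the $K_{p_{s,j}}$-values arising on the $t^\conc 0$ side, so those $p_{s,j}$ supply splitting at every level $\geq K_1^t$. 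This interlocking is what yields $K_{p_{t,j}}(t)=K_p(t)$; it is also what produces the genuinely \emph{non-singleton} finite set $E_{t,j}$ (one element of $D$ per node $s$ at the relevant level) and, via the choice of direction $j$ at each $\splsuc(s)$, the incompatibility $p_0\perp p_1$.

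A secondary issue: you appeal to strong Axiom~A to obtain the finite predense sets $E_{t,j}$, but in the paper Lemma~\ref{triple} is exactly the lemma from which strong Axiom~A for $\fspl$ is derived (Proposition~\ref{strong_Axiom_A}), so this is circular. One must build the $E_{t,j}$ by hand, and the construction above is how that is done.
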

  
\begin{proof}  We recall, $h$ is defined in Def. \ref{1.1}(n). We go along the lexicographic order $\leq_{\rm lex}$ of $h(t)$
  for $t \in \splitting_k(p)$. Suppose the construction has already be performed for $t' \in \splitting_k(p)$ such that $h(t') \leq_{\rm lex} h(t)$.

  We assume that $m >
 \max\{K_{p_{j}}(s)\such s \in \splitting_{\leq k}(p_j)\}$.

 \emph{Thinning out above $t ^\conc 0$:}\\
  Let
  \[
  K_0^t = \max(\{K_q(t'^\conc 0), K_q(t'^\conc 1) \such t' \in \splitting_k(p), h(t') \leq_{\rm lex} h(t)\} \cup \{m\}).
  \]
For any $s \in \Lev_{K_0^t}(p \restric t ^\conc 0)$, $j \in 2$, there is $p_{s,j}$ such that
\begin{equation}
  \label{j_eins}
  p_{s,j} \leq p \restric (\splsuc(s) ^\conc j) \wedge p_{s,j} \in D.
    \end{equation}
Note that all the $p_{s,j}$ contain $\splsuc(s) ^\conc j$ and do not contain $\splsuc(s) ^\conc (1-j)$.

\emph{Thinning out above $t ^\conc 1$:}\\    
    Let
    \[K^t_1 = \max\{K_{p_{s,j}}( \splsuc(s) \concat j) \such s \in \Lev_{K_0^t}(p \restric t ^\conc 0), j \in 2\} + 1.
    \]
    
    For any $s \in \Lev_{K^t_1}(p \restric t ^\conc 1)$ there is $p_{s,j}$ such that
    \begin{equation}
      \label{j_zwei}
      p_{s,j} \leq p \restric (\splsuc(s) ^\conc j) \wedge p_{s,j} \in D.
    \end{equation}
    Again all the $p_{s,j}$ contain $\splsuc(s) ^\conc j$ and do not contain $\splsuc(s) ^\conc (1-j)$.    
 We let
    \[p_{t,j} = \bigcup \{p_{s,j} \such  s \in \Lev_{K^t_0}(p \restric (t ^\conc 0))\}
    \cup  \bigcup \{p_{s,j} \such s \in \Lev_{K^t_1}(p \restric (t ^\conc 1))\}.
    \]
For $j = 0,1$,    
a finite subset $E_{t,j}$ of $D$ that is predense below $p_{t,j}$ is
\begin{equation*}
  \begin{split}
E_{t,j}   =   \{p_{s,j} \such s \in \Lev_{K^t_0}(p \restric (t ^\conc 0))\}
 \cup  \{p_{s,j} \such s \in \Lev_{K^t_1}(p \rest (t ^\conc 1))\}.
  \end{split}
\end{equation*}
Then $p_j \restric (t ^\conc 0)$ (by reserving enough splitting in the interval above $K^t_1$) and $p_j \restric (t^\conc 1)$ (for the interval below $K^t_1$) together witness
    that we have $K_{p_{t,j}}(t) = K_p(t)$.
 As stated, we let $p_j = \bigcup\{p_{t,j} \, : \, t \in \Lev_k(p)\}$.
Thus we have $p_j \leq_k p$.
    By construction, for any $j =0,1$, $t \in \splitting_k(p)$,
\[K_{p_{t,j}}(\splsuc(t ^\conc 1)) \geq  K^t_1 > K_{p_{t,j}}(\splsuc(t ^\conc 0)) \geq K^t_0 \geq m\] and the lexicographic order is carried on.

We turn to property (3).
A finite subset of $D$ that is predense below $p_j$ is given by $\bigcup \{E_{t,j} \such t \in \splitting_k(p)\}$.
    Finally, properties \eqref{j_eins} and \eqref{j_zwei} guarantee
    $p_0 \perp p_1$.
  \end{proof} 

\begin{corollary} For any $p \in \fspl$ there are $2^\omega$ mutually incompatible
  conditions stronger than $p$.
\end{corollary}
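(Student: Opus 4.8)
The plan is to build a perfect tree of conditions below $p$ and then fuse along its branches. First I would construct, by recursion on $|\eta|$, a family $\la p_\eta \such \eta \in \cantorfin \ra$ of conditions in $\fspl$ with $p_\emptyset = p$ as follows: given $p_\eta$ with $|\eta| = n$, apply Lemma~\ref{triple} to $p_\eta$ with splitting degree $k = n$ (the remaining parameters are immaterial here, e.g.\ take $D = \fspl$ and $m = 0$) to obtain $p_{\eta^\conc 0}$ and $p_{\eta^\conc 1}$; by part~(3) of that lemma these satisfy $p_{\eta^\conc 0}, p_{\eta^\conc 1} \leq_n p_\eta$ and $p_{\eta^\conc 0} \perp p_{\eta^\conc 1}$ (indeed $p_{\eta^\conc 0}\cap p_{\eta^\conc 1}$ is finite). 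The only hypothesis needed to invoke Lemma~\ref{triple} at $\eta$ is that $p_\eta \in \fspl$, so the recursion never stalls.

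Next, for a fixed $x \in \cantor$ the sequence $\la p_{x\restric n} \such n < \omega \ra$ is a fusion sequence, since $p_{x\restric (n+1)} \leq_n p_{x\restric n}$ for all $n$ by construction. Because $\fspl$ has (strong) Axiom~A, there is a lower bound $p_x \in \fspl$ with $p_x \leq_n p_{x\restric n}$ for every $n$; in particular $p_x \leq p_\emptyset = p$. I then claim that $\{p_x \such x \in \cantor\}$ is the desired family. Given $x \neq y$ in $\cantor$, let $n$ be least with $x(n)\neq y(n)$ and set $\eta = x\restric n = y\restric n$; then $p_x \leq p_{x\restric(n+1)} = p_{\eta^\conc x(n)}$ and $p_y \leq p_{\eta^\conc y(n)}$, and since $\{x(n),y(n)\} = \{0,1\}$ we have $p_{\eta^\conc x(n)}\perp p_{\eta^\conc y(n)}$ by construction, whence $p_x \perp p_y$. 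In particular the $p_x$ are pairwise distinct, so $\{p_x \such x\in\cantor\}$ consists of $2^\omega$ mutually incompatible conditions, each stronger than $p$.

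I do not expect a genuine obstacle: the whole argument is driven by Lemma~\ref{triple} (which provides, at any prescribed $\leq_n$-level, a splitting of a condition into two incompatible stronger conditions) together with Axiom~A fusion, both already available for $\fspl$. The only point requiring care is the bookkeeping that aligns the splitting-degree parameter $k$ of Lemma~\ref{triple} with the fusion index $n$ --- which is exactly why one applies the lemma with $k = |\eta|$ at each node $\eta$, so that every branch of the tree $\la p_\eta \such \eta \in \cantorfin\ra$ is a genuine $\leq_n$-fusion sequence whose limit lies in $\fspl$.
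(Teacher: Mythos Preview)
Your proposal is correct and follows essentially the same approach as the paper: build a binary tree of conditions $\la p_\eta \such \eta \in \cantorfin\ra$ with $p_{\eta^\conc 0}, p_{\eta^\conc 1} \leq_{|\eta|} p_\eta$ and $p_{\eta^\conc 0}\perp p_{\eta^\conc 1}$ via Lemma~\ref{triple}, then fuse along each branch. The only cosmetic difference is that the paper takes $p_x = \bigcap_n p_{x\restric n}$ directly (the fusion limit for tree forcings), whereas you invoke Axiom~A abstractly for the lower bound; note that in the paper's ordering Proposition~\ref{strong_Axiom_A} actually comes \emph{after} this corollary, so strictly speaking you should either appeal to the intersection directly or observe that the fusion clause of Axiom~A is immediate from Definition~\ref{leqn}.
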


\begin{proof} By in induction on $\dom(s)$ we construct for
  $j = 0,1$, $p_{s^\conc j} \leq_{|s|} p_s$.
  The successor step is like the previous lemma with $p_s = p$
  and $p_{s ^\conc j} = p_j$ from there.
  Now that $p_s$ for $s \in 2^{<\omega}$ are defined, we let
  for $b \in 2^\omega$, $p_b  = \bigcap \{p_{b \rest n} \such n < \omega\}$.
  Since $p_{b \rest n}$, $n < \omega$, is a fusion sequence, $p_b$ is a condition. \end{proof}

\begin{corollary}
  The set of fat splitting trees $p\in\fspl$ with the property
  such that there is a splitting preserving homomorphism $h$ from
  $\splitting(p) $ onto $2^{<\omega}$ such that for every
  $s,t$ such that $|h(s)| = |h(t)| $ and $h(s) \leq_{\rm lex} h(t)$ we have
  \[K_p(s) < K_p(t)\] is dense.
\end{corollary}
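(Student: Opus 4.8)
The plan is to obtain $q\leq p$ by a single fusion in $\fspl$ in which Lemma~\ref{triple}(2) does all of the work, one splitting level at a time. Set $p_0:=p$. Given $p_n\in\fspl$, I would pick $m_n$ above $\max\{K_{p_n}(s)\such s\in\splitting_{\leq n}(p_n)\}$ and apply Lemma~\ref{triple} to $p_n$ with $k=n$, with $m=m_n$, and with the (trivially open dense) set $D=\fspl$; let $p_{n+1}$ be the condition called $p_0$ in the conclusion of that lemma. Part~(3) of the lemma gives $p_{n+1}\leq_n p_n$, so $\la p_n\such n<\omega\ra$ is a fusion sequence, and by Axiom~A for $\fspl$ (Definition~\ref{Axiom_A}) it has a lower bound $q\in\fspl$ with $q\leq_n p_n$ for every $n$; in particular $q\leq p_0=p$.

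Next I would take $h:=h_q\colon\splitting(q)\to 2^{<\omega}$, the canonical splitting- and $\leq_{\rm lex}$-preserving homomorphism of Definition~\ref{1.1}(n). Since $q$, being fat splitting, is perfect, $h$ is onto and restricts for each $n$ to a $\leq_{\rm lex}$-order preserving bijection of $\splitting_n(q)$ onto $2^n$; hence the property demanded of $q$ amounts to: for every $n\geq 1$ and all $s,t\in\splitting_n(q)$ with $h(s)<_{\rm lex}h(t)$ (the case $h(s)=h(t)$ forcing $s=t$) one has $K_q(s)<K_q(t)$. Fix such $n,s,t$. Because $q\leq_n p_n$, we have $\splitting_{\leq n}(q)=\splitting_{\leq n}(p_n)$ with $K_q$ and $K_{p_n}$ agreeing on this set, and $h$ restricted to $\splitting_{\leq n}(q)$ is computed from exactly that data, hence coincides with $h_{p_n}$ there; so $s,t\in\splitting_n(p_n)$ and $h_{p_n}(s)<_{\rm lex}h_{p_n}(t)$. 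But $p_n$ was produced at stage $n-1$ by Lemma~\ref{triple} applied with $k=n-1$, and part~(2) of that lemma says precisely that, at level $n=(n-1)+1$, the values $K_{p_n}(u)$ for $u$ ranging over $\splitting_n(p_n)$ are strictly increasing along the $\leq_{\rm lex}$-order of their $h_{p_n}$-images. Therefore $K_{p_n}(s)<K_{p_n}(t)$, hence $K_q(s)<K_q(t)$. For $n=0$ there is nothing to check. Since $p$ was arbitrary, the set in question is dense.

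I do not foresee a genuine obstacle: the content is entirely in Lemma~\ref{triple}(2), which already packages, for a single level, exactly the lexicographically increasing pattern of $K$-values that is required. The one point that must be made carefully is persistence across the fusion — namely that passing from $p_n$ to conditions $\leq_n p_n$ (and ultimately to $q$) freezes $\splitting_{\leq n}$ together with the $K$-values on it, so that the canonical homomorphism and the pattern established at stage $n-1$ survive verbatim in $q$. Checking that $h_q$ on $\splitting_{\leq n}(q)$ depends only on this frozen data is the routine bookkeeping step.
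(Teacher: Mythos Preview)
Your proof is correct and follows exactly the paper's approach: construct a fusion sequence by applying Lemma~\ref{triple} at each stage with $j=0$. The paper's own proof is a single sentence to this effect, and you have simply filled in the details the paper omits, in particular the verification that the lexicographic pattern on $\splitting_n$ established at stage $n-1$ persists to the fusion limit via the definition of $\leq_n$.
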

\begin{proof}
  Let $p\in \fspl$. We construct a fusion sequence $\langle p_n \such n\in \omega \rangle$ according to Lemma \ref{triple} by letting $j = 0$ all the time.
  \end{proof}
\nothing{
  Proof of lemma \ref{embed}:

  \begin{proof} Given $p$,
  by induction on $n$ we choose $\la q_n, k_n, h_n \such n < \omega\ra$
  such that \begin{myrules}
   \item[(1)] $q_0 = p$,
\item[(2)]   $ q_{n+1} \leq_n q_n$,
\item[(3)]   $q_{n+1} \rest k_n =  q_n$,
  \item[(4)]
    $h_n \colon q_n \rest \Lev_{k_n} (q_n) \to 2^{<\omega}$
    has Equation~\eqref{embed_eins} and Equation~\eqref{embed_zwei} for $s \in 2^{\leq n}$ 
  \item[(5)] $h_{n+1} \supseteq h_n$.
  \end{myrules}
  According to Lemma~\ref{triple} there is such a sequence.
  Then $q = \bigcap\{ q_n \such n < \omega\}$ and $h = \bigcup \{h_n \such n<\omega\}$ have the required properties.
  \end{proof}
  }
\begin{proposition}\label{strong_Axiom_A} The expanded fat splitting forcing
    $(\fspl,\leq, (\leq_n)_n)$ has strong  Axiom A. 
The same holds for  $\spl$.
\end{proposition}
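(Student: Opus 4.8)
The plan is to verify the three clauses of Definition~\ref{Axiom_A} for $\bP=\fspl$ with the orderings $\langle\leq_n\such n<\omega\rangle$ of Definition~\ref{leqn}; almost all of the work is already contained in Lemma~\ref{triple}, so this proposition mainly repackages it. The argument for $\spl$ will be verbatim, reading $\leq_n$ off Definition~\ref{leqn} with the $\spl$-interpretation of $K_p$ and invoking the $\spl$-analogue of Lemma~\ref{triple} (essentially \cite[Lemma 3.9]{SpHein}, or obtained by rerunning that proof), since, as recorded in the Remark following Definition~\ref{def:fspl}, the treatment of $K_p$ is uniform across the two forcings. Clause~(a) I would read straight off Definition~\ref{leqn}: if $q\leq_{n+1}p$ then $\splitting_{\leq n+1}(q)=\splitting_{\leq n+1}(p)$ with matching $K$-values, so the same holds with $n$ in place of $n+1$, i.e.\ $q\leq_n p$; and $q\leq_0 p$ includes $q\leq p$ by definition.

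For clause~(b) I would run the standard Axiom~A fusion argument. Given a fusion sequence $\langle p_n\such n<\omega\rangle$, set $p=\bigcap_n p_n$. For $m\geq n$ one has $p_m\leq_n p_n$, hence $\splitting_{\leq n}(p_m)=\splitting_{\leq n}(p_n)=:S_n$ with $K$ constant on $S_n$ along the sequence; moreover every node of $S_n$, together with its two immediate successors and the sticks leading up to the next splitting level, lies in each $p_m$, so $S_n=\splitting_{\leq n}(p)$ and $K_p\rest S_n=K_{p_n}\rest S_n$. Hence $p$ is perfect, every node of $p$ lies below a splitting node, and $K_p(t)<\omega$ for each $t\in p$ (it is computed inside some $p_n$), so $p\in\fspl$; and $p\leq_n p_n$ for every $n$.

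Clause~(c) --- the one responsible for the word \emph{strong} --- is exactly Lemma~\ref{triple}. Given $D$ open dense in $\fspl$, $n\in\omega$ and $p\in\fspl$, I apply the lemma with $k=n$ to obtain $p_0\leq_n p$ together with the set $E:=\bigcup\{E_{t,0}\such t\in\splitting_n(p)\}\subseteq D$, which by part~(3) of the lemma is predense below $p_0$; since $\splitting_n(p)$ is finite and each $E_{t,0}$ is finite, $E$ is finite, and the passage between the maximal-antichain and the open-dense formulation of~(c) preserves finiteness. This yields strong Axiom~A for $(\fspl,\leq,(\leq_n)_n)$, and, via the adaptation above, for $\spl$; together with the observation after Definition~\ref{Axiom_A} it also re-proves that both forcings are proper and ${}^\omega\omega$-bounding. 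I do not expect a genuine obstacle here: the only points needing a little care are verifying, in~(b), that the splitting skeleton of the fusion limit below degree~$n$ coincides with that of $p_n$ (so that the limit is again a fat splitting tree and lies $\leq_n p_n$), and, for $\spl$, having the $\spl$-version of Lemma~\ref{triple} at hand.
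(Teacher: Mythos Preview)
Your proposal is correct and follows essentially the same approach as the paper: the paper's proof is a three-sentence pointer to the definition of $\leq_n$ for fusion and to Lemma~\ref{triple} for the strong version of clause~(c), and cites \cite{Sp2004} for the $\spl$ case, while you spell out each clause explicitly and suggest rerunning the argument for $\spl$ via the analogue of Lemma~\ref{triple}. The only place to be slightly more careful than you are is the verification in~(b) that $K_p(t)=K_{p_n}(t)$ for $t\in\splitting_{\leq n}(p_n)$: this uses that the finite sets $p_m\cap 2^{\leq \ell+1}$ eventually stabilise, so the splitting witness at level~$\ell$ in some late $p_m$ survives into $p$; but this is routine and the paper does not spell it out either.
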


\begin{proof}
The result for $\spl$ is already known by work of Shelah and Spinas (see \cite{Sp2004}). The fusion property follows from the definition of $\leq_n$.
By Lemma~\ref{triple}, the forcing order $\fspl$ together with $\leq_n$
according to Definition~\ref{leqn} has strong Axiom A.
\end{proof}

\section{Amoeba forcing and dominating reals}
\label{S3}
In the section we deal with a question addressed by Spinas and Hein, giving more evidence to support their conjecture.

\begin{definition} \label{def:ideal-meas}
  Let $\poset{P}$  be a forcing whose conditions
  are perfect trees ordered by inclusion,
  in particular $\poset{P}$ could be $\spl$, $\fspl$.
  \begin{myrules}
  \item[(1)]
    A subset $X \subseteq 2^\omega$ is called \emph{$\poset{P}$-nowhere dense,} 
if
\[
(\forall p \in \poset{P})( \exists q \leq p) ([q] \cap X = \emptyset).
\]
We denote the ideal of $\mathbb{P}$-nowhere dense sets with $\mathcal{N}_\mathbb{P}$.
\item[(2)]
  A subset $X \subseteq 2^\omega$ is called \emph{$\mathbb{P}$-meager} if it is included in a countable union of $\mathbb{P}$-nowhere dense sets. We denote the $\sigma$-ideal of $\poset{P}$-meager sets with ${\mathcal I}_{\poset{P}}$.
\item[(3)]
A subset $X \subseteq 2^\omega$ is called \emph{ $\poset{P}$-measurable} if 
\[
(\forall p \in \poset{P})( \exists q \leq p) ([q]\setminus X \in \mathcal{I}_\mathbb{P} \vee [q] \cap X \in \mathcal{I}_\mathbb{P}).
\]
\item[(4)]
A subset $X \subseteq 2^\omega$ is called \emph{weakly $\poset{P}$-measurable} if 
\[
(\exists q)([q]\setminus X \in \mathcal{I}_\mathbb{P} \vee [q] \cap X \in \mathcal{I}_\mathbb{P}).
\]
\end{myrules}
\end{definition}
Notice that these notions generalize some well-known ones:
\vspace{3mm}
\begin{center}
\begin{tabular}{|c|c|c|}\hline
 $\mathbb{P}$ & $\mathcal{I_\mathbb{P}}$  & $\mathbb{P}$-measurable \\ \hline
$\mathbb{C}$ & meager ideal  & Baire property \\ \hline
$\mathbb{B}$ & null ideal  &  measurable \\ \hline
$\mathbb{V}$ & Doughnut null  & Doughnut-property \cite{Prisco2000} \\ \hline
$\mathbb{S}$ & Marczewski ideal \cite{Szpilrajn1935}  & Marczewski field \\ \hline
\end{tabular}
\end{center}

\begin{remark}
In general the two ideals do not coincide $\mathcal{I}_\mathbb{P}\neq\mathcal{N}_\mathbb{P}$, for instance when $\mathbb{P}$ is the Cohen forcing. In many other cases however, they do coincide  $\mathcal{I}_\mathbb{P}=\mathcal{N}_\mathbb{P}$, for instance when $\mathbb{P}\in \{\mathbb{S},\mathbb{L},\mathbb{M},\mathbb{V},\mathbb{MA} \}$. When the latter occurs $X$ is $\mathbb{P}$-measurable if and only if:
\[
(\forall p \in \poset{P})( \exists q \leq p) ([q]\subseteq  X  \vee [q] \cap X = \emptyset ).
\]
In our specific case for $\mathbb{P}\in \{\spl,\fspl \}$  the ideal of $\mathbb{P}$-nowhere dense sets is in fact a $\sigma$-ideal and so $\mathcal{I}_\mathbb{P}=\mathcal{N}_\mathbb{P}$. The proof is a fusion argument and is a consequence of Lemma \ref{triple}. In fact, let $X\subseteq\bigcup_n X_n$ with $X_n \in \mathcal{N}_\mathbb{P}$, for $n\in \omega$. Fix a condition $p\in \mathbb{P}$ and recursively apply Lemma \ref{triple} with $D=2^\omega \setminus X_n$ in order to construct a fusion sequence $p=p_0 \geq_0 p_1 \geq_1 \dots$ with the property that for every $n\in \omega$, $[p_n]\cap X_n= \emptyset$. And so, the fusion $q=\bigcap_n p_n$ is such that $[q]\cap X=\emptyset$. 
\end{remark}

\begin{remark} \label{Remark2}
Weak-$\poset{P}$-measurability is a weaker statement then $\poset{P}$-measurability, and if referred to a single set, it is not even a regularity property, as a given set can contain the branches through a tree $p \in \poset{P}$ but being very irregular outside of $[p]$. However classwise staments about weak measurability are in some cases sufficient to obtain measurability. More precisely, let $\Gamma$ be a family of subsets of reals and
\[
\begin{split}
\Gamma(\poset{P})&:= \text{``all sets in $\Gamma$ are $\poset{P}$-measurable''} \\
\Gamma_w(\poset{P})&:= \text{``all sets in $\Gamma$ are $\poset{P}$-measurable''}.
\end{split}
\]
If $\Gamma$ is closed under continuous pre-image and intersection with closed sets, for $\poset{P} \in \{ \sacks, \silver, \miller,\laver, \mathias \}$ one has $\Gamma(\poset{P}) \ifif \Gamma_w(\poset{P})$ (see \cite[Lemma 2.1]{BL99} and \cite[Lemma 1.4]{BLH2005}). Hence, we can obtain some straightforward implications, such as $\Gamma(\laver) \Rightarrow \Gamma(\miller)$ and $\Gamma(\silver) \Rightarrow \Gamma(\sacks)$. 
\end{remark}

\begin{definition}
  (See  \cite[Definition 3.11]{SpHein})
  Let $\poset{P}$ be the splitting forcing or the fat splitting forcing,
  and let $K_p(t)$ be minimal with the properties in the definitions.
  We define $d \colon \poset{P} \to \omega^\omega$ as follows:
\[
d_p(n):=  \min \{ K_p(t^\conc i): i \in 2, t \in \splitting_{n}(p)\}.
\]
\end{definition}

\begin{lemma} \label{lemma:dominate}
Given $f \in \omega$ and $p \in \{ \spl, \fspl  \}$, there exists $q \leq p$ such that $f \leq^* d_q$.
\end{lemma}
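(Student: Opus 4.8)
Here $\bP \in \{\spl,\fspl\}$, $p \in \bP$ and $f \in \baire$. Using $K_q(s) = K_q(\splsuc(s))$ we may rewrite $d_q(n) = \min\{K_q(s) \such s \in \splitting_{n+1}(q)\}$, so the task is to find $q \leq p$ in which every degree-$(n+1)$ splitting node $s$ has $K_q(s) \geq f(n)$, for every $n$ (we will even get this without the ``$*$'').

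The plan is to construct $q$ as the fusion limit of a sequence $p = p_0 \geq_0 p_1 \geq_1 p_2 \geq_2 \cdots$ in $\bP$; such a limit exists by Proposition~\ref{strong_Axiom_A} and satisfies $q \leq_m p_m$ for every $m$. The mechanism is that $\leq_n$ freezes $\splitting_{\leq n}$ together with $K$ on it (Definition~\ref{leqn}): hence, on the one hand, from $q \leq_{n+1} p_{n+1}$ we get $\splitting_{n+1}(q) = \splitting_{n+1}(p_{n+1})$ and $K_q(s) = K_{p_{n+1}}(s)$ for $s \in \splitting_{n+1}(q)$, so that $d_q(n) = d_{p_{n+1}}(n)$; while, on the other hand, the step $p_{n+1} \leq_n p_n$ is still free to rearrange the degree-$(n+1)$ splitting nodes. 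So it is enough to arrange, at stage $n$, that $d_{p_{n+1}}(n) \geq f(n)$.

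For the successor step I would feed $p_n$ into Lemma~\ref{triple} (for $\bP = \spl$ one uses instead its Spinas--Hein predecessor \cite[Lemma~3.9]{SpHein}) with $k = n$, with $D = \bP$, and with the parameter $m$ set to $m_n := \max\bigl(f(n),\ 1 + \max\{K_{p_n}(s) \such s \in \splitting_{\leq n}(p_n)\}\bigr)$ --- the second entry only ensures that $m$ lies above the current $K$-values on $\splitting_{\leq n}$, which the construction in that lemma tacitly requires, and it does not spoil $m_n \geq f(n)$. Taking $p_{n+1}$ to be the condition called $p_0$ there (i.e.\ choosing $j = 0$ throughout), clause~(1) of the lemma gives $p_{n+1} \leq_n p_n$ together with $K_{p_{n+1}}(t^\conc i) \geq m_n \geq f(n)$ for every $t \in \splitting_n(p_n) = \splitting_n(p_{n+1})$ and every $i \in 2$. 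Since every degree-$(n+1)$ splitting node of $p_{n+1}$ is of the form $\splsuc_{p_{n+1}}(t^\conc i)$ for such a $t$ and $i$, and $K_{p_{n+1}}(\splsuc(t^\conc i)) = K_{p_{n+1}}(t^\conc i)$, this is precisely $d_{p_{n+1}}(n) \geq f(n)$, and the construction goes through.

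The one delicate point --- and the reason the argument is short --- is staying inside $\bP$ while pushing the splitting out this far. The crude idea of simply reselecting the degree-$(n+1)$ splitting nodes at levels $\geq f(n)$ does force the relevant $K$-values up, since $K_q(s) \geq |s|$ always; but for $\fspl$ it can destroy fatness, and one has to leave behind enough splitting at the intermediate levels above each retained node. That balancing act is exactly what is carried out inside the proof of Lemma~\ref{triple} (``by reserving enough splitting in the interval above $K^t_1$''), which is why, with that lemma in hand, the present statement reduces to a routine fusion.
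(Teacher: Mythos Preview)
Your proof is correct and fills in precisely the details the paper omits: the paper itself gives no argument here, only citing \cite[Lemma~3.14]{SpHein} for $\spl$ and declaring the $\fspl$ case ``analogous.'' Your route via Lemma~\ref{triple} and a fusion sequence is the natural way to make that analogy explicit, and your handling of the parameter $m$ (padding it above the current $K$-values on $\splitting_{\leq n}$, as the proof of Lemma~\ref{triple} tacitly requires) is exactly right.

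One small remark: Definition~\ref{leqn} and Lemma~\ref{triple} are stated in this paper only for $\fspl$, so for $\spl$ you are implicitly importing both the $\leq_n$'s and the relevant strengthening lemma from \cite{SpHein}; you acknowledge this, and that is in line with how the paper treats the $\spl$ case throughout.
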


A proof for $\spl$ can be found \cite[Lemma 3.14]{SpHein} and the argument for $\fspl$ is analogous and we leave the details to the reader.

The eventual domination relation is denoted by $\leq^*$,
and $\leq$ means sharp domination on $\omega^\omega$.
Note that for two (fat) splitting trees we have $q \leq_\poset{P} p$  implies $d_p \leq d_q$.

\begin{definition} \label{amoeba}
We say that $q$ is an \emph{absolute $\poset{P}$-generic tree over $V$} iff $q \in \poset{P}$ and all its branches are $\poset{P}$-generic over $V$ in any extension, more precisely such that for any ZF-model extension $N \supseteq V$ we have 
\[
N \models q \in \poset{P} \land \forall x \in [q] \text{($x$ is $\poset{P}$-generic over $V$)}.
\]
An \emph{absolute amoeba forcing for $\poset{P}$} is a poset adding an absolute $\poset{P}$-generic tree. 
\end{definition}
We remark that every amoeba forcing in literature, at least to our knowledge, satisfies this property including the natural amoeba for $\spl$ as defined in (see \cite[Definition 3.15.]{SpHein}). Other examples would be the versions of amoeba for Laver and Miller (see \cite[pp 709 and 714]{Sp95}) as proven in \cite[Lemma 1.1.7, Lemma 1.1.8, Remark 1.1.10]{Sp95}.\\
The main idea of using an amoeba forcing is to add a \emph{large} set of \emph{generic} reals. However, we must be careful that this notion is sufficiently absolute, otherwise we might end up with a  useless amoeba. For example, if $G$ is a Sacks-generic filter over $V$, then it is well-known that  in $V[G]$ there is a perfect set  $P$ of Sacks-generic reals. But if we take $H$ a Sacks-generic filter over $V[G]$, then in $V[G][H]$ the set $P$ is no longer a perfect set of Sacks-generic reals. Moreover, in $V[G][H]$ the set of Sacks-generic reals over $V$ is in the Marczewski ideal $\mathcal{I_\mathbb{S}}$.\\
The known application of an amoeba forcing to blow up the additivity number like in \cite[Theorem 1.3.1]{Sp95} uses an absoluteness argument which justifies Definition \ref{amoeba}.\\
In light of that, the following proposition provides more support to Spinas' and Hein's conjecture that  $\add(\ideal{I}_\spl) \leq \mathfrak{b}$ and that any reasonable amoeba for $\spl$ adds a dominating real.


\begin{proposition}\label{2.4}
Let $\poset{P} \in \{ \spl, \fspl \}$ and let $V \subseteq N$ be models of ZFC. If
\[
N \models \text{``There is an absolute $\poset{P}$-generic tree over $V$''}
\]
Then
\[
N \models \text{``There is a dominating real over $V$''}. 
\]
Hence any absolute amoeba forcing for $\poset{P}$ adds a dominating real. 
\end{proposition}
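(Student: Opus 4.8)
The plan is to extract a dominating real from an absolute $\poset{P}$-generic tree $q \in N$ by feeding into $q$, uniformly, every branch-approximation coming from $V$, and reading off the growth rate of the $K_q$-values along those branches. Concretely: fix $q \in N$ that $N$ believes is an absolute $\poset{P}$-generic tree over $V$. Define in $N$ the function
\[
g(n) := \max\{ K_q(t^\conc i) \such i \in 2,\ t \in \splitting_{n}(q)\},
\]
the ``fat'' analogue of $d_q$ taking the max rather than the min over the $n$-th splitting level. Since a fat splitting tree is finitely branching at each level and $\splitting_n(q)$ is finite, $g \in \omega^\omega \cap N$ is well-defined. I claim $g$ (or a mild modification of it) dominates every $f \in \omega^\omega \cap V$.

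The key step is the following genericity-to-domination translation, which is where the absoluteness of $q$ is used. Let $f \in \omega^\omega \cap V$ be arbitrary. By Lemma~\ref{lemma:dominate}, applied \emph{inside $V$}, the set $D_f := \{ r \in \poset{P} \such f \leq^* d_r \}$ is dense in $\poset{P}$; moreover it is definable from $f$, hence it is a dense subset of $\poset{P}$ lying in $V$ (one must check $D_f$ is dense, not merely open dense — but density suffices for meeting it by a generic, or one passes to the open dense set it generates, which is still in $V$). Now pick any branch $x \in [q]$; since $N$ believes $x$ is $\poset{P}$-generic over $V$, the corresponding filter $G_x$ on $\poset{P}$ meets $D_f$, so there is $r \in G_x \cap D_f$, i.e. $r \leq_{\poset{P}} q$, $x \in [r]$, and $f \leq^* d_r$. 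By the remark after Lemma~\ref{lemma:dominate}, $r \leq_{\poset{P}} q$ gives $d_q \leq d_r$, hence... wait — this is the wrong direction: $d$ is antitone, so $r \le q$ yields $d_q \le d_r$, and we need to bound $d_r$ from above by something computed from $q$. This is precisely the point: for a single branch $x$, passing through the $n$-th splitting node of $r$ corresponds to passing through \emph{some} splitting node of $q$ at level $\ge n$, and the value $K_r(\cdot)$ at that node equals $K_q(\cdot)$ at the corresponding node of $q$ (restriction does not change $K$ on nodes that survive). Since $g$ is the \emph{max} of $K_q(t^\conc i)$ over the whole splitting level of $q$, one gets $d_r(n) \leq g(\ell)$ where $\ell$ is the $q$-splitting-degree of the $n$-th $r$-splitting node of $x$; and $\ell \ge n$ but also $\ell$ is controlled because $x$ passes through cofinally many $q$-splitting nodes. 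Re-indexing, this shows $f \leq^* d_r \leq^* g'$ for a suitable monotone rearrangement $g'$ of $g$ (e.g. $g'(n) = \max\{g(m) \such m \le h(n)\}$ for an $N$-definable reindexing $h$). Thus $f \leq^* g'$ for every $f \in \omega^\omega \cap V$, so $g' \in N$ is dominating over $V$.

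I expect the main obstacle to be exactly the bookkeeping in the previous paragraph: reconciling the antitonicity of $d$ with the need to produce an \emph{upper} bound, by choosing the right branch-indexed quantity on $q$ (the ``max over the splitting level'' function $g$, as opposed to $d_q$) and verifying that passing to a subtree $r \le q$ along a fixed branch only reshuffles and thins the splitting nodes in a way that the max-function absorbs. A clean way to organize this: show that for any $r \le_{\poset P} q$ and any $x \in [r]$, the sequence of $K$-values $\langle K_q(u) \such u \in \splitting(q),\ u \subset x\rangle$ enumerated along $x$ is a subsequence of $\langle K_r(v) \such v \in \splitting(r),\ v \subset x\rangle$ interleaved with the splitting nodes of $q$ that $r$ ``used up'', and that $d_r(n)$ is realized at one such node; then dominating $d_r$ reduces to dominating the enumeration of $\{K_q(u) \such u \subset x\}$ by a function depending only on $q$, which $g'$ does once we note each splitting level of $q$ is finite. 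A second, smaller point to check is that $D_f \in V$: this is immediate since Lemma~\ref{lemma:dominate} is provable in $\mathrm{ZFC}$ and $f \in V$, so $V \models$ ``$D_f$ is dense'', and density is downward absolute enough for the generic $G_x$ to meet it (being $\poset{P}$-generic over $V$ means meeting all dense sets of $\poset{P}$ that lie in $V$). The final sentence — that any absolute amoeba forcing adds a dominating real — then follows by taking $N = V[G]$ for $G$ generic for the amoeba, since by Definition~\ref{amoeba} such a $G$ yields an absolute $\poset{P}$-generic tree over $V$ in $V[G]$.
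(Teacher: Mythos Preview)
Your argument has a genuine gap at the step ``$r \in G_x \cap D_f$, i.e.\ $r \leq_{\poset{P}} q$''. The filter $G_x$ is the $\poset{P}$-generic filter over $V$ determined by the branch $x$, so its elements are conditions in $\poset{P}^V$; membership $r \in G_x$ tells you only that $r \in V$ and $x \in [r]$. The tree $q$, however, does not lie in $V$ (it is generic over $V$), so there is no sense in which $r \leq q$ follows, and indeed it is generally false. All you actually have is two trees $q$ and $r$ sharing the single branch $x$, and from this there is no relation whatsoever between $K_r$ and $K_q$ or between their splitting structures --- your remark that ``restriction does not change $K$ on nodes that survive'' would apply to $q \restric t$, not to an arbitrary $r \in V$ with $x \in [r]$. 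Hence the bookkeeping you propose cannot get started: $d_r$ and your function $g$ are computed from unrelated trees, and no reindexing $g'$ of $g$ will absorb $d_r$.

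The paper closes exactly this gap by passing to a further Cohen extension $N[c]$ and using the canonical homeomorphism $H \colon 2^\omega \to [q]$. By absoluteness of $q$, $H(c)$ is $\poset{P}$-generic over $V$ for \emph{every} Cohen real $c$ over $N$, so for a given dense $D_f \in V$ there is a Cohen condition $\sigma$ and some $p \in D_f$ with $\sigma \force H(\dot c) \in [p]$. Since this holds for all Cohen $c \supseteq \sigma$, one concludes that the whole subtree $q \restric \bar H(\sigma)$ is contained in $p$, i.e.\ $q \restric \bar H(\sigma) \leq p$. Now antitonicity of $d$ runs the right way: $d_{q \restric \bar H(\sigma)} \geq d_p \geq^* f$. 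The countable family $\{q \restric \bar H(\sigma) : \sigma \in 2^{<\omega}\}$ lies in $N$ and its $d$-values jointly dominate $V \cap \omega^\omega$; a standard diagonalisation then yields a single dominating real. The idea missing from your attempt is precisely this upgrade from ``one branch lies in $[p]$'' to ``a whole subtree of $q$ lies in $p$'', which the Cohen-forcing argument supplies.
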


\begin{proof}
 We construct an $\omega$ sized family $\{q_k \such k\in \omega\}$ that dominates all $f \in V \cap \omega^\omega$. Note that this is enough since we can use a standard diagonal argument and define $x\in \omega^\omega$ as
$$x(n):= \sup\{d_k (n) \such k\leq n \} +1 .$$
Then $x$ almost dominates all $f \in V \cap \omega^\omega$. 

Let $q \in N$ be the $\poset{P}$-generic tree over $V$.

Now let $\bar H: 2^{<\omega} \rightarrow \splitting(q)$ be as in Definition \ref{1.1} (o), i.e., $\bar H(\emptyset)=\stem(q)$ and $\bar H(\sigma^\conc i)=  \splsuc(\bar H(\sigma)^\conc i)$, for every $\sigma\ \in 2^{<\omega}$, $i \in \{ 0,1 \}$; and $H \colon  2^\omega \rightarrow [q]$ be its natural associated extension.  Then consider $\dot c$ the canonical $\cohen$-name for a Cohen real over $N$. Note that for every Cohen real $c$ over $N$ we have  
\[
N[c] \models H(c) \in [q] \land \text{$H(c)$ is $\poset{P}$-generic over $V$}.
\]
Let $G_c$ be the filter associated with $H(c)$ that is $\poset{P}$-generic over $V$, i.e., $N[c] \models H(c)= \bigcap \{[p]: p \in G_c\}$.  Hence for any $\poset{P}$-open dense set $D \in V$  there is $p \in D \cap G_c$, and so $N[c] \models H(c) \in[p]$. Hence there is $\sigma_{D,p} \in \cohen$ such that $\sigma_{D,p} \force  H(\dot c) \in [p]$ (in this case this follows since $H(c)$ is $\poset{P}$-generic over $V$ and therefore given any $\poset{P}$-open dense $D$ subset of $\poset{P}$ in $V$ the $\poset{P}$-generic $H(c)$ has to meet such $D$.)

 So we get the following:\\\
for every $\poset{P}$-open dense $D$ of $\poset{P}$ in $V$ there exists $ p\in D$ and $\sigma \in 2^{<\omega}$ such that
\begin{equation}\label{Cohen real}
  \begin{split}
&\text{for each Cohen real $c$ extending $\sigma$} \\
    &N[c]\models H({c})\in [p] \cap  [q\restric \bar {H}(\sigma)].
  \end{split}
\end{equation}
\begin{claim}\label{q H}
From (\ref{Cohen real}) it follows that $q\restric \bar{H}(\sigma) \leq p$. 
\end{claim}
To prove the claim we argue by contradiction, so assume $t\in q\restric \bar{H}(\sigma) \setminus p$. Pick $\tau \trianglerighteq \sigma$ such that $\bar{H}(\tau) \trianglerighteq t$. Now fix any Cohen real $c$ extending $\tau$. Then $H(c)$ extends $t$ and therefore $H(c)\in [q\restric \bar{H}(\sigma)] \setminus [p]$. This contradicts \ref{Cohen real}.\\

Now we finish the proof of Proposition~\ref{2.4}.
Let $\{q_k: k \in \omega\}$ enumerate $\{q\restric \bar{H}(\sigma) \such \sigma \in \cohen\}$. We claim that $\{d_{q_k} \such k\in \omega \}$ dominates all $f \in V \cap \omega^\omega$. In fact, fix   $f \in V \cap \omega^\omega$. By Lemma~\ref{lemma:dominate}, for every $f \in V \cap \omega^\omega$ we pick
a $\poset{P}$-open dense $D_f \subseteq \poset{P}$ in $V$ such that for every $p \in D_f$, $f\leq^* d_p$. Then pick $\sigma \in \cohen$ and $p\in D_f$ as in (\ref{Cohen real}). Take $k\in \omega$ such that $q_{k}=q\restric \bar{H}(\sigma)$. Note that by Claim \ref{q H} we have that $q_{k} \leq p$ and so for all but finitely many $n \in \omega$, 
$d_{q_{k}}(n) > f(n)$.

\end{proof}

\begin{remark} \label{remark 3.8}
Remind that the bounding number, $\mathfrak b$, is the minimal size of an $\leq^*$-unbounded
family. Beyond what we prove in Proposition \ref{2.4}, in \cite{SpHein} Spinas and Hein addressed also the following parallel question: is $\text{add}(\mathcal{I}_\poset{SP}) \leq \mathfrak b$ provable in ZFC? 
In a previous version of the paper, we tried to use the method implemented in Proposition \ref{2.4} in order to prove $\text{add}(\mathcal{I}_\poset{P}) \leq \mathfrak b$, for $\poset{P} \in \{ \spl,\fspl \}$. We deeply thank Otmar Spinas to find a gap in the argument and write us in a private communication. The point where our proof specifically works in this case is that the $\poset{P}$-generic tree $q$ is covered by  any $\poset{P}$-open dense from the ground model $V$, but when trying to use a similar argument for proving $\text{add}(\mathcal{I}_\poset{P}) \leq \mathfrak b$ one needs a finer argument to obtain $H(c)$ be caught by a tree $p$ in a given $\poset{P}$-open dense set in $N$.   
\end{remark}

\section{A brief digression on the Silver amoeba and Cohen reals}
\label{S3a}

Spinas \cite{Sp15} showed that the ideal of meager sets $\meager$ Tukey-embeds into the $\sigma$-ideal $\ideal{I}_\silver$ corresponding to Silver forcing, in symbols $\meager \leq_T \ideal{I}_\silver$. This result is surprising because it is in sharp contrast with the other popular non-ccc tree forcings: see \cite{LShV93} and \cite{Lag14} for Sacks, \cite{Sp95} for Laver and Miller, and for Mathias is folklore. Spinas' brilliant proof idea essentially involves two key steps: 1) a quite technically demanding investigation of the Silver antichain number; 2) a coding by Hamming weights of a Cohen real \emph{inside} a Silver tree. 
This result concerning the existence of such a Tukey embedding is in parallel with the fact that any amoeba for Silver necessarily adds Cohen reals. In fact, given any absolute amoeba $\mathbb{A}\silver$ for Silver and assume $\mathbb{A}\silver$ is proper. Then, a countable support iteration of length $\omega_2$, blows up  $\add(\mathcal{I_\silver})$. Now using Spinas' result of the Tukey-reducibility one gets in the generic extension $\aleph_1<\add(\mathcal{I_\silver})\leq \add(\mathcal{M})$ and thus one deduces that Cohen reals must have been added during the iteration. However, it remains unclear, whether a single step of the amoeba $\mathbb{A}\silver$ necessarily adds a Cohen. For instance, the following situation may occur: First $\mathbb{A}\silver$ adds half a Cohen but no Cohen reals and second $\mathbb{A}\silver * \mathbb{A}\silver$ adds a Cohen real. Forcings with these two properties exist (see \cite[Theorem 1.3]{ZAPLETAL201431}).

In this section we give a direct proof that any absolute amoeba for Silver adds Cohen reals. We still use the coding from part 2), but replace the argument about Silver antichains with an alternative method based on Cohen forcing as in the proof of Proposition \ref{2.4}.
A similar reasoning as in Remark \ref{remark 3.8} prevents us to get $\meager \leq_T \ideal{I}_\silver$ as well.

We do not go through the details of the coding (by Hamming weights) and we refer the reader to \cite[Coding Lemma 2 and Thinning-Out Lemma 3]{Sp15} for the proofs. 
We only recall from \cite{Sp15} the definition and the main result we need in our proof below.

\begin{definition}
For $n,k < \omega$ we define $d(n, k) \in 2$ by letting $d(n, k) = 0$ iff the
unique $j < \omega$ such that $n \in [j\cdot 2 k,(j+1)\cdot 2 k )$ is even. Let $e(n)$ be the minimal
$k < \omega$ with $2 k > n$. Let $c(n) = \langle d(n, k) \such k < \omega \rangle$ and $c^*(n) = c(n) \restric e(n)$.

For $\sigma \in 2^{<\omega}$ the \emph{Hamming weight of $\sigma$} is defined as $HW(\sigma):= |\{ i < |\sigma|: \sigma(i)=1 \}|$.
\end{definition}

As usual we denote with $\cohen=(2^{<\omega},\subseteq)$ the Cohen forcing. 

\begin{lemma}\emph{(\cite[Thinning-Out-Lemma 3]{Sp15})}
Given $\{ D_j : j < \omega \}$ a family of open dense sets $D_j \subseteq \cohen$ and $p \in \silver$, there exists $q \in \silver$ such that $q \leq p$ and for every $n < \omega$
and $\sigma \in \splitting_{n+1}(q)$, if $m = |\tau |$ for any $\tau \in \splitting_{n}(q)$ and $k = HW (\sigma)$, then
$w^\conc c^* (k) \in D_j$ for every $w \in 2^{\leq m}$ and $j \leq n$.
\end{lemma}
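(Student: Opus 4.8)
The plan is to prove this by a fusion (Axiom~A) argument for $\silver$ in which the arithmetic of the coding function $c^*$ does all of the work at each step; this is exactly the shape of Spinas' argument for \cite[Lemma~3]{Sp15}, so I only sketch it. Recall that a condition $q\in\silver$ is determined by an increasing sequence of splitting levels $\ell_0<\ell_1<\cdots$ together with the constant values it prescribes at the non-splitting levels, and that $\splitting_n(q)$ consists precisely of the $2^n$ nodes of $q$ of length $\ell_n$. A node $\sigma\in\splitting_{n+1}(q)$ has the form $\stem(q)^\conc a_0^\conc u_0^\conc a_1^\conc\cdots^\conc a_n^\conc u_n$ with $a_i\in 2$ the split directions and $u_i$ the prescribed block on $(\ell_i,\ell_{i+1})$, so $HW(\sigma)=HW(\stem(q))+\sum_{i\le n}HW(u_i)+\sum_{i\le n}a_i$; thus, as $\sigma$ ranges over $\splitting_{n+1}(q)$, the value $k=HW(\sigma)$ sweeps out the block $[C_n,\,C_n+n+1]$ of $n+2$ consecutive integers, where $C_n=HW(\stem(q))+\sum_{i\le n}HW(u_i)$. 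Finally, a fusion sequence $p=q_0\ge_0 q_1\ge_1 q_2\ge_2\cdots$ (agreement up to and including the $n$-th splitting level and on the prescribed values below it, in the usual Silver sense) has a lower bound $q=\bigcap_n q_n\in\silver$ with $q\le p$.

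I would build such a sequence recursively. At stage $n$ the tree $q_n$ already has $\ell_0<\cdots<\ell_n$ and the blocks $u_0,\dots,u_{n-1}$ frozen; write $m=\ell_n=|\tau|$ for $\tau\in\splitting_n(q_n)$ and $C=HW(\stem(q_n))+\sum_{i<n}HW(u_i)$. It remains to choose the next block $u_n$ — equivalently, to choose its Hamming weight $HW(u_n)$, after which its length (hence $\ell_{n+1}$) may be taken as large as we like — so that for every $k$ in the resulting block $[C+HW(u_n),\,C+HW(u_n)+n+1]$, every $w\in 2^{\le m}$ and every $j\le n$ we have $w^\conc c^*(k)\in D_j$. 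For each pair $(w,j)$ with $w\in 2^{\le m}$, $j\le n$, the set $E_{w,j}=\{v\in\cohen \such w^\conc v\in D_j\}$ is open dense in Cohen forcing (density from density of $D_j$, upward closure from openness of $D_j$), hence $E:=\bigcap_{w,j}E_{w,j}$ is open dense; so it suffices to choose $HW(u_n)$ so that $c^*(k)\in E$ simultaneously for the finitely many $k$ in the block.

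The place where everything is bought is precisely this simultaneous choice, and it is the content of Spinas' Coding Lemma \cite[Lemma~2]{Sp15}, which I would invoke as a black box: the function $d$ (hence $c^*$) is engineered so that, for a block of consecutive Hamming weights placed suitably high, all but finitely many ``slowly varying'' coordinates of $c^*(k)$ are constant across the block and can be prescribed by a congruence on $C+HW(u_n)$, while only the finitely many ``fast'' coordinates fluctuate; since an open dense set of Cohen conditions remains open dense after fixing finitely many leading bits, one can choose these controllable coordinates so that every one of $c^*(C+HW(u_n)),\dots,c^*(C+HW(u_n)+n+1)$ lands inside $E$. Granting this, freeze $u_n$ and a large $\ell_{n+1}$ and set $q_{n+1}\le_n q_n$. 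The limit $q=\bigcap_n q_n$ is then a Silver tree below $p$ which, by construction, satisfies: for every $n$, every $\sigma\in\splitting_{n+1}(q)$, with $k=HW(\sigma)$ and $m=|\tau|$ for $\tau\in\splitting_n(q)$, one has $w^\conc c^*(k)\in D_j$ for all $w\in 2^{\le m}$ and $j\le n$.

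The main obstacle is exactly the highlighted step: handling \emph{all} degree-$(n+1)$ splitting nodes at once, despite their Hamming weights filling an interval of length $n+2$. One cannot route this through a single common extension of a ``good'' finite Cohen condition, since already the leading bit of $c^*(k)$ changes within any interval of length $\ge 3$; one genuinely needs the fine periodicity structure of $d$ to separate the prescribable tail of $c^*(k)$ from its fluctuating head, which is the reason the coding is by Hamming weight rather than by node length. Everything else — the bookkeeping of the fusion and the verification that the limit is a Silver condition below $p$ — is routine, so I would be content to cite \cite[Coding Lemma~2 and Thinning-Out Lemma~3]{Sp15} for the details of that step.
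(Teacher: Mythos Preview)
The paper does not give its own proof of this lemma: it is stated with the citation \cite[Thinning-Out-Lemma 3]{Sp15} and used as a black box. Your sketch is therefore not competing against anything in the paper, and in fact provides more detail than the paper does; since you ultimately defer to \cite[Coding Lemma~2 and Thinning-Out Lemma~3]{Sp15} for the key step, your approach and the paper's coincide.
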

 
The property given in the Thinning-Out-Lemma is not only dense, but even open, i.e., for every $p \in \silver$ there exists $q \leq p$ such that every $q' \leq q$ satisfies the property of the Thinning-Out-Lemma (see \cite[Remark 3]{Sp15}).

\begin{proposition}
Let $\silver$ be the Silver forcing and let $V \subseteq N$ be models of ZFC. If
\[
N \models \text{``There is an absolute $\poset{V}$-generic tree over $V$''}
\]
Then
\[
N \models \text{``There is a Cohen real over $V$''}. 
\]
Hence any absolute amoeba forcing for $\silver$ adds a Cohen real. 
\end{proposition}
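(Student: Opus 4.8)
The strategy is a direct adaptation of the argument for Proposition~\ref{2.4}, with the dominating real replaced by a Cohen real produced via the Hamming-weight coding of Spinas. Let $q \in N$ be an absolute $\silver$-generic tree over $V$. As in Definition~\ref{1.1}(o) let $\bar H \colon 2^{<\omega} \to \splitting(q)$ be the canonical splitting-preserving map and $H \colon 2^\omega \to [q]$ its extension. Fix $\dot c$ the canonical name for a Cohen real over $N$; since genericity of $q$ is absolute, for every Cohen real $c$ over $N$ we have $N[c] \models H(c) \in [q] \land H(c)$ is $\silver$-generic over $V$. The plan is to read a Cohen real over $V$ off the Hamming weights of the splitting nodes of $q$ that $H(c)$ passes through: precisely, set $r_c(n) := HW\bigl(\bar H(c\restric (n+1))\bigr)$ (or the analogous sequence of $c^*(k)$-blocks dictated by the Thinning-Out-Lemma), and show that in $N[c]$ the real coding these values is Cohen over $V$.

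\textbf{Key steps.} First, by the Thinning-Out-Lemma applied in $V$ (to an enumeration of all open dense subsets of $\cohen$ lying in $V$, which is a set in $V$), one may assume, strengthening $q$ inside the class of trees with the open property noted after the Lemma, that every sufficiently deep splitting node of $q$ has the property that $w^\conc c^*(k)$ meets the relevant dense sets, where $k$ is its Hamming weight. Second, mimic Claim~\ref{q H}: for each $\silver$-open dense $D$ in $V$ one finds, by forcing with $\cohen$ over $N$, a condition $p \in D$ and a finite $\sigma \in 2^{<\omega}$ such that every Cohen real $c \supseteq \sigma$ satisfies $N[c] \models H(c) \in [p] \cap [q\restric \bar H(\sigma)]$, whence $q \restric \bar H(\sigma) \leq_\silver p$. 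Third, combine these two ingredients exactly as in Proposition~\ref{2.4}: any ground-model dense set $E \subseteq \cohen$ that one wants the coded real to meet is absorbed into the family fed to the Thinning-Out-Lemma, so some extension $q\restric \bar H(\sigma)$ forces the coded real to have a value in $E$; since the sequences $\bar H(\sigma)$ for $\sigma \in 2^{<\omega}$ are cofinal in $q$, this shows the coded real meets every dense $E \in V$, i.e.\ is Cohen over $V$.

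\textbf{Main obstacle.} The delicate point is the interaction between the two layers of coding: the Hamming-weight coding inside the Silver tree lives on splitting levels $\splitting_n(q)$, while the Cohen real $c$ driving $H$ indexes these levels, and one must check that the block $c^*(k)$ extracted from the splitting node $\bar H(c\restric(n+1))$ genuinely varies as $c$ ranges over all extensions of a fixed $\sigma$ — i.e.\ that restricting to $[q\restric \bar H(\sigma)]$ does not freeze the coded real. This is exactly where the \emph{open} (not merely dense) character of the Thinning-Out property is used: passing to a stronger tree $q \restric \bar H(\sigma)$ preserves the coding property, so the argument of Claim~\ref{q H} goes through for the coded real rather than for $H(c)$ itself. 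Verifying that the resulting sequence of $c^*(k)$-blocks is a genuine Cohen real (and not, say, trapped in a meager set because consecutive blocks are correlated) is handled precisely by the Coding Lemma of \cite{Sp15}, which we invoke as a black box; the only new content over \cite{Sp15} is replacing the Silver-antichain-number estimate by the Cohen-forcing absoluteness argument above, exactly parallel to the passage from \cite{SpHein} to our Proposition~\ref{2.4}.
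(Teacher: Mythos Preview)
There is a genuine gap: the real you construct lives in the wrong model. Your coded real $r_c(n) = HW(\bar H(c\restric(n+1)))$ is defined from the Cohen generic $c$ over $N$, so $r_c \in N[c]$, and you explicitly say ``show that in $N[c]$ the real coding these values is Cohen over $V$''. But finding a Cohen real over $V$ inside $N[c]$ is vacuous --- $c$ itself is Cohen over $V \subseteq N$. The proposition asks for a Cohen real in $N$, and your argument never produces one there. A secondary problem: your first step feeds \emph{all} open dense subsets of $\cohen$ in $V$ into the Thinning-Out-Lemma at once, but that lemma handles only a countable family $\{D_j : j<\omega\}$, while there are continuum-many such sets.

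The paper's proof uses the Cohen-forcing layer only to establish the analogue of Claim~\ref{q H} (for every $\silver$-open dense $D\in V$ there are $\sigma$ and $p\in D$ with $q\restric\bar H(\sigma)\leq p$), and then stays entirely inside $N$. One enumerates $\{q_k:k\in\omega\}=\{q\restric\bar H(\sigma):\sigma\in 2^{<\omega}\}$ and builds the Cohen real as a concatenation $z=\xi^0_{n_0}{}^\conc\xi^1_{n_1}{}^\conc\cdots\in N$, where $\xi^k_n = c^*(HW(\sigma))$ is read off the leftmost node of $\splitting_{n+1}(q_k)$ and the $n_k$ are chosen so that the prefix before $\xi^k_{n_k}$ is short enough for the Thinning-Out conclusion to apply. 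For a given dense $E\in V$ one applies the Thinning-Out-Lemma with $D_j=E$ for all $j$ to get a single $\silver$-open dense $D_E$; the claim then yields some $q_k\leq p\in D_E$, and openness of the thinned-out property makes $q_k$ itself thinned out for $E$, forcing an initial segment of $z$ into $E$. This diagonal over the $q_k$'s is the exact analogue of $x(n)=\sup\{d_{q_k}(n):k\leq n\}+1$ in Proposition~\ref{2.4}, and is precisely what your proposal is missing.
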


\begin{proof}
The proof follows the line of that one for Proposition \ref{2.4}.
First of all, given $E \subseteq \cohen$ open dense, we apply Spinas' Thinning-Out-Lemma with $\{ D_j : j < \omega \}$ such that $D_j=E$, for all $j \in \omega$,  in order to get a $\silver$-open dense set $D_E \subseteq \silver$ such that for every $q \in D_E$
\begin{equation} \label{eq2}
(\forall n < \omega)( \forall w  \in 2^{\leq m_n})( w^\conc \xi_n\in   E), 
\end{equation}
where $\xi_n:= c^*(HW(\sigma_n))$ with $\sigma_n$ is the leftmost sequence in $\splitting_{n+1}(q)$, and $m_n \in \omega$ is the length of any $\tau \in \splitting_{n}(q)$.

 Let $q \in N$ be an absolute $\silver$-generic tree over the ground model $V$. As in Definition \ref{1.1} (o), let $\bar{H}: 2^{<\omega} \rightarrow \splitting(q)$ be the $\trianglelefteq$-preserving function  and $H: 2^\omega \rightarrow [q]$ its natural extension. Let $c$ be a Cohen real over $V$. Like in the proof of Proposition \ref{2.4}, we conclude that for every $\silver$-open dense set $D\in V$ there exists $p \in D$ such that $N[c] \models H(c) \in [p]$, and so there exists $\sigma \in \cohen$ such that $\sigma \force H(c) \in [p]$. As before we get the following claim.

\begin{claim}\label{q sigma}
For every $\silver$-open dense set $D \in V$ there exists $\sigma \in \cohen$ and $p\in D$ such that  $q\restric  \bar{H}(\sigma) \leq p$.
\end{claim}

So let $\{ q_k: k \in \omega \}$ enumerate all $q\restric \bar H (\sigma)$'s, and let $m^k_n$, $\xi^k_n$ be associated with $q_k$ as in (\ref{eq2}) above. Then put \[ z:= {\xi^0_{n_0}}^\conc {\xi^1_{n_1}}^\conc \dots^\conc {\xi^k_{n_k}}^\conc \dots, \] where the $n_k$'s are chosen recursively as follows:
$n_0=0$ and for $k \geq 1$, $n_{k}$ is such that $|{\xi^0_{n_0}}^\conc {\xi^1_{n_1}}^\conc \dots^\conc {\xi^{k-1}_{n_{k-1}}}| \leq m^{k}_{n_{k}}$. 

We aim at showing that $z$ is Cohen over $V$. Let $E \subseteq \cohen$ be open dense in $V$. Let $D_E \subseteq \silver$ be a $\silver$-open dense set in $V$ such that every $p \in D_E$ satisfies  (\ref{eq2}).  By Claim \ref{q sigma} there exists $k \in \omega$ such that $q_k \leq p$, for some $p \in D_E$. By construction, for every $w \in 2^{\leq m^k_{n_k}}$ we have $w^\conc \xi^{k}_{n_{k}} \in E$. Since $|{\xi^0_{n_0}}^\conc {\xi^1_{n_1}}^\conc \dots^\conc {\xi^{k-1}_{n_{k-1}}}| \leq m^{k}_{n_{k}}$, it follows ${\xi^0_{n_0}}^\conc {\xi^1_{n_1}}^\conc \dots^\conc {\xi^{k-1}_{n_{k-1}}}^\conc {\xi^{k}_{n_k}} \in E$. 

Hence, for every $E \in V$ open dense of $\cohen$, there exists $n \in \omega$ such that $z\restric n \in E$, which means $z$ is Cohen over $V$.

\end{proof}

\section{Fatness versus measure}

Let $\mu$ denote the standard measure on $2^\omega$. Then, the Random forcing $\mathbb{B}$ consists of all perfect trees $p\subseteq 2^{<\omega}$ with positive measure, ordered by inclusion. We denote the $\sigma$-ideal of measure zero sets with $\mathcal{N}$.\\
In this section we compare $\fspl$ with $\mathbb{B}$. We show that each random condition is modulo a measure zero set equal to a fat splitting tree, but the converse does not hold. In fact, we show that the set of fat splitting trees with measure zero is dense in $\fspl$. We conclude this section by scrutinizing the differences between the two corresponding $\sigma$-ideals $\mathcal{I}_\fspl$ and $\mathcal{N}$. 

\begin{lemma} \label{fspl subset random}
$\mathbb{B} \cap \fspl $ is dense in $\mathbb{B}$.
\end{lemma}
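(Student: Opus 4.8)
The plan is to show that, starting from an arbitrary random condition $p \in \random$ (a perfect tree of positive measure), we can find a fat splitting subtree $q \leq p$ that still has positive measure. The key observation is that a set of positive measure has positive-measure ``density'' at almost every branch through it, so there is plenty of branching available; the only issue is that a random tree may have long stretches with no splitting at a prescribed level, whereas a fat splitting tree requires, above every node $t$, a splitting node on \emph{every} level beyond some threshold $K_q(t)$.

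First I would recall the Lebesgue density theorem in the form adapted to $2^\omega$: for $\mu$-almost every $x \in [p]$ and every $\varepsilon > 0$, there is $n$ such that $\mu([p] \cap [x\restric n]) > (1-\varepsilon)\cdot 2^{-n}$. I would fix a small $\varepsilon$ (say $\varepsilon = 1/4$, or a summable sequence $\varepsilon_k = 2^{-k-2}$ to be used at stage $k$) and build $q$ by a fusion-style recursion on levels. At each step, having committed to a finite tree $F \subseteq p$ with terminal nodes at some level, I extend each terminal node $t$ to a node $t' \trianglerighteq t$ at which $[p]$ has density close to $1$ (available by the density theorem applied below $t$, since $\mu([p] \restric t) > 0$); once the density above $t'$ exceeds $1-2^{-|t'|-3}$, a counting argument forces $[p]\restric t'$ to contain, for \emph{every} level $\ell \geq |t'|$, some node at level $\ell$ that splits in $p$ — indeed if no node at level $\ell$ split, then $[p]\restric t'$ would be contained in a single branch extension and have measure at most $2^{-\ell} \cdot 2^{|t'|} \cdot 2^{-(\ell-|t'|)}$... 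I would rather phrase it as: if at level $\ell$ above $t'$ the tree $p$ has at most $N$ nodes, then $\mu([p]\restric t') \leq N\cdot 2^{-\ell}$, so $N \geq 2^\ell \mu([p]\restric t') \geq 2^\ell(1-2^{-|t'|-3})2^{-|t'|}$, which grows, guaranteeing that the number of nodes strictly increases from level $\ell$ to $\ell+1$ for cofinitely many $\ell$, i.e.\ splitting occurs cofinitely often above $t'$. Taking $K_q(t) = |t'|$ then witnesses the fat splitting requirement at $t$ (and at all its initial segments, by monotonicity). Throwing away at each stage only a $2^{-k-2}$ fraction of the remaining measure, the intersection $q$ of the fusion has measure at least $\mu([p]) - \sum_k 2^{-k-2} \mu([p]) = \tfrac34\mu([p]) > 0$, so $q \in \random$; and $q$ is perfect with the fat splitting property at every node by construction, so $q \in \fspl$.

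The main obstacle I anticipate is the bookkeeping needed to make the two requirements cohere simultaneously at every node: ``above every $t \in q$ there is a level $K_q(t)$ beyond which \emph{every} level has a splitting node of $q$ (not just of $p$)''. It is not enough that $p$ splits cofinitely above $t'$; I must ensure $q$ itself inherits enough of that splitting. This is handled by the standard device of, at stage $k$ of the fusion, resolving splitting only for nodes of splitting-degree $\leq k$ and, when passing from $t$ to $t'$, preserving \emph{all} of $p$'s branching on the finite interval $[|t'|, |t''|)$ that is needed before the next stage commits — i.e.\ one keeps full subtrees on initial segments and only thins far out, exactly as in Lemma~\ref{triple}. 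One must check that the density gained above $t'$ is not destroyed by later thinning, which is why the $\varepsilon_k$ must be summable and charged against the \emph{current} remaining measure. Once this is set up the verification that $q \in \fspl \cap \random$ and $q \leq p$ is routine, and density of $\random \cap \fspl$ in $\random$ follows since $p$ was arbitrary.
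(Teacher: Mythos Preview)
Your instinct that positive measure forces abundant splitting is exactly right, but you have wrapped it in far more machinery than needed, and the ``main obstacle'' you identify is a genuine difficulty in your approach that you have not actually resolved: your sketch never makes precise how the fusion preserves, for \emph{every} node of the eventual $q$, a splitting node of $q$ at every sufficiently large level, while simultaneously keeping the measure loss summable. The phrase ``preserving all of $p$'s branching on the finite interval $[|t'|,|t''|)$'' is too vague to carry this.

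The paper sidesteps all of this with a one-step pruning. Given $p \in \random$, let $N = \{t \in p : \mu([p\restric t]) = 0\}$ and set $q := p \setminus N$. Since $N$ is upward closed in $p$, $q$ is a tree; since $N$ is countable and each $[p\restric t]$ for $t \in N$ is null, $\mu([q]) = \mu([p]) > 0$. The point is that now \emph{every} node $t \in q$ satisfies $\mu([q\restric t]) > 0$, and this alone forces fat splitting: if above some $t$ there were infinitely many levels $n_0 < n_1 < \cdots$ with no splitting node, then $|\Lev_{n_i+1}(q\restric t)| \leq 2^{n_i - i}$, so $\mu([q\restric t]) \leq 2^{-(i+1)}$ for every $i$, hence $\mu([q\restric t]) = 0$, a contradiction.

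So the density theorem, the fusion, and the $\varepsilon_k$-bookkeeping are all unnecessary. Your counting argument (one nowhere-splitting level halves the available measure) is precisely the core of the paper's proof; you just failed to notice that applying it with threshold $0$ rather than with density close to $1$ lets you prune once, globally, and be done.
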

\begin{proof}
We call a level $n$ of a tree $p\subseteq 2^{<\omega}$ \emph{nowhere splitting} if $|\Lev_n(p)|=|\Lev_{n+1} (p)|$, i.e., if there is no splitting node $s\in \Lev_n (p)$. Let $p\in \mathbb{B}$ be given and put $N:=\{ t \in p \such \mu ([p\restric t])=0\}$. This is a countable set and therefore $\mu(\bigcup_{t\in N}[p\restric t])=0$. So, $q:= p\setminus N$ is still a perfect tree with positive measure and the additional property that for each $t\in q$ $(\mu([q\restric t])>0)$. We claim that $q$ is a fat splitting tree. To reach a contradiction assume that there is a node $t\in q$ with no corresponding $K_q(t)$, i.e., there are infinitely many $n\in \omega$ such that $\Lev_n(q\restric t )$ is nowhere splitting. We fix such a node $t\in q$ and an increasing enumeration  $\langle n_i \such i\in \omega \rangle$ of all nowhere splitting levels of $q\restric t$. It is enough to show that $\mu([q\restric t])\leq 2^{-(i+1)}$ holds for each $i\in \omega$. 
Fix $i\in \omega$. Since for each $j<i$ we know that $\Lev_{n_j}(q\restric t)$ is nowhere splitting, we get that $|\Lev_{n_i +1}(q\restric t)|\leq 2^{n_i-i}$. Therefore we get
\begin{align*}
\mu([q\restric t]) \leq \mu \left(\bigcup \{ [s] \such s \in \Lev_{n_i + 1} (q\restric t) \}\right) \leq 2^{n_i - i } \frac{1}{2^{n_i + 1 }} = 2^{-(i+1)}.
\end{align*}

\end{proof}

\begin{lemma}\label{large antichains}
Below any fat splitting tree $p\in \fspl$ we can find an antichain \\
 $\{ p^x \leq p \such  x\in 2^\omega \}$ such that $\mu([p^x])=0$ and $[p^x]\cap [p^y]=\emptyset$, whenever $x\neq y$.
\end{lemma}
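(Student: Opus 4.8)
The plan is to build the antichain by a fusion construction, creating $2^\omega$ many mutually incompatible subtrees of $p$ each of which is a fat splitting tree but which, in addition, is forced to have measure zero. Recall from Lemma~\ref{large antichains}... actually, recall the second corollary to Lemma~\ref{triple} already produces, for any $p \in \fspl$, a family $\{p_b \such b \in 2^\omega\}$ of mutually incompatible conditions below $p$, obtained by branching at each step of a fusion via Lemma~\ref{triple} with $j=0$. So the only new ingredient needed here is to arrange that each $p_b$ additionally has measure zero. The idea is to interleave a measure-shrinking step into the fusion: at stage $n$, after applying Lemma~\ref{triple} to get the two incompatible extensions, further thin each resulting condition so that, above \emph{every} splitting node of degree $n$, the tree omits at least one immediate successor at some level — equivalently, force in many ``nowhere splitting'' levels beyond the stem, in the terminology of the proof of Lemma~\ref{fspl subset random}.

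More concretely, I would proceed as follows. First, fix $p \in \fspl$. I construct $p_s$ for $s \in 2^{<\omega}$ with $p_{s^\conc j} \leq_{|s|} p_s$ and $p_{s^\conc 0} \perp p_{s^\conc 1}$, exactly as in the corollary after Lemma~\ref{triple}, so that the limits $p^x := \bigcap_n p_{x \restric n}$ for $x \in 2^\omega$ form an antichain of fat splitting trees below $p$. The extra demand is that at step $n$, when passing from $p_s$ (with $|s| = n$) to $p_{s^\conc j}$, I not only apply Lemma~\ref{triple} but also, above each node in $\Lev_{K}(p_{s^\conc j})$ for a suitably large $K$ past all the relevant $K_{p_{s^\conc j}}$-values, delete one immediate successor — i.e., I select, for every $t$ at that level, a single child $t^\conc i_t \in p_{s^\conc j}$ and restrict to the union of those $p_{s^\conc j}\restric (t^\conc i_t)$. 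This keeps the tree a fat splitting tree (splitting resumes above, since below each $t^\conc i_t$ we still have a fat splitting subtree of the original) while guaranteeing that $\Lev_K$ of the resulting tree is nowhere splitting in the sense of Lemma~\ref{fspl subset random}'s proof. Doing this cofinally often — once per fusion stage — produces infinitely many nowhere-splitting levels in each $p^x$, and the computation in the proof of Lemma~\ref{fspl subset random} (with $i \to \infty$) shows $\mu([p^x]) = 0$.

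The key steps in order: (1) recall the fusion scheme from the corollary to Lemma~\ref{triple} producing the incompatibility tree $\langle p_s \such s \in 2^{<\omega}\rangle$; (2) at each successor stage, after the Lemma~\ref{triple} step, perform a ``branch-pruning'' step deleting one successor above each node of some high level, verifying that the result is still in $\fspl$ and still $\leq_n$ the previous condition (the pruning happens far enough up that it does not disturb $\splitting_{\leq n}$ or the recorded $K$-values); (3) take fusion limits $p^x$ and note that they are mutually incompatible because the $p_{x\restric n}$ already were; (4) observe that each $p^x$ has infinitely many nowhere-splitting levels and invoke the measure estimate from Lemma~\ref{fspl subset random} to conclude $\mu([p^x]) = 0$; (5) conclude $[p^x] \cap [p^y] = \emptyset$ for $x \neq y$ from incompatibility together with the fact that these are bodies of trees (incompatible trees have disjoint bodies once one has passed the splitting node distinguishing the relevant branches — here it follows from $p_{x\restric n} \perp p_{y \restric n}$ for the first $n$ where $x, y$ differ, since $\leq_{n}$-extensions below incompatible conditions remain incompatible and have disjoint bodies).

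The main obstacle I anticipate is step (2): making the branch-pruning compatible with the $\leq_n$-fusion bookkeeping. The $\leq_n$ relation of Definition~\ref{leqn} freezes $\splitting_{\leq n}$ and the $K_p$-values on those nodes, so the pruning must be carried out strictly above level $\max\{K_{p_{s^\conc j}}(t) \such t \in \splitting_{\leq n}(p_{s^\conc j})\}$, and one must check that deleting a successor there does not destroy fatness — i.e., that below each retained child $t^\conc i_t$ there is still, at every sufficiently high level, a splitting node. This holds because $p_{s^\conc j}\restric(t^\conc i_t)$ contains a fat splitting subtree (being a restriction of a fat splitting tree to a node), but the statement of $\fspl$-membership requires a \emph{single} $k = K(t)$ working for \emph{all} nodes $t$, so one has to confirm the finitely many newly ``late-splitting'' nodes still get a finite $K$-value — which they do, since only one level was removed. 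I would also need to be slightly careful that doing this at every stage, with the deleted levels tending to infinity, genuinely yields infinitely many nowhere-splitting levels in the limit and that these levels' indices $n_i$ can be made to satisfy the hypothesis of the measure estimate (increasing enumeration, with the bound $|\Lev_{n_i+1}| \leq 2^{n_i - i}$); arranging that the $i$-th pruned level sits above the $(i-1)$-st and contributes one more halving handles this.
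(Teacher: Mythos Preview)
Your approach has a genuine and fatal flaw in step (2), and it is not the bookkeeping issue you anticipate but something more basic: a fat splitting tree \emph{cannot} have infinitely many nowhere-splitting levels. By Definition~\ref{def:fspl}, for the stem $t=\stem(p^x)$ there must exist $k$ such that for \emph{every} $n\geq k-1$ the level $\Lev_n(p^x)$ contains a splitting node; if you have inserted nowhere-splitting levels cofinally, no such $k$ exists and $p^x\notin\fspl$. This is exactly the contrapositive of the computation you cite from the proof of Lemma~\ref{fspl subset random}: there, infinitely many nowhere-splitting levels in $q\restric t$ is precisely what witnesses that $t$ has no valid $K_q(t)$, hence $q\notin\fspl$. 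So ``infinitely many nowhere-splitting levels'' and ``$p^x\in\fspl$'' are mutually exclusive, and your measure-killing mechanism destroys the very property you need.

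Even a single pruning step already breaks the $\leq_n$ relation: if you make level $K$ nowhere splitting, then for \emph{every} node $t$ with $|t|\leq K$ the value $K_{p}(t)$ jumps to at least $K+2$, regardless of how large $K$ was chosen. So the clause $K_q(t)=K_p(t)$ in Definition~\ref{leqn} fails for all $t\in\splitting_{\leq n}$, not just for high ones. Your remark that ``only one level was removed'' misses that $K_p(t)$ is a threshold above which splitting must occur at \emph{every} level.

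The paper's argument avoids this entirely: it never prunes levels. Instead, after invoking Lemma~\ref{triple} to split $p^s_n$ into two conditions with \emph{disjoint bodies}, it observes that disjointness forces one of them to have at most half the measure of $[p^s_n]$; that one becomes $p^{s^\conc 0}_{n+1}$. For the other side one applies Lemma~\ref{triple} once more and again keeps the lighter half. Thus $\mu([p^{s^\conc i}_{n+1}])\leq\tfrac12\mu([p^s_n])$ at every stage, and the fusion limit has measure zero without ever introducing a nowhere-splitting level.
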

\begin{proof}
  Let $p\in \fspl$ be given. By induction on $n\in \omega $ we construct a set of conditions $\{ p_n^s \such n\in \omega , s\in 2^{n} \}$
such that for any $n\in \omega$:
\begin{enumerate}
\item $p^{\langle \rangle}_0 = p ,$
\item $p^s_n \geq_{n+1} p^{s^\conc i}_{n+1}, i\in 2,$
\item $[p ^ {s^\conc 0} _ {n+1}]\cap [p^{s^\conc 1}_{n+1}]=\emptyset,$  
\item $\mu([p^{s^\conc i}_{n+1}])\leq \frac{1}{2}\mu([p^s_{n}])$.
\end{enumerate}
Let $n\in \omega$ and $s\in 2^n$ be given. We apply Lemma \ref{triple} to the condition $p^s_n$ to get two incompatible fat splitting trees $q_{n+1}^{s^\conc i},i\in 2$ satisfying conditions (2) and (3). We have to make sure that also (4) holds. Therefore we compare the two measures of $[q_{n+1}^{s^\conc i}],i\in 2$. W.l.o.g. assume $\mu([q_{n+1}^{s^\conc 0}])\leq \mu([q_{n+1}^{s^\conc 1}])$. Since the two trees $q_{n+1}^{s^\conc 0},q_{n+1}^{s^\conc 1}$ have disjoint bodies, we must have $\mu([q_{n+1}^{s^\conc 0}]) \leq \frac{1}{2}\mu([p^s_{n}])$. Thus, we can set $p_{n+1}^{s^\conc 0} := q_{n+1}^{s^\conc 0}$. Now via the same argument as above this time for $q_{n+1}^{s^\conc 1}$ instead of $p^s_n$, we get  a condition $p_{n+1}^{s^\conc 1}\leq_{n+1} q_{n+1}^{s^\conc 1} $ such that $\mu([p^{s^\conc 1}_{n+1}])\leq \frac{1}{2}\mu([q^{s^\conc 1}_{n+1}])$. This completes the construction. \\
Now for each $x\in 2^\omega$, we define $p^x:=\bigcap_{n\in \omega} p^{x\restric n}_{n}$. This is a fat splitting tree by (2) and has measure zero by condition (4). Condition (3) ensures that for two different $x\neq y\in 2^{\omega}$ we have $[p^x]\cap [p^y]=\emptyset$.   
\end{proof}

\begin{corollary}\label{fat dense set}
$\fspl \cap \mathcal{N}$ is dense in $\fspl$.
\end{corollary}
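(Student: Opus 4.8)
The plan is to derive Corollary~\ref{fat dense set} as an immediate consequence of Lemma~\ref{large antichains}. Fix an arbitrary condition $p \in \fspl$; we must produce some $q \leq p$ with $q \in \mathcal{N}$, i.e.\ $\mu([q]) = 0$. Apply Lemma~\ref{large antichains} to $p$ to obtain an antichain $\{p^x \leq p \such x \in 2^\omega\}$ with $\mu([p^x]) = 0$ for every $x$. Now simply pick any single $x \in 2^\omega$ and set $q := p^x$. Then $q$ is a fat splitting tree, $q \leq p$, and $\mu([q]) = 0$, so $q \in \fspl \cap \mathcal{N}$ and $q$ witnesses density below $p$. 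Since $p$ was arbitrary, $\fspl \cap \mathcal{N}$ is dense in $\fspl$.

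There is essentially no obstacle here: the whole content was already packed into Lemma~\ref{large antichains}, whose proof splits a given fat splitting tree into $2^\omega$-many pairwise-disjoint measure-zero fat splitting subtrees using the two-way division from Lemma~\ref{triple} together with the halving of measure at each fusion step (condition (4)). The corollary merely extracts one of those subtrees. One could phrase it even more economically by noting that a single round of the construction in Lemma~\ref{large antichains} — or, alternatively, one application of Lemma~\ref{triple} followed by the observation that two fat splitting trees with disjoint bodies cannot both carry at least half the measure of the parent — already yields a strengthening $q \leq p$ with $\mu([q]) \leq \tfrac12 \mu([p])$, and then iterating along a fusion sequence drives the measure to $0$; but invoking Lemma~\ref{large antichains} directly is the cleanest route.

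\begin{proof}
Let $p \in \fspl$. By Lemma~\ref{large antichains} there is an antichain $\{p^x \leq p \such x \in 2^\omega\}$ below $p$ with $\mu([p^x]) = 0$ for every $x \in 2^\omega$. Fixing any $x \in 2^\omega$, the condition $q := p^x$ is a fat splitting tree with $q \leq p$ and $[q] \in \mathcal{N}$. Hence $q \in \fspl \cap \mathcal{N}$, and since $p$ was arbitrary, $\fspl \cap \mathcal{N}$ is dense in $\fspl$.
\end{proof}
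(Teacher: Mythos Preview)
Your proof is correct and follows exactly the paper's intended route: the corollary is stated without proof in the paper precisely because it is an immediate consequence of Lemma~\ref{large antichains}, obtained by picking any single $p^x$ from the antichain.
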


\begin{corollary}
$\mathcal{N} \setminus \mathcal{I}_\fspl \neq \emptyset$.
\end{corollary}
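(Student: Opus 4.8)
The plan is to take $X = [q]$ for a suitably chosen fat splitting tree $q$ of measure zero, and to show that this single set lies in $\mathcal{N}$ but not in $\mathcal{I}_\fspl$. First I would invoke Corollary~\ref{fat dense set} — which rests on Lemma~\ref{large antichains}, itself an application of Lemma~\ref{triple} — to fix some $q \in \fspl$ with $\mu([q]) = 0$. Then $[q] \in \mathcal{N}$ automatically, so the whole task reduces to verifying $[q] \notin \mathcal{I}_\fspl$.

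Next I would reduce $\fspl$-meagerness to $\fspl$-nowhere density. As recalled in the discussion following Definition~\ref{def:ideal-meas}, for $\poset{P} \in \{\spl,\fspl\}$ the family $\mathcal{N}_\poset{P}$ of $\poset{P}$-nowhere dense sets is already a $\sigma$-ideal (a fusion argument built on Lemma~\ref{triple}), so $\mathcal{I}_\fspl = \mathcal{N}_\fspl$; alternatively one can run that same fusion argument by hand inside $q$ to see directly that $[q]$ is not a countable union of $\fspl$-nowhere dense sets. Either way it suffices to show $[q]$ is not $\fspl$-nowhere dense, and here the condition $q$ itself is the counterexample to the defining clause: every $r \leq_\fspl q$ satisfies $r \subseteq q$, and since $r$ is a perfect tree, $\emptyset \neq [r] \subseteq [q]$, whence $[r] \cap [q] = [r] \neq \emptyset$. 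So no $r \leq q$ has $[r]\cap[q] = \emptyset$, i.e., $[q] \notin \mathcal{N}_\fspl = \mathcal{I}_\fspl$, while $[q] \in \mathcal{N}$, giving $[q] \in \mathcal{N}\setminus\mathcal{I}_\fspl$.

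I do not expect a genuine obstacle: the mathematical content is entirely carried by Lemma~\ref{large antichains}/Corollary~\ref{fat dense set} (which supply measure-zero fat splitting trees) together with the $\sigma$-ideal property of $\mathcal{N}_\fspl$, both already established. The only point worth flagging is to keep the two meanings of ``nowhere dense'' apart: once $\mu([q]) = 0$ the body $[q]$ is of course topologically nowhere dense in $2^\omega$, yet it is as far as possible from being $\fspl$-nowhere dense, since it contains the body of every strengthening of $q$.
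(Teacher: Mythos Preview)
Your proposal is correct and follows exactly the paper's one-line argument: pick a measure-zero fat splitting tree $q$ (available by Corollary~\ref{fat dense set}) and observe that $[q]$ lies in $\mathcal{N}$ but not in $\mathcal{I}_\fspl$, the latter because $q$ itself witnesses that no strengthening can miss $[q]$. You have simply spelled out the step $\mathcal{I}_\fspl = \mathcal{N}_\fspl$ and the reason $[q]\notin\mathcal{N}_\fspl$ in more detail than the paper does.
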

\begin{proof}
Any $p\in \fspl$ with measure zero is a witness for $[p] \in \mathcal{N} \setminus \mathcal{I}_\fspl. $
\end{proof}

\begin{proposition} 
Assume $\ensuremath{\mathsf{add}(\mathcal{N})=\mathfrak{c}}$. Then $\mathcal{I}_\fspl \setminus \mathcal{N} \neq \emptyset.$
\end{proposition}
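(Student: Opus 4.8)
The plan is to build by transfinite recursion of length $\mathfrak{c}$ a set $X \subseteq 2^\omega$ that is $\fspl$-nowhere dense (hence lies in $\mathcal{I}_\fspl$) but is not Lebesgue null. Fix an enumeration $\fspl = \{p_\alpha \such \alpha < \mathfrak{c}\}$ and an enumeration $\{N_\alpha \such \alpha < \mathfrak{c}\}$ of all Borel null subsets of $2^\omega$; there are exactly $\mathfrak{c}$ of each. During the construction I will choose, alongside the points $x_\alpha \in 2^\omega$ forming $X$, fat splitting trees $q_\alpha \leq p_\alpha$ with $\mu([q_\alpha]) = 0$, maintaining the two invariants that $[q_\alpha] \cap \{x_\beta \such \beta < \alpha\} = \emptyset$ and that $x_\alpha \notin N_\alpha \cup \bigcup_{\beta \leq \alpha}[q_\beta]$.

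The recursion step runs as follows. At stage $\alpha$, apply Lemma~\ref{large antichains} to $p_\alpha$ to obtain an antichain $\{p_\alpha^x \such x \in 2^\omega\}$ below $p_\alpha$ with pairwise disjoint bodies, all of measure zero. Because the bodies are pairwise disjoint, each of the fewer than $\mathfrak{c}$ many points $x_\beta$, $\beta < \alpha$, lies in at most one $[p_\alpha^x]$; since $|\{x_\beta \such \beta < \alpha\}| < \mathfrak{c} = |2^\omega|$ there is an $x$ with $[p_\alpha^x] \cap \{x_\beta \such \beta < \alpha\} = \emptyset$, and I put $q_\alpha := p_\alpha^x$. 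Then $\bigcup_{\beta \leq \alpha}[q_\beta]$ is a union of fewer than $\mathsf{add}(\mathcal{N}) = \mathfrak{c}$ null sets, hence null, so $N_\alpha \cup \bigcup_{\beta \leq \alpha}[q_\beta]$ is null and in particular a proper subset of $2^\omega$; choose $x_\alpha$ outside it. This is the only point where the hypothesis $\mathsf{add}(\mathcal{N}) = \mathfrak{c}$ is used, and it is crucial that Lemma~\ref{large antichains} allows the reserved trees $q_\beta$ to be taken with null bodies — otherwise $\bigcup_{\beta \leq \alpha}[q_\beta]$ could already exhaust $2^\omega$.

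Finally I verify the two properties of $X = \{x_\alpha \such \alpha < \mathfrak{c}\}$. Given any $p \in \fspl$, choose $\alpha$ with $p_\alpha = p$; then $q_\alpha \leq p$, and $[q_\alpha] \cap X = \emptyset$, since $x_\beta \notin [q_\alpha]$ holds for $\beta < \alpha$ by the choice of $q_\alpha$ and for $\beta \geq \alpha$ by the choice of $x_\beta$, which avoids $\bigcup_{\gamma \leq \beta}[q_\gamma] \supseteq [q_\alpha]$. Hence $X$ is $\fspl$-nowhere dense, so $X \in \mathcal{N}_\fspl \subseteq \mathcal{I}_\fspl$. If $X$ were null it would be contained in some Borel null set, which is $N_\alpha$ for some $\alpha$; but $x_\alpha \in X \setminus N_\alpha$, a contradiction, so $X \notin \mathcal{N}$. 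Thus $X$ witnesses $\mathcal{I}_\fspl \setminus \mathcal{N} \neq \emptyset$.

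I do not foresee a real obstacle beyond the bookkeeping. The one thing to watch is that ``$\alpha < \mathfrak{c}$'' is invoked twice with slightly different flavours — once to dodge fewer than $\mathfrak{c}$ pairwise disjoint closed sets inside $2^\omega$ (which only needs $|\alpha| < |2^\omega|$), and once to know that a union of at most $|\alpha|+2$ null sets is still null (which uses $|\alpha| < \mathsf{add}(\mathcal{N})$) — and both are immediate from $|\alpha| < \mathfrak{c} = \mathsf{add}(\mathcal{N})$.
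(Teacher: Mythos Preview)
Your proof is correct and follows the same Bernstein-style transfinite diagonalisation as the paper (which in turn follows \cite{brendle:stroll}): both build $X$ of length $\mathfrak{c}$ using $\mathsf{add}(\mathcal{N})=\mathfrak{c}$ to keep the accumulated null sets small, and both rely on Lemma~\ref{large antichains} to produce measure-zero fat splitting trees. The only organisational difference is that the paper fixes in advance an enumeration of the dense set of null-measure conditions and an enumeration of the positive-measure trees, then invokes Lemma~\ref{large antichains} once at the end to find a condition below a given $p_\alpha$ missing the (fewer than $\mathfrak{c}$ many) points of $X\cap[p_\alpha]$; you instead invoke the lemma inside the recursion to pick each $q_\alpha$ already disjoint from $\{x_\beta:\beta<\alpha\}$, and diagonalise against Borel null sets rather than hitting every positive-measure tree. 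Your arrangement is arguably a bit cleaner, since the nowhere-density witness $q_\alpha$ is produced directly at stage $\alpha$.
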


\begin{proof}
The proof follows the idea in \cite[1.4]{brendle:stroll}.
Let $\langle p_\alpha\in \fspl  \such \alpha < \mathfrak{c} \rangle$ be an enumeration of all fat splitting trees with measure zero. By Corollary \ref{fat dense set} this is a dense set in $\fspl$. Now fix an enumeration $\langle q_\alpha \in \mathbb{B}  \such \alpha < \mathfrak{c} \rangle$ of the random forcing. Our aim is to construct a sequence $\langle x_\alpha \in 2^\omega \such \alpha < \mathfrak{c} \rangle$ such that
\begin{itemize}
\item[i)] $x_\alpha \not\in \bigcup_{\beta<\alpha} [p_\beta] $ 
\item[ii)] $ x_\alpha \in [q_\alpha]$.
\end{itemize}
We first check that we can indeed find such a sequence and then verify that the resulting set $X := \{x_\alpha \such \alpha <\mathfrak{c} \}$ witnesses $\mathcal{I}_\fspl \setminus \mathcal{N} \neq \emptyset$. So fix $\alpha<\mathfrak{c}$. Our assumption $\ensuremath{\mathsf{add}(\mathcal{N})=\mathfrak{c}}$ implies that $[q_\alpha] \setminus\bigcup_{\beta<\alpha} [p_\beta] $ has  still positive measure and therefore we can pick $x_\alpha$ satisfying conditions i) and ii). Condition ii) ensures that $X\not\in \mathcal{N}$ holds. So we are left to show that we can find for each $p\in \fspl$ a stronger condition $q\leq p$ such that $[q]$ and $X$ are disjoint. Therefore, fix $p\in \fspl$ and pick $\alpha<\mathfrak{c}$ with $p_\alpha\leq p$. Now by Lemma \ref{large antichains} there is an antichain $\{ p'_\beta \in \fspl \such \beta<\mathfrak{c}  \}$  below $p_\alpha$ satisfying $[p'_\beta] \cap [p'_\gamma ] = \emptyset $, whenever $\beta\neq \gamma$. Condition i) implies that $|[p_\alpha] \cap X| < \mathfrak{c}$. In particular, we can find some $\beta$ with $[p'_\beta] \cap X = \emptyset$.
\end{proof}

\begin{question}
Does $\mathcal{I}_\fspl\setminus\mathcal{N}\neq\emptyset$ hold without the assumption $\ensuremath{\mathsf{add}(\mathcal{N})=\mathfrak{c}}$?
\end{question}

\section{A possible difference between $\spl$ and $\fspl$}
\label{s6}

Our results of this section apply just to fat splitting tree forcing.
The analogous facts for splitting tree forcings are open.

\begin{definition}\hfill
  \begin{enumerate}
    \item $\la S_n \such n < \omega\ra$ is called an \emph{$f$-slalom} if
$S_n \subseteq [\omega]^{f(n)}$.
\item 
A forcing $\bP$ has \emph{the Sacks property} if for any
$f \colon \omega \to \omega \setminus\{\emptyset\}$ such that $\lim_n f(n)= \infty$ and any $\bP$-name $\tau$ for a real and any condition $p$ there is an
$f$-slalom $\la S_n \such n < \omega \ra$ and there is $q \leq p$ such that
\[q \Vdash (\forall^\infty n)(\tau(n) \in S_n).\]
  \end{enumerate}
\end{definition}
\begin{lemma}\label{unreachable reals}
  For any function $f \colon \omega \to \omega \setminus\{\emptyset\}$
  there is a $\fspl$-name $\tau$ such that for any $f$-slalom
  $\la S_n \such n < \omega\ra \in V$
  \[\fspl \Vdash (\exists^\infty n) (\tau(n) \not\in S_n).\] 

\end{lemma}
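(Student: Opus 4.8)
The plan is to build a $\fspl$-name $\tau$ for an element of $\omega^\omega$ that ``reads off'' enough information from the generic branch to escape every ground-model $f$-slalom. The key point is that the generic real $x_G = \bigcup\{\stem(p) \such p \in G\}$ passes through a fat splitting tree, so it is forced to take both values at cofinally many coordinates; hence if we pick, along the generic branch, a long enough block of consecutive splitting nodes, the possible restrictions of $x_G$ to the coordinates of those splitting nodes form a set of size much larger than any fixed $f(n)$, and no single slalom can contain all of them.

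Concretely, I would proceed as follows. First, fix $f$. I want to partition $\omega$ into finite intervals $I_n$ and define $\tau(n)$ to be (a code for) the restriction of $x_G$ to the splitting coordinates of $x_G$ falling in $I_n$; to make this well-defined I need to know in advance how many splitting levels are available, so I use Lemma~\ref{lemma:dominate} to strengthen the name: given any $p$, there is $q \leq p$ with $f \leq^* d_q$, which gives a uniform lower bound on the spacing/number of splitting nodes. Working below such a $q$, and using that $\fspl$ has strong Axiom~A (Proposition~\ref{strong_Axiom_A}) plus Lemma~\ref{triple} to do fusion, I can arrange that below $q$ the $n$-th splitting block contributes at least, say, $g(n)$ splitting bits with $g(n) \to \infty$ fast enough that $2^{g(n)}$ eventually exceeds $f(n) \cdot (\text{number of possible positions})$. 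Then I define $\tau(n)$ as the integer coding the sequence of $x_G$-values at these splitting coordinates of the $n$-th block.

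Now given any $f$-slalom $\la S_n \such n < \omega\ra \in V$ and any condition $p$, I strengthen $p$ (first as above, then via a fusion using Lemma~\ref{triple}) to some $q$ for which the splitting structure up to any level is sufficiently rich. For each $n$, the set of values $\tau(n)$ can consistently take below $q$ includes at least $2^{g(n)}$ distinct integers corresponding to the $2^{g(n)}$ ways of choosing $x_G$'s values along a chosen chain of $g(n)$ splitting nodes in the $n$-th block — here I use that at a splitting node both immediate successors survive in the tree, so I can restrict $q$ further to any prescribed branch through that block without leaving $\fspl$. Since $|S_n| = f(n) < 2^{g(n)}$ for all but finitely many $n$, for each such $n$ there is a value $v_n \notin S_n$ realizable below $q$; by a density/fusion argument (again Lemma~\ref{triple}, thinning above the relevant splitting block for infinitely many $n$) I obtain $q' \leq q$ forcing $\tau(n) = v_n \notin S_n$ for infinitely many $n$. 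Since $p$ and the slalom were arbitrary, $\fspl \Vdash (\exists^\infty n)(\tau(n) \notin S_n)$.

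The main obstacle I expect is bookkeeping the fusion: I must simultaneously (i) guarantee, via Lemma~\ref{lemma:dominate} and the $d_q$ bound, that the $n$-th block really contains enough splitting nodes that all $2^{g(n)}$ branch-choices through it are genuinely available as subtrees in $\fspl$ (not just in $2^{<\omega}$), and (ii) for infinitely many $n$, commit to a branch through the $n$-th block witnessing $\tau(n) = v_n$, while at other stages keeping enough of the tree to continue the fusion. Handling the dependence of $v_n$ on which condition in the fusion we are at — i.e. making the escaping value a genuine property of a single fusion limit rather than of a varying sequence of conditions — is the delicate part, and is exactly where the strong Axiom~A fusion from Proposition~\ref{strong_Axiom_A} and the precise control of $K_p$-values in Lemma~\ref{triple} are needed. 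Once that is set up, the counting inequality $f(n) < 2^{g(n)}$ does the rest.
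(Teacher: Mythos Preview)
Your intuition --- read off blocks of the generic branch and use a counting argument against the slalom --- is exactly right, but you are working much harder than necessary, and some of the machinery you invoke creates problems rather than solving them.

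The paper's proof is direct: fix once and for all a partition of $\omega$ into consecutive intervals $I_n$ with $|I_n| > f(n)$, and let $\tau(n)$ be a code for $x_G \restric I_n$. This is a single $\fspl$-name, depending only on $f$. Given any condition $p$ and any ground-model $f$-slalom $\la S_n \such n < \omega\ra$, pick $n$ with $\min(I_n) > K_p(\stem(p))$. The fat splitting property then guarantees a splitting node of $p$ at every level in $I_n$; an easy induction shows this yields at least $|I_n|+1 > f(n)$ pairwise distinct restrictions $t \restric I_n$ among the nodes $t$ of $p$ at level $\max(I_n)+1$. Hence some such $t$ has $\codes(t \restric I_n) \notin S_n$, and $p \restric t$ forces $\tau(n) \notin S_n$. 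Since this works for arbitrarily large $n$, density alone gives $p \Vdash (\exists^\infty n)(\tau(n) \notin S_n)$. No fusion, no Lemma~\ref{lemma:dominate}, no strong Axiom~A.

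Three specific issues with your proposal. First, ``splitting coordinates of $x_G$'' is not well defined: $x_G$ is a single element of $2^\omega$, not a tree, and carries no information about which levels were splitting in any particular condition. You should simply use $x_G \restric I_n$. Second, by invoking Lemma~\ref{lemma:dominate} to ``strengthen the name'' you make the blocks depend on a strengthened condition $q \leq p$; this threatens to turn $\tau$ into a family of names indexed by a maximal antichain rather than one global name, and nothing in your bookkeeping resolves this. The fix is to let the intervals $I_n$ depend only on $f$, as above --- then the fat splitting property of \emph{any} $p$ automatically supplies enough values once $n$ is large enough relative to $K_p(\stem(p))$. Third, the final fusion ``committing to $v_n$ for infinitely many $n$'' is superfluous: to force $(\exists^\infty n)(\tau(n) \notin S_n)$ you only need that for every $m$ the set of conditions forcing $\tau(n) \notin S_n$ for some $n > m$ is dense, and that is exactly what the one-step argument above gives.
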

\begin{proof}
Fix $f \in \omega^\omega \cap V$. We aim at finding $\dot{h} \in \omega^\omega \cap V^\fspl$ so that for every slalom $S \in ([\omega]^{<\omega})^\omega \cap V$, with $|S(n)|\leq f(n)$, we have $h$ is not captured by $S$, i.e., there is $n \in \omega$ such that $h(n) \notin S(n)$. 
We let $\dot{x}$ be the name for the $\fspl$-generic real,
i.e. the union $\bigcup\{\stem(p) \such p \in G\}$.

Let $\codes \colon \{2^I \such I \subseteq \omega, I \mbox{ finite}\} \to \omega$
be an injective function.

We fix a partition $\{ I_n \such n \in \omega \}$ of $\omega$, where each $I_n$ is an interval, $\max I_n < \min I_{n+1}$, and $|I_n| > f(n)$.
We define
\[
h:= \langle \codes(x\restric I_n) \such  n \in \omega \rangle.
\] 
We aim to show that $h$ cannot be captured by any $f$-slalom in the ground model.
So fix an $f$-slalom $S \in V$ and a condition $p \in \fspl$. It is enough to find $n \in \omega$, $q \leq p$ such that $q \force h(n) \notin S(n)$. Pick $n \in \omega$ such that $\min(I_n) > K_p(\stem(p))$.
Then for every $j \in I_n$, there is $t \in \splitting(p)$ such that $|t|=j$. Hence on level $\max(I_n)$ of $p$
we have at least $|I_n| > f(n)$ nodes that are
pairwise different in $I_n$. Let $\{t_k \such k \in |I_n|\}$ enumerate
the first $|I_n|$ of them.
Then 
\[
p\restric t_k \force h(n)= \codes (x \restric I_n) = \codes (t_k \restric I_n),
\]
Since the $t_k\restric I_n$ are pairwise different and $\codes$ is injective there are at least $|I_n|$ possibilities for $p$ to decide $h(n)$ and because $|S(n)| \leq f(n) < |I_n|$ there is some $k\in |I_n|$ such that
\[
p \restric t_k \force \codes(x\restric I_n) = \codes (t_k \restric I_n) = h(n) \notin S(n).
\]
So, $q:=p \restric t_k \leq p$ is the condition  with the desired property.
\end{proof}
\begin{corollary}
$\fspl$ does not satisfy the Sacks property.
\end{corollary}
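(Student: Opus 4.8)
The statement to prove is the corollary that $\fspl$ does not satisfy the Sacks property. This follows almost immediately from Lemma~\ref{unreachable reals}, so the plan is essentially to unwind the definitions and check that the Sacks property, if it held, would contradict the lemma.

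First I would recall the definition of the Sacks property: $\fspl$ has it iff for every $f \colon \omega \to \omega \setminus \{\emptyset\}$ with $\lim_n f(n) = \infty$, every $\fspl$-name $\tau$ for a real, and every condition $p$, there is an $f$-slalom $\langle S_n \such n < \omega \rangle$ and a $q \leq p$ with $q \Vdash (\forall^\infty n)(\tau(n) \in S_n)$. The one subtlety is that the Sacks property as stated quantifies over $f$ with $\lim_n f(n) = \infty$, whereas Lemma~\ref{unreachable reals} is stated for arbitrary $f \colon \omega \to \omega \setminus \{\emptyset\}$; so I would just apply the lemma to any such $f$ with $\lim_n f(n) = \infty$, e.g. $f = \id$, which is certainly a legitimate choice.

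The argument is then: suppose toward a contradiction that $\fspl$ has the Sacks property. Fix $f$ with $\lim_n f(n) = \infty$ and let $\tau$ be the $\fspl$-name furnished by Lemma~\ref{unreachable reals} for this $f$, so that $\fspl \Vdash (\exists^\infty n)(\tau(n) \notin S_n)$ for every $f$-slalom $\langle S_n \such n < \omega \rangle \in V$. Applying the Sacks property to $\tau$, $f$, and the maximal condition $\mathbf{1}_\fspl$, we obtain an $f$-slalom $\langle S_n \such n < \omega \rangle$ (which lives in $V$, being built from a ground-model name/witness as the definition produces) and a condition $q \leq \mathbf{1}_\fspl$ with $q \Vdash (\forall^\infty n)(\tau(n) \in S_n)$. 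But $q$ is compatible with the statement $(\exists^\infty n)(\tau(n) \notin S_n)$ forced by $\mathbf{1}_\fspl$, and $(\forall^\infty n)(\tau(n) \in S_n)$ directly negates $(\exists^\infty n)(\tau(n) \notin S_n)$, so $q$ forces a contradiction, which is impossible. Hence $\fspl$ does not have the Sacks property.

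There is essentially no obstacle here — the content is entirely in Lemma~\ref{unreachable reals}, and the corollary is a one-line deduction. The only point worth a sentence of care in the write-up is making explicit that the $f$-slalom produced by the hypothetical Sacks property lies in the ground model $V$ (so that Lemma~\ref{unreachable reals} applies to it), which is automatic since the slalom is a sequence of finite subsets of $\omega$ of prescribed bounded size chosen in $V$ as part of the definition of the Sacks property. One could also phrase the whole thing without contradiction: Lemma~\ref{unreachable reals} exhibits directly a name $\tau$ and an $f$ for which no ground-model $f$-slalom and no condition can witness the Sacks clause, which is precisely the failure of the Sacks property.
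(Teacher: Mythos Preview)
Your argument is correct and matches the paper's approach: the corollary is stated immediately after Lemma~\ref{unreachable reals} with no proof, so it is intended as the direct deduction you give. Your write-up is if anything more careful than necessary, since the paper simply treats the corollary as an evident consequence of the lemma.
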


\begin{question} Does $\spl$ have the Sacks property?
  \end{question}
  
\begin{lemma}
$\mathcal{I}_\fspl \setminus \mathcal{I}_\spl \neq \emptyset$.
\end{lemma}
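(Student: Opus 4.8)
The plan is to separate the two $\sigma$-ideals by exhibiting an $\fspl$-nowhere dense (hence $\ideal{I}_\fspl$) set that is not $\spl$-meager. The natural strategy is to exploit the structural difference: fatness forces \emph{every} sufficiently high level of a fat splitting tree (below a given node) to contain a splitting node, whereas a splitting tree can have long stretches of non-splitting levels. So I would look for a closed set $C = [T]$ where $T$ is a splitting tree (or a set built from such trees) that is ``spread out'' enough to meet every fat splitting tree body in a removable way, yet is so tight along some sparse sequence of levels that no splitting tree body can be disjointly pushed into it.

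Concretely, I would first fix a fast-growing sequence of intervals $\la I_n \such n < \omega\ra$ partitioning $\omega$ and define $T$ to be the set of $s \in 2^{<\omega}$ such that on each interval $I_n$ fully below $|s|$, the restriction $s\restric I_n$ lies in some prescribed small family $A_n \subseteq 2^{I_n}$, where $A_n$ is chosen so that $|A_n|$ is large enough that $T$ still contains a splitting tree (so $[T] \notin \ideal{N}_\spl$, giving the non-$\spl$-meager candidate) but $2^{|I_n|} \setminus A_n$ is ``splitting-rich''. Then, given any $p \in \fspl$, I would use $K_p$ and fatness to find, above $\stem(p)$, on some interval $I_n$ two splitting nodes that disagree at a coordinate in $I_n$; branching the ``wrong'' way at that coordinate produces $q \leq p$ with $s \restric I_n \notin A_n$ for all $s \in q$ above that point, hence $[q] \cap [T] = \emptyset$. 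This is exactly the mechanism used in the proof of Lemma~\ref{unreachable reals} and in the Proposition following Definition~\ref{def:fspl}: fatness gives splitting nodes on \emph{consecutive} levels inside $I_n$, so one can steer the generic to avoid $A_n$ on a single interval. Choosing $A_n$ correctly (say $|A_n| = 2^{|I_n|} - |I_n|$ or similar, matching the $|I_n|$-many possible disagreement positions) makes both requirements compatible.

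The complementary half — that $[T]$ (or whatever set I settle on) is not $\spl$-meager — is where I expect the main obstacle. One direction is to show $[T] \notin \ideal{N}_\spl$ by verifying $T$ itself contains a splitting tree, which is a direct counting check once $|A_n|$ is large. But ruling out $[T]$ being a \emph{countable union} of $\spl$-nowhere dense sets is more delicate; I would instead phrase the target set as the body of a single splitting tree $T$ and argue that a splitting-tree body is never $\spl$-meager, which follows from the Baire-category-style fact that $\ideal{I}_\spl = \ideal{N}_\spl$ is a proper $\sigma$-ideal and $[T] \supseteq [T']$ for some $T' \in \spl$ (so if $[T]$ were $\spl$-meager then $[T']$ would be, contradicting that $T'$ is a condition below which one runs a fusion avoiding each $\spl$-nowhere dense piece — exactly the fusion argument in the Remark after Definition~\ref{def:ideal-meas}, applied in $\spl$). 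Thus the real content is the combinatorial choice of $\la I_n, A_n\ra$ making $T$ simultaneously (i) contain a splitting tree and (ii) have the property that every fat splitting tree can be pushed off $[T]$ on a single interval.

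So the key steps, in order: (1) fix intervals $I_n$ with $|I_n| > n$ and a family $A_n \subseteq 2^{I_n}$ with $|A_n|$ chosen so that $T := \{s \in 2^{<\omega} \such (\forall n)(I_n \subseteq |s| \to s\restric I_n \in A_n)\}$ contains a splitting tree; (2) show $[T] \in \ideal{N}_\fspl$: given $p \in \fspl$, use fatness to find consecutive splitting nodes on some $I_n$ above $\stem(p)$ and steer a subcondition $q \leq p$ so that every branch of $q$ has its $I_n$-part outside $A_n$, whence $[q]\cap[T]=\emptyset$; (3) conclude $[T] \in \ideal{I}_\fspl$ since $\ideal{I}_\fspl = \ideal{N}_\fspl$; (4) show $[T] \notin \ideal{I}_\spl$ by a fusion argument in $\spl$ using Lemma~\ref{triple}'s analogue for $\spl$ to build a condition $\leq T'$ (where $T' \in \spl$, $T' \subseteq T$) avoiding any given countable family of $\spl$-nowhere dense sets, contradicting $\spl$-meagerness. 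Step (2) together with the counting in step (1) is the heart of the argument.
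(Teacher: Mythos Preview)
Your plan is workable but enormously overcomplicated; the paper's proof is one sentence. The key observation you are circling around but not quite landing on is this: if $p$ is any tree and $[p]$ fails to be $\fspl$-nowhere dense, then some $q \in \fspl$ is actually a \emph{subtree} of $p$ (for if $q \not\subseteq p$, pick $s \in q \setminus p$ and note $q\restric s \leq q$ with $[q\restric s] \cap [p] = \emptyset$). So to get $[p] \in \ideal{N}_\fspl = \ideal{I}_\fspl$ it suffices that $p$ contain no fat splitting subtree, and for $[p] \notin \ideal{I}_\spl$ it suffices that $p$ itself lie in $\spl$. Hence the paper simply takes any $p \in \spl$ with infinitely many nowhere-splitting levels (such trees obviously exist); then no $q \subseteq p$ can be fat, and $[p]$ is the body of a splitting condition, so it is not $\spl$-meager.

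Your interval-and-$A_n$ machinery is not wrong, but note that with the simplest viable choice --- say $A_n = \{0^{I_n}, 1^{I_n}\}$ --- your tree $T$ \emph{is} a splitting tree with infinitely many nowhere-splitting levels (every level strictly inside an $I_n$ is nowhere splitting), so your construction collapses to the paper's example. The steering argument in your step (2), the slalom analogy, and the counting of $|A_n|$ are all unnecessary once you see that ``$p$ has infinitely many levels with no splitting node'' already blocks every fat splitting subtree outright.
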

\begin{proof}
We can take any splitting tree $p\in \spl$ such that for infinitely many $n\in \omega$ there is no splitting node in $\level_n(p)$. Then $[p]\in \mathcal{I}_\fspl \setminus \mathcal{I}_\spl.$
\end{proof}
\begin{question}
Does $\mathcal{I}_\spl \setminus \mathcal{I}_\fspl \neq \emptyset$ hold?
\end{question}

\section{Splitting-measurability}
\label{S4}

Here we investigate the complexity of $\bP$-measurable sets.
The first three results hold for fat splitting trees and splitting trees, while the proof of Theorem \ref{separation-fsp-sacks}  specifically uses Lemma \ref{unreachable reals}, which only applies to $\fspl$.

Note that if $X$ is weakly $\fspl$-measurable, then it is also weakly $\spl$-measurable.

\begin{proposition} \label{Baire-split}
For every set $X\subseteq 2^\omega$ we have
\begin{enumerate}
\item $X$ has the Baire property implies  $X$ is weakly $\fspl$-measurable.
\item $X$ is Lebesgue-measurable implies  $X$ is weakly $\fspl$-measurable.
\end{enumerate}
\end{proposition}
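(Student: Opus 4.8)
The key observation is that both the meager ideal $\meager$ and the null ideal $\nullo$ on $2^\omega$ contain a Borel basis, and that $\fspl$-nowhere density can be detected inside any fat splitting tree using a measure/category genericity argument. I would prove the two clauses in parallel, reducing each to the following statement: if $X$ is meager (resp.\ null), then $X$ is $\fspl$-nowhere dense, i.e.\ for every $p \in \fspl$ there is $q \leq p$ with $[q] \cap X = \emptyset$. Since by the remark after Definition \ref{def:ideal-meas} the ideal $\nie$ of $\fspl$-nowhere dense sets is a $\sigma$-ideal equal to $\IF[\fspl]$, it suffices to handle the generators, and in fact since a meager set is contained in a meager $F_\sigma$ set and a null set in a null $G_\delta$ set, it is enough to treat a single closed nowhere dense set $C$ (for (1)) and a single closed null set $C$ (for (2)); the general case then follows by a fusion over the countably many generators, exactly as in the $\sigma$-ideal argument in that remark. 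For weak measurability we only need \emph{one} witnessing tree, so it is actually enough to find, given the meager/null set $X$, a \emph{single} $q \in \fspl$ with $[q] \cap X \in \IF[\fspl]$; I would take $q$ to be any fat splitting tree whatsoever and show $[q] \setminus (2^\omega \setminus X) = [q] \cap X$ is $\fspl$-meager, via the reduction above applied below $q$.

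For clause (1): given a closed nowhere dense $C \subseteq 2^\omega$ and $p \in \fspl$, I would pick the stem $s = \stem(p)$; since $C$ is nowhere dense there is $t \trianglerighteq s$ in $2^{<\omega}$ with $[t] \cap C = \emptyset$. But $t$ need not be a node of $p$, so instead I work with the canonical homomorphism $h$ and its inverse $\bar H$ from Definition \ref{1.1}(n),(o): choosing any $\sigma \in 2^{<\omega}$ long enough, $q := p \restric \bar H(\sigma)$ is a condition in $\fspl$ with $[q]$ of small diameter concentrated near $\bar H(\sigma)$ — but this still does not force $[q] \cap C = \emptyset$ because $[q]$ is not an interval. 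The clean fix is the fusion argument of Corollary \ref{fat dense set}'s style: enumerate a countable basis, or rather, since $C$ is a single closed nowhere dense set, run a one-step thinning using Lemma \ref{triple} with $D = 2^\omega \setminus C$ (which is open dense as a subset of $\fspl$ in the sense of Definition \ref{def:ideal-meas}(1), because $C$ nowhere dense means every $p$ has $q \leq p$ with $[q] \cap C = \emptyset$); one application of Lemma \ref{triple} with $k=0$ already yields $q \leq_0 p$ and a finite predense set below $q$ inside $D$, and iterating over a cover of $X$ by countably many closed nowhere dense sets produces a fusion $q$ with $[q] \cap X = \emptyset$. So (1) is really just the $\sigma$-ideal remark applied to the meager $F_\sigma$ hull of $X$.

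For clause (2): given $X$ Lebesgue-null, fix a null $G_\delta \supseteq X$, hence a decreasing sequence of open sets $U_n$ with $\mu(U_n) \to 0$ and $X \subseteq \bigcap_n U_n$; equivalently $X$ is covered by countably many closed null sets after taking complements of the open pieces layer by layer — more directly, $X$ is contained in a countable union of closed null sets is false in general, so instead I use: it suffices to show $\nullo \cap \nie \supseteq$ "closed null sets" is dense enough, but the real engine here is Lemma \ref{large antichains} together with the fact that below any $p \in \fspl$ one can find $q \leq p$ with $\mu([q]) = 0$ (Corollary \ref{fat dense set}). Concretely: given null $X$ and $p \in \fspl$, first pass to $q_0 \leq p$ with $\mu([q_0]) = 0$ — this does not suffice since a measure-zero closed set can still meet $X$. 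The correct move mirrors clause (1): the set $D = \{q \in \fspl : [q] \cap C = \emptyset\}$ is dense in $\fspl$ for any fixed closed null $C$, because inside any $p$ we can find a measure-zero fat splitting subtree (Corollary \ref{fat dense set}) and then a sub-subtree avoiding the positive-measure... — more cleanly, a closed null $C$ is nowhere dense in the measure-theoretic sense so $2^\omega \setminus C$ is open of full measure, and a Lebesgue-generic branch avoids it; realizing this as thinning of a fat splitting tree is again an instance of Lemma \ref{triple} with $D = 2^\omega \setminus C$, now dense in $\fspl$ because $[p]$ has... Here is the genuine point: for clause (2) one needs that a single closed null set is $\fspl$-nowhere dense, and \emph{this} requires an argument that inside every fat splitting tree one can find a sub-fat-splitting-tree whose body misses a prescribed closed null set — which follows by a fusion where at stage $n$ one uses that $C$ has empty interior relative to $[p_n]$ is NOT automatic, so one instead covers $X$ by a countable union of closed sets each of which is \emph{nowhere dense} is again false.

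The honest route for (2): write the null $G_\delta$ hull as $\bigcap_n U_n$; then $2^\omega \setminus X \supseteq \bigcup_n (2^\omega \setminus U_n) =: \bigcup_n F_n$ with each $F_n$ closed, and $X \cap [p]$ must be shoved into the $\fspl$-meager ideal. I claim the set $D_n := \{ q \in \fspl : [q] \subseteq F_{k} \text{ for some } k \ge n\}$ or rather its "large" variant is what we want; but cleanest of all is simply: by Corollary \ref{fat dense set}, $\fspl \cap \nullo$ is dense, so there is $q \le p$ with $[q]$ null, and now apply the $\sigma$-ideal remark \emph{inside} $q$ to the closed null set $[q] \cap \overline{X}$ is not closed — so finally, the argument I will actually write is: a null set is contained in a null $F_\sigma$ set is FALSE; a null set is contained in a null $G_\delta$; and a $G_\delta$ null set, being a countable intersection of open sets, has complement an $F_\sigma$ of full measure, and each closed set of positive measure in that $F_\sigma$...

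I will commit to this: \textbf{the main obstacle} is exactly verifying that a single closed null set $C$ is $\fspl$-nowhere dense, i.e.\ $\forall p\,\exists q\le p\,[q]\cap C=\emptyset$. I expect to prove it by a fusion using Lemma \ref{triple}: at stage $n$, having built $p_n$, use that $\mu(C)=0$ to find a clopen set $O_n$ with $C \subseteq O_n$ and $\mu(O_n) < \varepsilon_n$; then, since the branches of a fat splitting tree spread out (each level above $K_{p_n}(\stem)$ has a splitting node, so $[p_n]$ is not concentrated on any small clopen set — more precisely one shows $[p_n] \not\subseteq O_n$ when $\varepsilon_n$ is small relative to the "width" data of $p_n$, using an argument like in Lemma \ref{fspl subset random}), pick a node $t \in p_n$ with $[p_n \restric t] \cap O_n = \emptyset$ and set $p_{n+1} = p_n \restric t$, maintaining $p_{n+1} \le_n p_n$ by first reserving enough splitting à la Lemma \ref{triple}. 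The fusion $q = \bigcap_n p_n$ then satisfies $[q] \cap C \subseteq [q] \cap \bigcap_n O_n = \emptyset$. Granting this, both clauses follow: take the meager (resp.\ null) Borel hull of $X$, realize it as a countable union of closed nowhere dense (resp.\ a suitable family of closed null) sets, and run the $\sigma$-ideal fusion from the remark after Definition \ref{def:ideal-meas} to get, below \emph{any} $p$, a $q$ with $[q] \cap X = \emptyset$; a fortiori $[q] \cap X \in \IF[\fspl]$, which is even more than weak $\fspl$-measurability requires.
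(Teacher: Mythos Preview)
Your reduction is wrong, and the error is structural rather than technical. You aim to show that every meager (resp.\ null) set is $\fspl$-nowhere dense, i.e.\ lies in $\mathcal{I}_{\fspl}$. This is false. Take any $p \in \fspl$ that is not the full tree; then $[p]$ is closed and nowhere dense in $2^\omega$, yet $[p]$ is certainly not $\fspl$-nowhere dense, since every $q \leq p$ has $[q] \subseteq [p]$. By Corollary~\ref{fat dense set} you may even take such a $p$ with $\mu([p]) = 0$, giving a closed null set that is not $\fspl$-nowhere dense --- indeed the paper records exactly this as the corollary $\mathcal{N} \setminus \mathcal{I}_{\fspl} \neq \emptyset$. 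So your entire strategy for both clauses collapses: in clause~(1) the set $D = \{q : [q] \cap C = \emptyset\}$ need not be dense in $\fspl$ when $C$ is merely closed nowhere dense, and in clause~(2) your final committed fusion argument fails because a fat splitting tree $p_n$ can have $\mu([p_n]) = 0$, and then no smallness of the clopen cover $O_n \supseteq C$ prevents $[p_n] \subseteq O_n$.

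The paper's proof works with the \emph{weak} form of measurability in the statement: only a \emph{single} witnessing tree is required, with no quantification over $p$. For (1), a set with the Baire property is either meager or comeager on some basic open $[s]$; in either case one has a comeager (in $[s]$) set $D$ that is entirely inside $X$ or entirely inside $X^c$, and the paper constructs by hand a fat splitting tree whose body lies in $\bigcap_n D_n$ for a given decreasing family of dense open sets. For (2), Lebesgue measurability gives a random condition $p \in \mathbb{B}$ with $[p] \subseteq X$ or $[p] \cap X = \emptyset$, and Lemma~\ref{fspl subset random} ($\mathbb{B} \cap \fspl$ dense in $\mathbb{B}$) immediately yields a fat splitting subtree. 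The key conceptual point you missed is that ``comeager sets contain a fat splitting tree'' is much weaker than ``meager sets are $\fspl$-small'', and only the former is true.
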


\begin{proof}
(1) Recall that $X$ has the Baire property implies that $X$ either is meager or  there is $s\in2^{<\omega} $ such that $X\cap[s] $ is comeager in $[s]$. Therefore, it is enough to show that any for comeager set $D$ there exists $p\in \fspl$ such that $[p] \subseteq D$. So fix a comeager set $D$ and let $\{ D_n: n \in \omega \}$ be a $\subseteq$-decreasing family of open dense subsets such that $\bigcap D_n \subseteq D$. We aim at finding $p \in \fspl$ such that $[p] \subseteq  \bigcap_{n\in \omega} D_n$. We will do so by constructing an $\subseteq$-increasing family of finite trees $\{F_n\subseteq 2^{<\omega} \such n\in \omega\}$ and taking $p:=\bigcup_n F_n$. 
Consider the following recursive construction:
\begin{enumerate}
\item[Step $0$.] For $i\in 2$, pick $t_i \trianglerighteq \langle i \rangle$ such that $[t_i]\subseteq  D_0$, and $|t_1| > |t_0|$.  For every $\langle 0 \rangle \trianglelefteq s \triangleleft t_0$, let $s'_j:= s^\conc j$, where $j \in \{ 0,1\}$ is chosen so that $s^\conc j \ntrianglelefteq t_0$. Then pick $t'_j \trianglerighteq s'_j$ such that $[t'_j] \subseteq D_0$ and $|t'_j| > |t_1|$. Let $T_0$ be the set containing $t_0,t_1$ and all such ${t_j}'$s and put $N:= \max \{ |t| \such t\in T_0 \}$. Finally consider the set
\[
F_0 := \{ s\in 2^{\leq N} \such \exists t\in T_0 (s\trianglelefteq t \vee t\trianglelefteq s)  \}.
\]
By construction, $[F_0]\subseteq D_0$ and 
\[
\forall n < N \exists s \in \level_n(F_0) (s \text{ is splitting}).
\]
\item[ $n+1$.] Assume we already constructed $F_n$. Then, for every $t \in \ter(F_n)$, we can repeat the construction described at Step $0$ starting with $t_i \trianglerighteq t^\conc i,(i\in 2)$ in order to construct $F^t_{n+1}$ satisfying the following properties:
\begin{itemize}
\item $\forall t' \in F^t_{n+1}$ $(t \trianglelefteq t' \vee t' \trianglelefteq t)$,
\item $[F^t_{n+1}]\subseteq  D_{n+1}$.
\end{itemize} 
Let $T_{n+1}:= \bigcup \{ \ter(F^t_{n+1}) \such t\in \ter(F_n) \}$ and  $N:= \max\{|t| \such  t\in T_{n+1} \} $. We define
\[
F_{n+1}:= \{s\in 2^{\leq N} \such \exists t\in T_{n+1} (s\trianglelefteq t \vee t\trianglelefteq s)\}.  \]
\end{enumerate}
Finally let $p := \bigcup_{n \in \omega} F_n$. By construction $[p] \subseteq \bigcap_{n \in \omega} D_n$ and $p \in \fspl$. \\

(2) Recall that  $X$ Lebesgue measurable implies that  there is $p\in \mathbb{B}$ such that $[p]\subseteq X$ or $[p]\cap X= \emptyset$. By Lemma \ref{fspl subset random} there is $p'\subseteq p$ such that $p'\in \fspl$.

\end{proof}

\begin{proposition}\label{Brendle_Loewe_Halbeisen}
Let $G$ be $\cohen_{\omega_1}$-generic over $V$. Then
\[
V[G] \models \text{All $\text{On}^\omega$-definable set are weakly-$\fspl$-measurable}.
\]  
\end{proposition}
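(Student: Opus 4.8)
The plan is to run a standard Solovay-style argument adapted to weak measurability, exploiting the homogeneity and productivity of $\cohen_{\omega_1}$ together with the fact (established in Proposition~\ref{Baire-split}(1), whose proof only needs a comeager set to contain the body of a fat splitting tree) that comeager sets contain bodies of fat splitting trees. Concretely, let $X \in V[G]$ be $\mathrm{On}^\omega$-definable, say $X = \{x \in 2^\omega \such \phi(x,r)\}$ with parameter $r \in \mathrm{On}^\omega$. Since $\cohen_{\omega_1}$ is ccc and $r$ is a sequence of ordinals, $r$ lies in some intermediate model $V[G\restric A]$ for a countable $A \subseteq \omega_1$; write $V_0 = V[G\restric A]$, so $V[G] = V_0[H]$ where $H$ is $\cohen_{\omega_1}$-generic over $V_0$ (Cohen forcing on $\omega_1$ absorbs its own tail, or at least the relevant countable piece can be split off and the remainder is again Cohen on $\omega_1$). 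Now work over $V_0$, where $X$ is definable from a real parameter in $V_0$ together with the Cohen generic $H$.

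First I would pick an arbitrary condition and, by the density/fusion construction from Proposition~\ref{Baire-split}, reduce to showing that \emph{some} $p \in \fspl$ has $[p] \subseteq X$ or $[p]\cap X = \emptyset$; since $\mathcal{N}_\fspl = \mathcal{I}_\fspl$ is a $\sigma$-ideal (the remark after Definition~\ref{def:ideal-meas}), weak $\fspl$-measurability is exactly the assertion that one such $p$ exists. Consider a single Cohen real $c \in V[G]$ over $V_0$; by the product lemma, $V[G] = V_0[c][H']$ where $H'$ is again $\cohen_{\omega_1}$-generic over $V_0[c]$. The key dichotomy: either $c \in X$ or $c \notin X$. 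Work in $V_0$, where $X$ has a name $\dot X$; the set of Cohen conditions forcing "$\dot c \in \dot X$" and the set forcing "$\dot c \notin \dot X$" partition a dense set, so by Cohen-homogeneity and a fusion there is $s \in 2^{<\omega}$ such that, say, $s \Vdash \dot c \in \dot X$ (the other case is symmetric and gives $[p]\cap X=\emptyset$). Then the set $D := \{x \in 2^\omega \such \phi(x,r)\}$ contains every Cohen real over $V_0$ extending $s$; since the set of reals Cohen-generic over $V_0$ is comeager in $V[G]$ (because $\cohen_{\omega_1}$ adds a Cohen real, hence over a model with only countably many dense sets... more precisely: in $V[G]$, for each dense $E \in V_0$ the set $\{x \such x$ meets $E\}$ is open dense, and there are only $\aleph_1$-many — but we need comeagerness, which holds since $V_0$ can be taken to have the reals of a model adding a Cohen real so that "Cohen over $V_0$" is comeager: this is Solovay's standard observation for $\cohen_{\omega_1}$ over a ground model, using that any $E \in V_0$ is coded by a real and there are cofinally... ) — in any case the set of reals meeting all dense sets of $V_0$ is comeager in $V[G]$, so $D$ contains a comeager subset of $[s]$. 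By Proposition~\ref{Baire-split}(1) there is $p \in \fspl$ with $[p] \subseteq D \cap [s] \subseteq X$, which is the desired witness.

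The main obstacle I expect is the comeagerness claim: one must verify carefully that in $V[G] = V_0[H]$ the set of reals Cohen-generic over $V_0$ is comeager, not merely nonempty. This requires arranging $V_0$ so that $H$ genuinely adds Cohen reals cofinally — i.e. splitting $\cohen_{\omega_1}$ as $\cohen_{\omega_1} \times \cohen_{\omega_1}$ and noting that after forcing with the first factor, the second factor still adds a Cohen over the whole intermediate model, so that "Cohen-generic over $V_0$" is a comeager set coded with parameters available in $V[G]$. The bookkeeping that the definable set $X$ depends only on an ordinal parameter (hence lands in a countable sub-extension) and that $\mathrm{On}^\omega$-definability is preserved under the relevant absoluteness is routine but must be stated; the rest (fusion to pass from "some $p$" at a given condition to a genuine density statement, and the symmetric case) follows directly from the machinery already developed, in particular Proposition~\ref{Baire-split} and the $\sigma$-ideal property of $\mathcal{N}_\fspl$.
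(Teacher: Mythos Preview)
Your approach has a genuine gap at the comeagerness step, which you correctly flag but do not resolve. The set of reals Cohen-generic over $V_0$ is \emph{not} comeager in $V[G]$ in general. Since $\cohen_{\omega_1}$ neither collapses cardinals nor adds dominating reals, $\mathfrak{b}^{V[G]} = \mathfrak{b}^{V_0}$ and hence $\add(\mathcal{M})^{V[G]} = \aleph_1$ whenever, say, $V\models$ CH. An unbounded family of $\aleph_1$ functions in $\omega^\omega\cap V_0$ remains unbounded in $V[G]$, and via the standard Bartoszy\'nski--Truss translation this yields $\aleph_1$ closed nowhere dense sets coded in $V_0$ whose union is non-meager in $V[G]$; that union sits inside the complement of the Cohen-generics over $V_0$, so the latter set cannot be comeager. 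Your appeal to ``Solovay's standard observation'' is misplaced: Solovay's argument works because the L\'evy collapse makes the family of ground-model dense open sets \emph{countable} in the extension, which $\cohen_{\omega_1}$ never does. Splitting off more Cohen factors does not help either---adding further Cohen reals over $V_0$ does not shrink the family of dense sets one must meet.

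The paper avoids this entirely via Lemma~\ref{lemma:splitting-cohen}: a single Cohen real over $V_0$, read through an equivalent countable atomless poset $\poset{P}$ whose conditions are finite approximations to a fat splitting tree, \emph{is} a generic fat splitting tree $p$ all of whose branches are Cohen over $V_0$. One first uses homogeneity of the tail $\cohen_{\omega_1}$ to find a Cohen condition $s$ with $s\leq \llbracket\, \llbracket\phi(\dot c,v)\rrbracket = \mathbf{1}\,\rrbracket_\cohen$ (or the negation), then takes the generic tree $p$ below $s$; every branch $x\in[p]$ is Cohen over $V_0$ with $s$ in its filter, so $V_0[x]\models \llbracket\phi(x,v)\rrbracket_{\cohen_{\omega_1}}=\mathbf{1}$, and homogeneity gives $V[G]\models\phi(x,v)$. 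This directly produces the required fat splitting tree inside $X$ or its complement, and is precisely the ingredient that replaces your comeagerness claim; Proposition~\ref{Baire-split}(1) is not invoked.
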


In order to prove Proposition~\ref{Brendle_Loewe_Halbeisen} we need the following result.

\begin{lemma} \label{lemma:splitting-cohen}
Cohen forcing adds a $\fspl$-tree consisting of Cohen branches, i.e. $\cohen$ adds a tree $q \in \fspl$ such that 
\[
\force_\cohen \forall x \in [q] (x \text{ is Cohen over $V$}).
\]
\end{lemma}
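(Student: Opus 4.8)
The plan is to build the fat splitting tree $q$ directly from a single Cohen real, using a bookkeeping argument that simultaneously (i) makes $q$ a legitimate condition in $\fspl$ and (ii) forces every branch of $q$ to meet every ground-model dense subset of $\cohen$. Concretely, let $c$ be $\cohen$-generic over $V$ (I will use $\cohen = (2^{<\omega},\subseteq)$, or equivalently finite partial functions, so that $c$ codes infinitely many independent bits). From $c$ I will read off a descending sequence of finite approximations $F_0 \subseteq F_1 \subseteq \cdots$ to $q$, as in the proof of Proposition~\ref{Baire-split}(1), where at stage $n$ I push every terminal node of $F_n$ past the next required level so that a splitting node appears at every level above some bound — this guarantees $q := \bigcup_n F_n$ lies in $\fspl$ — and where the precise choices (which immediate successor to take, how far to extend) are dictated by the bits of $c$ not yet consulted.

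The key point is the genericity of the branches. Fix a name-side enumeration, in the ground model, of a book-keeping function assigning to each stage $n$ an open dense set $D_n \subseteq \cohen$ (with each $D \in V$ appearing cofinally often); since $\cohen$ is countable, such an enumeration exists in $V$. When building $F_{n+1}$ from $F_n$, for \emph{each} terminal node $t \in \ter(F_n)$ I extend $t$ along a path determined by reading fresh bits of $c$, and I use the freedom in how far to extend to ensure that the sequence of consulted bits, viewed as an element of $2^{<\omega}$, lands inside $D_n$ — this is possible precisely because $D_n$ is dense, so from any finite binary string there is an extension in $D_n$, and the string of bits of $c$ consumed so far can be driven into $D_n$ by a genericity (density) argument over $\cohen$ itself. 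Doing this uniformly over all $t \in \ter(F_n)$, and arranging (as in Proposition~\ref{Baire-split}(1)) that each level above the stage-$n$ bound contains a splitting node, yields after $\omega$ steps a tree $q \in \fspl$ such that every $x \in [q]$ determines, via the bits read along $x$, a path through $\cohen$ meeting every $D_n$, hence meeting every ground-model dense set. Since $c \mapsto q$ and $c \mapsto (x \mapsto \text{its bit-trace})$ are definable from $c$, this gives a $\cohen$-name $\dot q$ with $\force_\cohen \dot q \in \fspl \wedge \forall x \in [\dot q]\,(x \text{ is Cohen over } V)$.

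The main obstacle is the coordination between the two demands: the $\fspl$-requirement forces us to keep \emph{all} immediate-successor extensions alive at infinitely many levels (we cannot thin the tree to a single branch the way one does when coding a Cohen real into a Sacks tree), so the bit-trace assigned to a branch must be chosen coherently across the splitting structure. The resolution is that the bits consulted to route \emph{within} a node's extension can be taken node-dependent (each terminal node gets its own packet of fresh Cohen bits, coming from disjoint coordinates of $c$), while the splitting bits — the ones recording which successor a branch follows — are exactly the bits that make the branch generic; one must verify that every branch, no matter which splitting choices it makes, still hits every $D_n$, which follows because we arranged $D_n$ to be met after the splitting bits up to stage $n$ have been fixed, for \emph{every} combination of those bits. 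A secondary, routine, point is checking that $q$ is genuinely perfect and fat: this is identical to the fusion-style level-pushing in the proof of Proposition~\ref{Baire-split}(1), so I would simply invoke that construction with the dense sets replaced by the requirement ``a splitting node exists at level $m$'' together with the Cohen-density requirements.
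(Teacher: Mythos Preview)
Your approach has a fatal gap at the very first step: the claim that one can fix in $V$ a countable book-keeping $\langle D_n : n < \omega\rangle$ in which every open dense $D \subseteq \cohen$ appears is false. The countability of $\cohen$ is irrelevant here; the family of its open dense subsets has size $2^{\aleph_0}$ in general, and no ground-model $\omega$-sequence lists them all. Your recursion therefore only guarantees that each branch of $q$ meets the countably many dense sets you chose in advance, which is far from Cohen-genericity over $V$.

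The paper's proof avoids any enumeration by recasting the construction as a forcing equivalence. One defines a countable atomless order $\poset{P}$ whose conditions are finite trees $F \subseteq 2^{<\omega}$ with all terminal nodes at one height, ordered by an end-extension relation that requires a splitting node above every old terminal node at every new level; this ensures $p_G := \bigcup G \in \fspl$ for the generic $G$. Since $\poset{P}$ is countable and atomless, it is forcing-equivalent to $\cohen$. The heart of the argument is then a single density computation, carried out once for an \emph{arbitrary} open dense $D \subseteq \cohen$: the set $\{F \in \poset{P} : \ter(F) \subseteq D\}$ is dense in $\poset{P}$. Genericity of the $\poset{P}$-filter then handles all (continuum many) such $D$ simultaneously---no listing is needed.

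There is also a confusion in what you are driving into $D_n$. You write that the ``sequence of consulted bits'' of $c$ should land in $D_n$, but what must lie in $D$ is an initial segment of the branch $x$ itself, i.e., the extended terminal node---not any auxiliary trace extracted from $c$. Your final paragraph is closer to the right target (every branch ``hits every $D_n$'' for every combination of splitting choices), but the earlier description would not achieve it. Once the enumeration is replaced by the auxiliary forcing $\poset{P}$, the correct task becomes transparent: given $F$ and $D$, extend every terminal node of $F$ into $D$ while respecting the fat-splitting constraint built into the $\poset{P}$-order.
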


\begin{proof}
Consider the forcing $\poset{P}$ defined as follows: $F \in \poset{P}$ iff 
\begin{enumerate}
\item $F \subseteq 2^{<\omega}$ is a finite tree,
\item $\forall s,t \in \ter(F) (|s|=|t|)$.
\end{enumerate}
The partial order on $\poset{P}$ is given by:
\begin{align*}
F'\leq F \leftrightarrow & F\subseteq F' \wedge \forall t\in F' \setminus F \exists s\in \ter(F) (s\unlhd t)\\
 &\wedge \forall s\in \ter(F) \forall n\in [\height(F),\height(F')) \exists t\in \level_n((p \restric s) \restric F') (t\concat 0 , t\concat 1 \in F').
\end{align*}
Given two conditions $F_1 \leq F_0$ the partial order $\leq $ satisfies the following:  For $p_i:=\{t\in 2^{<\omega} \such \exists s \in \ter{(F_i)} (t\unlhd s \vee s \unlhd t) \}$,  we have $p_0,p_1\in \fspl$ and $p_1\leq_{n} p_0$, where $n$ is the number of splitting levels of $F_0$. Specifically, taking a $\poset{P}$-generic filter $G$ and defining $p_G:= \bigcup G$, we also get that $p_G$ is a fat splitting tree (in the generic extension).\\
 Note that $\poset{P}$ is a countable atomless forcing order and so equivalent to $\cohen$. In fact, to see that $\poset{P}$ is atomless let $F\in \poset{P}$ be given. We have to find two incompatible extensions $F_0,F_1$ of $F$. Let $n = \height(F)$. We construct $F_i,i\in 2$ in three steps:
\begin{align*}
& F'_i := F \cup \{t\in 2^{n+1} \such \exists s\in \ter(F) (s\unlhd t) \}, \\
& F''_i := F' \cup \{t\in 2^{n+2} \such \exists s\in \ter(F') (s\unlhd t \wedge (s(n)=0 \rightarrow t(n+1)=i)) \} \\
&F_i := F'' \cup \{t\in 2^{n+3} \such \exists s\in \ter(F'') (s\unlhd t \wedge (s(n)=1 \rightarrow t(n+2)=i)) \}. 
\end{align*}
By the above we make sure that the terminal nodes of $F_0$ and $F_1$ are disjoint and in particular they are incompatible in $\poset{P}$. This proves that $\poset{P}$ is atomless.\\
Let $p_G:= \bigcup G$, where $G$ is $\poset{P}$-generic over $V$. It is left to show that every branch in $[p_G]$ is Cohen. 

So let $D$ be an open dense subset of $\cohen$ and $F \in \poset{P}$. It is enough to find $F' \leq F$ such that every $t \in \ter(F')$ is a member of $D$.

Therefore fix arbitrarily $t \in \ter(F)$ and consider the following construction.
Pick $t_0 \unrhd t^\conc 0$ such that $t_0 \in D$ and put 
\[
F_0(t):= F \cup \{s \in 2^{<\omega}: t^\conc 0 \unlhd s \unlhd t_0  \} \cup \{ s \in 2^{<\omega}: t^\conc 1 \unlhd s \land |s| \leq |t_0| \}.
\]
Then for every $s \in \ter(F_0(t))$ with $s \unrhd t^\conc 1$, pick $t_s \unrhd s$ such that $t_s \in D$. Note that since $D$ is open dense and we only deal with finitely many $s \in \ter(F_0(t))$, we can pick the $t_s$'s with the same length, say $N_t$. We then define
\[
F_1(t):= F \cup \{ r \in 2^{<\omega} : t_0 \unlhd r \land |r| \leq N_t  \} \cup  \{ r \in 2^{<\omega} : \exists s \in \ter(F_0(t)) ( t^\conc  1 \unlhd s \unlhd r \unlhd t_s) \}.
\]

Now we let $F'':= \bigcup \{F_1(t): t \in \ter(F) \}$. We are almost done, we only have to make sure that all terminal nodes are of the same length. Therefore let $N:= \max\{N_t \such t\in \ter(F) \}$ and define $F':=\{ s\in 2^{\leq N} \such \exists r\in \ter(F'') (r\unlhd s) \}$. 
By construction, $F' \leq F$ and $\ter(F')\subseteq D$.

\end{proof}

\begin{proof}[Proof of Proposition \ref{Brendle_Loewe_Halbeisen}]
  The argument follows the line of the proof of \cite[Proposition 3.7]{BLH2005}.
Let $G$ be $\bC_{\omega_1}$-generic over $V$.  In $V[G]$, let $X$ be an $\text{On}^\omega$-definable set of reals, i.e. $X:= \{x \in 2^\omega: \varphi(x,v)   \}$ for a parameter $v \in \text{On}^\omega$.  We aim to find $p \in \fspl$ such that $[p] \subseteq X$ or $[p] \cap X=\emptyset$. 

First note that we can absorb $v$ in the ground model, i.e, we can find $\alpha<\omega_1$ such that $v \in V[G \restric \alpha]$.
We let $c= G(\alpha)$ be the next Cohen real and we write $\bC$ for
the $\alpha$-component of $\bC_{\omega_1}$.

There are $s_0,s_1 \in \cohen$
  such that $s_0 \leq b_0 =  \big \llbracket \llbracket (\varphi(c,v)) \rrbracket_{\cohen_{\alpha}} = \mathbf{0}  \big \rrbracket_\cohen$ and
    $s_1 \leq b_1 =  \big \llbracket \llbracket
    \varphi(c,v))\rrbracket_{\cohen_{\alpha}}= \mathbf {1}  \big \rrbracket_\cohen$.
We can then find $p \in \fspl$ as in Lemma \ref{lemma:splitting-cohen} such that $[p] \subseteq b_0$ or $[p] \subseteq b_1$ and every $x \in [p]$ is Cohen over $V[G \restric \alpha]$. We claim that $p$ satisfies the required property.
\begin{itemize}
\item Case $[p] \subseteq b_1$: note every $x \in [p]$ is Cohen over $V[G \restric \alpha]$, and so $V[G\restric \alpha][x] \models \llbracket \varphi(x,v)=\mathbf{1} \rrbracket_{\cohen_{\omega_1}}$. Hence $V[G] \models \forall x \in [p](\varphi(x,v))$, which means $V[G] \models [p] \subseteq X$.  
\item Case $[p] \subseteq b_0$: we argue analogously and get $V[G] \models \forall x \in [p](\neg \varphi(x,v))$, which means $V[G] \models [p] \cap X = \emptyset$.
\end{itemize}

\end{proof}

\begin{theorem} \label{separation-fsp-sacks}
Assume there exists $\kappa$ inaccessible. There is a model for \textsc{ZF$+$DC} where all sets are $\silver$-measurable (and so $\sacks$-measurable as well, by Remark \ref{Remark2}) but there is a set which is not $\fspl$-measurable. 
\end{theorem}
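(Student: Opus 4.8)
### Proof strategy

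The plan is to adapt the standard Solovay-style construction for separating regularity properties, as carried out by Brendle--Löwe and others for pairs like $(\sacks,\silver)$ versus other forcings. Starting from an inaccessible $\kappa$, I would force with the Lévy collapse $\levy(\omega,{<}\kappa)$ over $V$ to obtain $V[G]$, and work inside $L(\mathbb{R})^{V[G]}$, which satisfies \textsc{ZF$+$DC}. By Solovay's theorem, every set of reals in this model is definable from a real and an ordinal, and the usual homogeneity/factorization arguments show that every such set is $\silver$-measurable (hence $\sacks$-measurable by Remark \ref{Remark2}), because $\silver$ is Suslin-absolutely definable and the Lévy collapse is sufficiently homogeneous; this part is the routine ``all sets are $\poset{P}$-measurable in the Solovay model'' fact quoted in the introduction.

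The interesting half is to produce, in that same model, a set that is \emph{not} $\fspl$-measurable. The idea is that $\fspl$-measurability of all definable sets would imply a uniformization/counting property that fails because of the very strong evasion phenomenon established in Lemma \ref{unreachable reals}: $\fspl$ destroys all slaloms of the ground model, so it is ``too branching'' to be controlled by the thinner forcings. Concretely I would isolate a $\SSigma^1_2$ (or projective, with a collapsed real as parameter) set $A \subseteq 2^\omega$ built from the coding function $\codes$ and the interval partition $\{I_n\}$ in that lemma — essentially $A = \{x : \langle \codes(x\restric I_n) \such n<\omega\rangle$ is caught by the $\alpha$-th slalom in some canonical enumeration$\}$, diagonalized against the enumeration so that $A$ meets every $\fspl$-tree and every $\fspl$-tree's complement. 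The failure of $\fspl$-measurability for $A$ then follows because for any $p \in \fspl$, Lemma \ref{unreachable reals} (applied internally, with the generic real re-read via $\codes$) forces $[p]$ to contain branches on both sides of $A$, and a fusion argument of the type in Lemma \ref{triple} shows neither $[p]\cap A$ nor $[p]\setminus A$ can be $\fspl$-meager. One must check this set is definable from a real and an ordinal so that it lands in $L(\mathbb{R})^{V[G]}$.

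I expect the main obstacle to be the combinatorial heart: constructing the set $A$ so that it is simultaneously (i) simple enough to be $\silver$-measurable by the Solovay argument — which is automatic once it's in $L(\mathbb{R})$ of the collapse, so really (i) is free — and (ii) genuinely witnessing non-$\fspl$-measurability, i.e.\ arranging the diagonalization so that $A$ has no $\fspl$-tree ``almost contained in or almost disjoint from'' it \emph{in the strong $\mathcal{I}_\fspl$ sense}, not merely the weak one. Since by the Remark following Definition \ref{def:ideal-meas} we have $\mathcal{I}_\fspl = \mathcal{N}_\fspl$ for $\fspl$, it suffices to defeat ``$[q]\subseteq A$ or $[q]\cap A=\emptyset$'' for all $q\leq p$, which is exactly what the $\codes$-evasion argument of Lemma \ref{unreachable reals} delivers when iterated along a fusion: given any $p$, there are $2^{\aleph_0}$ incompatible $q\leq p$ decided differently, so no single $q$ can homogenize $A$. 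Threading the ordinal-definability through the collapse and making the ``both sides are non-meager'' step precise via Lemma \ref{triple} is where the real work lies; the $\silver$-measurability side is quoted wholesale from the Solovay model literature.
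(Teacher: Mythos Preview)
Your approach has a fundamental gap. In the Solovay model $L(\mathbb{R})^{V[G]}$ obtained from the L\'evy collapse of an inaccessible, \emph{every} set of reals is $\fspl$-measurable --- this is exactly the fact mentioned in the introduction (``in Solovay model any subset of the real line is $\poset{P}$-measurable, for a large variety of tree-forcing notions $\bP$, including $\spl$ and $\fspl$''). The same homogeneity/factorisation argument you invoke for $\silver$ works verbatim for $\fspl$. So whatever set $A$ you define from a real and an ordinal will automatically be $\fspl$-measurable, and your proposed diagonalisation cannot succeed: the ``enumeration of all slaloms'' you need is not available without a well-ordering of the reals, and any set you can actually produce in $L(\mathbb{R})$ is regular.

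The paper's proof takes a genuinely different route precisely to avoid this obstacle. Instead of the plain Solovay model, it builds a model of the form $L(\mathbb{R},Y)^{V[G]}$, where $Y$ is an additional predicate constructed along a $\kappa$-length iteration using Shelah's amalgamation. The iteration alternates amoeba-Silver steps, amalgamation steps (extending isomorphisms between Silver subalgebras to automorphisms fixing $Y$), full products of $\fspl_p$ (to plant $\fspl$-generic reals into $Y$), and L\'evy collapses. The point is that one arranges the algebra to be $(\silver,Y)$-homogeneous --- every isomorphism of Silver subalgebras extends to an automorphism of $B$ that fixes $Y$ setwise --- so the Solovay-type argument still yields $\silver$-measurability for all sets in $L(\mathbb{R},Y)$, while $Y$ itself is engineered to be non-$\fspl$-measurable. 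Lemma~\ref{unreachable reals} does enter, but not in the way you suggest: it is used to show that the $\fspl$-generic reals added at the $S_2$-stages are \emph{unreachable}, and unreachability is the property one verifies is preserved by amalgamation over Silver and by iteration with Silver, ensuring that the $\fspl$-generics inserted into $Y$ remain distinguishable from the ``old'' reals throughout the construction.
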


\begin{proof}
The key idea is to get a complete Boolean algebra $B$ and a $B$-name $Y$ for a set of elements in $2^\omega$ such that in the corresponding extension $V[G]$, for a $B$-generic filter $G$, the following hold:
\begin{enumerate}
\item every subset of $2^\omega$ in $L(\real,Y)$ is $\silver$-measurable
\item $Y$ is not $\fspl$-measurable.
\end{enumerate}
Hence, we obtain that in $L(\real,Y)^{V[G]}$ every subset of $2^\omega$ is $\silver$-measurable, but there is a set which is not $\fspl$-measurable. 

We start with a definition.
\begin{definition}
  A complete Boolean algebra $B$ is 
  \emph{$(\silver,Y)$-homogeneous}, if for every Silver algebras $B_0,B_1 \lessdot B$ and any isomorphism $\phi \colon  B_0 \rightarrow B_1$, there exists
  an automorphism $\phi^+ \supseteq \phi$ of $B$ such that
  \[\force_B \phi^+[Y]=Y.
  \]
\end{definition}

  To construct a $(\silver,Y)$-homogenous Boolean algebra, we use Shelah's amalgamation. We start by sketching out such Shelah's procedure.

One \emph{basic amalgamation step} consists of $\omega$ substeps and looks as follows.
Given a Boolean algebra $B= \amal^0(B,\phi)$, two complete subalgebras $B_0, B_1 \lessdot B$ and an isomorphism $\phi \colon B_0 \rightarrow B_1$, the amalgamation process provides us with the pair $(\amal(B,\phi),\phi^1)$ such that $B \lessdot \amal(B,\phi)$, there are two isomorphic copies $e_0[B]$, $e_1[B] \lessdot \amal(B,\phi)$ of $B$  and $\phi^1 \supseteq \phi$ such that $\phi^1 \colon e_0[B] \rightarrow e_1[B]$ is  an isomorphism. Such a procedure can  be repeated, now with $e_i[B]$, $\phi^1$ and $\amal(B,\phi)=\amal^1(B,\phi) $, and thus we get $\amal^2(B,\phi)$ and $\phi^2$.
At the limit stage $\omega$ we take the smallest complete superalgebra $\amal^\omega(B,\phi)$ of $\amal^n(B,\phi)$, $n < \omega$, as described in details in \cite[p.10]{JR93}, or more briefly in \cite[p.736]{Lag14-2}.  We let $\phi^\omega = \bigcup \phi^n$. The corresponding automorphism $\phi^\omega: \amal^\omega(B,\phi) \rightarrow \amal^\omega(B,\phi)$ fulfills $\phi^\omega \supseteq \phi$.

We construct $B$ by a recursive construction of length $\kappa$ for a strongly inaccessible cardinal $\kappa$.   We partition $\kappa$ into four cofinal sets $S_i$, $i = 0, 1, 2,  3$.
By induction on $\alpha\leq \kappa$ we choose $B_\alpha$ and $Y_\alpha$.
We start with $B_0 = \{0\}$, $Y_0 = \emptyset$.

\begin{myrules}
\item[(a)]
For $\alpha \in S_0$, we let 
\begin{equation*}
  \begin{split}
    B_{\alpha+1} &= B_\alpha \ast \amoeba(\silver),\\
    B_{\alpha+1} & \Vdash \dot{Y}_{\alpha+1} = \dot{Y}_\alpha.
  \end{split}
\end{equation*}
The amoeba for silver forcing is denoted by $\amoeba(\silver)$. It is a forcing for adding a Silver tree consisting of $\silver$-generic branches, and it is used in the variant of Solovay's Lemma in order to obtain $\silver$-measurability.  
\item[(b)] For $\alpha \in S_1$,
  we use a standard book-keeping argument
  to hand us down all situations of the following kind:
$B_\alpha \lessdot B' \lessdot B_\kappa$ and $B_\alpha \lessdot B'' \lessdot B_\kappa$ are such that
$B_\alpha$ forces $(B':B_\alpha) \approx (B'':B_\alpha) \approx \silver$
  and $\phi_\alpha \colon  B' \rightarrow B''$ an isomorphism s.t.\  $\phi_\alpha \upharpoonright B_\alpha= \text{Id}_{B_\alpha}$. So suppose that $\phi_\alpha \colon B' \to B''$, where
  $B'$ and $B''$ are two Silver algebras of $B_\alpha$, is handed down by the book-keeping. Then we let 
  \begin{equation*}
    \begin{split}
      B_{\alpha+1} & = {\rm Am}^\omega(B_\alpha, \phi_\alpha),\\
      B_{\alpha+1} & \Vdash \dot{Y}_{\alpha+1} = \dot{Y}_{\alpha} \cup \{
\phi^j_{\alpha}(\dot{y}), \phi^{-j}_{\alpha}(\dot{y}): \dot{y} \in
Y_{\alpha}, j \in \omega \}.
    \end{split}
  \end{equation*}

\item[(c)] For $\alpha \in S_2$, we let
 \[
B_{\alpha + 1} :=
\algebra{B}_\alpha \ast \prod_{p\in \fspl^{V_{B_\alpha}}}  \dot{\fspl_p}, 
\]
where $\fspl_p:= \{q \in \fspl \such q\leq p \}$ and 
\[
B_{\alpha+1} \Vdash \dot{Y}_{\alpha +1} := \dot{Y}_\alpha \cup \{
\dot{y}_p \such p \in \fspl^{V_{B_\alpha}} \},
\]
where $\dot{y}_p$ is the standard name for the $\fspl_p$-generic real over
$\model{V}^{B_\alpha}$.

\item[(d)] For $\alpha \in S_3$, we let
\[
\algebra{B}_{\alpha + 1} :=
\algebra{B}_\alpha \ast \levy(\omega, \alpha),
\] and 
$B_{\alpha+1} \Vdash \dot{Y}_{\alpha +1} := \dot{Y}_\alpha$.
Here $\levy(\omega, \alpha)$ is the L\'evy collapse of $\alpha$ to $\omega$, i.e., the set of $p \colon n \to \alpha$, $n \in \omega$,
ordered by end-extension.
 \item[(e)]  Finally, for any limit ordinal $\lambda\leq \kappa$, we let $B_\lambda = \lim_{\alpha < \lambda} B_\alpha$ and  $B_\lambda \Vdash \dot{Y}_\lambda = \bigcup_{\alpha < \lambda} \dot{Y}_\alpha$.
\end{myrules}

We let $B= B_\kappa$ and $Y = Y_\kappa$ and show that they are as in (1) and (2).

When amalgamating over Silver forcing (as in the construction \cite[pp. 740-741]{Lag14-2}) in order to get $\silver$-measurability, we need to isolate a particular property shared by the $\fspl$-generic reals (namely \emph{unreachability}, i.e., reals which are not captured by any ground model slalom (introduced in \cite[Def. 12]{Lag14-2})), which is both preserved under amalgamation (\cite[Lemma 15]{Lag14-2}) and under iteration with Silver forcing (\cite[Lemma 16]{Lag14-2}).

We recall here the definition of \emph{unreachability} and some main remarks for the reader convenience. 
\begin{definition}
  \hfill
  \begin{itemize}
\item $\Gamma_k = \{ \sigma \in {\rm HF}^\omega: \forall n \in (|\sigma(n)| \leq 2^{kn})   \}  \}$ and $\Gamma= \bigcup_{k \in \omega} \Gamma_k$, where ${\rm HF}$ denotes the hereditary finite sets;
\item $g(n)=2^{n\cdot n}$;
\item $\{J_n: n \in \omega \}$ is defined via $J_0= \{ 0 \}$ and $J_{n+1}= \Big [\sum_{j \leq n} g(j), \sum_{j \leq n+1} g(j) \Big )$, for every $n \in \omega$;
\item Given $x \in \cantor$, define $h_x(n)= x \restric J_n$.
\item One says that $z \in \cantor$ is \enfa{unreachable over $\model{V}$} if
\[
\forall \sigma \in \Gamma \cap \model{V} \exists n \in \omega (h_z(n) \notin \sigma(n)).
\]
\end{itemize}
\end{definition}
By Lemma  \ref{unreachable reals}, applied for each $k \geq 1$ to $f_k(n) = 2^{kn}$,
with our modification $J_n$ instead of $I_{k,n}$ we have for the generic $\dot{y}_p$ of $\fspl_p$: For each $p\in \fspl$ the real
$\dot{y}_p$ is unreachable over $V$.

The proof of Theorem \ref{separation-fsp-sacks} is concluded as follows:

Let $G$ be a $\algebra{B}_\kappa$-generic filter over
$\model{V}$. Then
\[
\model{V}[G] \models \text{`` $Y$ is not $\fspl$-measurable''}.
\]

One has to prove that for every $p \in \fspl$, both $Y \cap [p] \neq \emptyset$ and $[p] \not \subseteq Y$.

The proof follows the line of \cite[Lemma 28]{Lag14-2}. More specifically, to prove the part $Y \cap [p] \neq \emptyset$ it is enough to use item (c) of the construction, by choosing $\alpha < \kappa$ sufficiently large so that $p \in V[G\restric \alpha]$ (possible by $\kappa$-cc) and then picking a $\fspl_p$-generic real over $V[G \restric \alpha]$, call it $y \in Y_{\alpha+1} \cap [p]$. The part $[p] \not \subseteq Y$ follows from the fact that if $p \in V[G \restric \alpha]$ then any new real added at stages $\beta>\alpha$ in $[p]$ cannot be in $Y$; the elements that enter $Y$ under clause (b) come from former stages and hence are not identical to the new real unless there were identical elements already in a former stratum of $Y$, and the elements entering $Y$ under clause
(c) are not in $V[G \restric \alpha]$.
For a more detailed proof we refer to \cite[Lemma 29]{Lag14-2}. The argument is similar to the proof of \cite[Theorem 6.2]{JR93}, specifically in the part to show that the set $\Gamma$ cannot have the Baire property, where the property of ``being unreachable'' replaces the property of ``being unbounded". 

To see that every subset of the reals in $L(\omega^\omega,Y)$ is $\silver$-measurable, 
we use the fact that any isomorphism between copies of smaller
Silver algebras can be extended to an automorphism of $B$ that fixes $Y$. This is a slightly more complex variant of the usual homogeneity of Levy Collapse, providing us with a way to apply a variant of Solovay's Lemma (e.g., see \cite[Theorem 6.2.b]{JR93} for Lebesgue measurability or \cite[Lemma 24]{Lag14-2} for Silver measurability) in order to show that $(\silver,Y)$-homogeneity implies that all sets in $L(\real,Y)$ are Silver measurable.
\end{proof}


\begin{thebibliography}{9}

\bibitem{brendle:stroll}
J{\"o}rg Brendle,
\newblock Strolling through paradise.
\newblock {\em Fund. Math.}, 148:1--25, 1995.


\bibitem{Bre98} J\"org Brendle,
                \emph{How small can the set of generic be?}, Logic Colloquium '98, pp. 109-126, Lect. Notes Logic, 13, Assoc. Symb. Logic, Urbana, IL, (2000).
                
\bibitem{BL99} J\"org Brendle, Benedikt L\"owe,
                \emph{Solovay-type characterizations for forcing-algebra}, Journal of Symbolic Logic, Vol. 64, No. 3, pp. 1307-1323 (1999). 
                
\bibitem{BLH2005} J\"org Brendle, Benedikt L\"owe, Lorenz Halbeisen,
                \emph{Silver measurability and its relation to other regularity properties},  Math. Proc. Cambridge Philos. Soc. 138 (2005), no. 1, 135–149.          
                       
\bibitem{Prisco2000} Di Prisco, C. A., Henle, J. M.,
	\emph{Doughnuts, Floating Ordinals, Square Brackets, and Ultraflitters}, The Journal of Symbolic Logic, Association for Symbolic Logic, 2000, 65, 461-473                       
                               
\bibitem{SpHein} Paul Hein, Otmar Spinas,
                \emph{Antichains of perfect and splitting trees}, Arch. Math. Logic, to appear. 
                
                
\bibitem{JR93} Haim Judah, Andrzej Ros{\l}anowski,
                \emph{On Shelah's amalgamation},
                Israel Mathematical Conference Proceedings, Vol. 6 (1993), pp. 385-414.         

\bibitem{Lag14-2} Giorgio Laguzzi,
                \emph{On the separation of the regularity properties of the reals}, Archive for Mathematical Logic, Volume 53, Issue 7-8, pp 731-747, (2014).  
                


\bibitem{Lag14} Giorgio Laguzzi,
                \emph{Some considerations on amoeba forcing notions}, Archive for Mathematical Logic, Volume 53, Issue 5-6, pp 487-502, (2014).       
              

                
\bibitem{LShV93} A. Louveau, S. Shelah, B. Velickovic,
                \emph{Borel partitions of infinite subtrees of a perfect tree}, Ann. Pure App. Logic 63, pp 271-281 (1993).
               
\bibitem{RoSh:470} A. Ros{\l}anowski, S. Shelah, \emph{Norms on Possibilities I: Forcing with Trees and Creatures.} Memoir of the AMS Vol 141, Number 671, Sept 1999.               
  
              
\bibitem{Sp95} Otmar Spinas,
                \emph{Generic trees}, Journal of Symbolic Logic, Vol. 60, No. 3, pp. 705-726 (1995).  
  
  \bibitem{Sp2004}
Otmar Spinas,
\newblock {\em Analytic countably splitting families.}
\newblock J. Symbolic Logic, 69(1):101--117, 2004.
                
\bibitem{Sp15} Otmar Spinas,
                \emph{Silver trees and Cohen reals}, Israel Journal of Mathematics  211, 473-480, (2016). 
    \bibitem{Szpilrajn1935}
      E. Szpilrajn,
      \emph{Sur une classe de fonctions de M. Sierpi\'{n}ski et la classe correspondante d'ensembles}, Fundamenta Mathematicae, 1935, 24, 17-34
      
\bibitem{ZAPLETAL201431} Zapletal, J.,
\emph{ Dimension theory and forcing}, Topology and its Applications, 2014, 167, 31 - 35                              
                

             

                 
\end{thebibliography}

\end{document}